\documentclass[12pt]{article}
\usepackage{makeidx}
\usepackage{amssymb,amsthm,amsmath,amsfonts,enumerate}
\usepackage{graphicx,subcaption,float,relsize}
\usepackage{url}
\usepackage{hyperref}
\usepackage{rotating}
\usepackage[longnamesfirst]{natbib}
\usepackage{array}
\usepackage[nodisplayskipstretch]{setspace}
\usepackage{lipsum}
\usepackage{caption}
\usepackage{indentfirst}

\hypersetup{
  colorlinks = true, urlcolor = cyan, linkcolor = blue, citecolor = red }
\renewenvironment{abstract}
 {\small
  \begin{center}
  \bfseries \abstractname\vspace{-.0em}\vspace{0pt}
  \end{center}
  \list{}{    \setlength{\leftmargin}{0mm}
    \setlength{\rightmargin}{\leftmargin}  }  \item\relax}
 {\endlist}
\makeatletter
\def\maketag@@@#1{\hbox{\m@th\normalfont\normalsize#1}}
\makeatother

\DeclareMathOperator{\argmax}{argmax}

\allowdisplaybreaks
\newtheorem {theorem}{Theorem}[section]
\newtheorem {assumption}{Assumption}

\newtheorem{lemma}[theorem]{Lemma}

\newtheorem{remark}{Remark}[section]

\allowdisplaybreaks

\begin{document}

\title{A High Dimensional Wild Bootstrap Max-Test for Detecting the Presence
of Significant Predictors}
\author{Jonathan B. Hill\thanks{%
Department of Economics, University of North Carolina, Chapel Hill, North
Carolina, E-mail:\texttt{jbhill@email.unc.edu}; \texttt{\href{https://tarheels.live/jbhill/}%
{https://tarheels.live/jbhill/}}. We kindly acknowledge Min Qian for
providing the R code developed for the ART procedure in \cite%
{McKeague_Qian_2015}.}\medskip \\
Dept. of Economics, University of North Carolina, Chapel Hill}
\date{{\large This draft:} \today
}
\maketitle

\begin{abstract}
We construct a block bootstrap max-test for detecting the presence of
significant predictors in a high dimensional setting, allowing for weakly
dependent and heterogeneous (possibly non-stationary) data. The number of
covariates to be screened may be large $p$ $>>$ $n$, and growing at an
exponential rate, provided $\ln (p)$ $=$ $o(n^{a})$ for some $a$ $>$ $0$
that depends on memory decay and the growth of higher moments. We study the
problem of correlation screening in a high dimensional marginal regression
setting, assuming so-called \textit{physical dependence} in a time series
setting. We entirely sidestep covariance matrix estimation and adaptive
re-sampling by working with a max-statistic over the many computed
parameters. Thus we do not need endogenous selection of the most relevant
predictor index yielding non-uniform asymptotics, nor do we need a
post-estimation Bonferroni correction. The non-standard limit distribution
arising from the maximum of an increasing number of estimators is easily
approximated by a multiplier (wild) block bootstrap. The max-test controls
for size well, performs well against various deviations from the null,
including very slight deviations with a weak or sparse signal. A numerical
experiment is performed and an empirical example with the VIX volatility
index is provided.\medskip \newline
\textbf{Key words and phrases}: correlation learning, marginal screening,
high dimension, physical dependence.\medskip \newline
\textbf{MSC classifications} : 62F07, 62H15, 62G10.
\end{abstract}

\setstretch{1.4}

\section{Introduction\label{sec:intro}}

The selection of statistically significant predictors and subsequent
inference is a central challenge in applied sciences, while recent advances
in high dimensional settings pose new challenges \citep[e.g.][]{FanLi2006}.
High dimensional data sets are now common due to the amount of data
available, the sophisticated nature of data collection methods, and the use
of machine learning and related nonparametric methods for estimation of
unknown link functions. This covers many scientific fields, including
economics, finance, engineering, genetics, communication and meteorology,
and other material sciences %
\citep[e.g.][]{BuhlmannVanDeGeer2011,FanLvQi2011,BelloniChernozhukovHansen2014}%
.

Historically multiple testing and related post-estimation p-value bounding
are exploited %
\citep[e.g.][]{BenjaminiHochberg1995,Efron2006,McCloskey2017,McCloskey2020,McCloskey2024}%
. Conversely, \cite{McKeague_Qian_2015} propose a single test for detecting
the most significant predictor (if there is one), while controlling for the 
\textit{family-wise error rate} [\textit{fwer}]. They work in an iid
setting, with methods ultimately akin to those developed in %
\citet{Andrews_Cheng_2012,AndrewsCheng2013,AndrewsCheng2014} and \cite%
{Cheng2015} for weakly identified models, and while the number of covariates 
$p$ $>$ $n$ is allowed, where $n$ is the sample size, $p$ is otherwise 
\textit{fixed}, thus $p/n\rightarrow 0$ yielding eventually a low dimension
setting. Ostensibly a post estimation framework with \textit{fwer} control
in a true high dimensional setting $p/n$ $\rightarrow $ $\infty $ is not yet
available. Related work for independent data with fixed $p$ $>$ $n$ includes
a self-standardized version of the ART based on a parametric bootstrap with
McKeague and Qian's (\citeyear{McKeague_Qian_2015}) limit theory %
\citep{ZhangLaber2015}, linear conditional mean models %
\citep{TangWangBarut2018}, and survival models \citep{Huang_etal2019}.

\cite{ZhangLaber2015} in a numerical study treat $\max_{1\leq i\leq p}|\hat{t%
}_{i}|$ and $\sum_{i=1}^{p}|\hat{t}_{i}|$ with t-statistic $\hat{t}_{i}$
(see (\ref{Tt}) below). The latter is self-standardized and therefore
scale-invariant. They allow for autoregressive covariates, claim $p$ $%
\rightarrow $ $\infty $ without a mechanism in theory or in their simulation
study, and use a parametric simulation to approximate the limit
distributions, but supporting theory is not provided and size control is not
studied. Although the parametric simulation-based procedure avoids a tuning
parameter for ART and therefore McKeague and Qian's (%
\citeyear{McKeague_Qian_2015}) double-bootstrap, the data generating
process-dependent limit distributions must be known. Moreover, we find a
max-t or ave-t-test never dominates a max or ave-test: see Section \ref%
{sec:sim}, cf. \cite{HillMotegi2020}, \cite{Hill_2025_maxtest}, and \cite%
{HillLi2025}. Indeed, compared to an ave-test in general a max-test yields
better size control over $p$, and greater power over $p$ with very few
exceptions \citep[see
also][]{HillMotegi2020}.

We extend the marginal screening framework to a \textit{true} high
dimensional [HD] setting, with an increasing number of covariates $p$ $=$ $%
p_{n}$ $\rightarrow $ $\infty $ provided $\ln (p_{n})$ $=$ $o(n^{a})$ for
some $a$ $>$ $0$, under weak dependence and heterogeneity assumptions. The
magnitude of $a$ depends on memory decay and growth of higher moments. Thus
we allow for time series or cross sections with non-iid data.\footnote{%
Our proposed method and theory also allow for \textit{both} time series and
cross-sections in a panel. High dimensionality here is also naturally
affected by fixed effects dummies. We do not, however, expand notation to
include this case for the sake of clarity.} We do so \textit{pre-estimation}
by avoiding the endogenous selection of an optimal covariate index
altogether. Indeed, it is unnecessary for the purpose of a test. Likewise,
we sidestep sequential testing and therefore avoid the need for \textit{fwer}%
\ control, contrary to \cite{BenjaminiHochberg1995}, \cite{Dudoit_etal2003}, 
\cite{Efron2006} and others, and contrary to \cite{McKeague_Qian_2015} who
control \textit{fwer}\ with a single test by using a nuisance sequence $%
\lambda _{n}$. The latter is selected in practice by using a second
bootstrap procedure. We use a single test where a single dependent bootstrap
naturally yields an asymptotic level $\alpha $ test, without the need for a
nuisance sequence. Working pre-estimation also allows us to sidestep
estimating a full HD model under sparsity. The latter is computationally
intensive, may be unstable or fragile \citep[e.g.][]{Kolesar_etal_2024}, and
ultimately potentially \textit{illusory}: sparsity may not be appropriate or
feasible in many economic and finance settings 
\citep[see,
e.g.][]{Giannone_etal2021}.

Reduced dimension regression models in the high dimensional parametric
statistics and machine learning literatures are variously called \textit{%
marginal regression}, \textit{correlation learning}, and \textit{sure
screening} \citep[e.g.][]{FanLv2008,Genovese_etal_2012}, and includes 
\textit{canonical correlation analysis} \citep[e.g.][]{McKeagueZhang2022}.
Using a standard marginal regression setting with sample $%
\{x_{t},y_{t}\}_{t=1}^{n}$, \cite{McKeague_Qian_2015} regress $y_{t}$ on
each covariate $(x_{i,t}$ $:$ $1$ $\leq $ $i$ $\leq $ $p)$ one at a time.
This yields marginal coefficients 
\begin{equation*}
\hat{\phi}_{i,n}=\widehat{cov}(x_{i},y)/\widehat{var}(x_{i}),
\end{equation*}%
and an endogenously selected max-index $\hat{l}_{n}$ $\in $ $\arg
\max_{1\leq l\leq p}|\hat{\phi}_{l,n}|$ ideally representing the most
informative predictor when unique, and therefore the most informative
coefficient $\hat{\phi}_{\hat{\imath}_{n},n}$. An implicit iid assumption is
imposed in order to study $\hat{\phi}_{\hat{l}_{n},n}$ as a vehicle for
testing $H_{0}$ : $\phi _{0}^{\ast }$ $=$ $0$ where $\phi _{0}^{\ast }$ $=$ $%
\phi _{i^{\ast }}$ and 
\begin{equation*}
\phi _{i}=\frac{cov\left( x_{i},y\right) }{v(x_{i})}\text{ and }i^{\ast }\in %
\argmax_{1\leq l\leq p}\left\vert \phi _{i}\right\vert .
\end{equation*}%
Thus, the hypotheses of interest reduce to%
\begin{eqnarray}
&&H_{0}:cov\left( x_{i},y\right) =0\text{ }\forall i\Leftrightarrow
H_{0}:\phi _{0}^{\ast }=0  \label{H0H1} \\
&&H_{1}:cov\left( x_{i},y\right) \neq 0\text{ for some }\forall i.  \notag
\end{eqnarray}%
(In a linear regression setting $y_{t}$ $=$ $\delta _{i}$ $+$ $\phi ^{\prime
}x_{t}$ $+$ $\epsilon _{t}$ with $\mathbb{E}\epsilon _{t}x_{t}$ $=$ $0$,
this is equivalent to testing $H_{0}$ $:$ $\phi $ $=$ $0$ where $\phi $ $\in 
$ $\mathbb{R}^{p}$. Cf. \cite{Hill_2025_maxtest}).

See \cite{McKeague_Qian_2015} for discussion, and for historical details on
the complexity of post model selection and the resulting non-standard
asymptotics for $\sqrt{n}(\hat{\phi}_{\hat{l}_{n},n}$ $-$ $\phi _{0}^{\ast
}) $. They present an \textit{adaptive resampling test} in order to resolve
the non-standard asymptotics implicit in $\sqrt{n}\hat{\phi}_{\hat{l}_{n},n}$%
, taking into account the fact that the limit distribution of $\sqrt{n}(\hat{%
\phi}_{\hat{l}_{n},n}$ $-$ $\phi _{0}^{\ast })$ exhibits a break when $\phi
_{0}^{\ast }$ $=$ $0$ because $i^{\ast }$\ is not identified, cf. %
\citet{Andrews_Cheng_2012,AndrewsCheng2013,AndrewsCheng2014}. A subsequent 
\textit{double} bootstrap is required in order to set a nuisance sequence,
which is valid only when $p$ is fixed \citep[p.
1425]{McKeague_Qian_2015}. This incurs a sizeable computation cost compared
to our approach where $p_{n}/n$ $\rightarrow $ $\infty $ \textit{is}
allowed, as we will see in the numerical study below.

The endogenously selected post-estimation $\hat{\phi}_{\hat{l}_{n},n}$ is
identically $\max_{1\leq i\leq p_{n}}|\hat{\phi}_{i,n}|$ up to a scale
constant in $\{-1,1\}$, where the latter is naturally rid of endogenous
selection through $\hat{l}_{n}$. The only added information $\hat{\phi}_{%
\hat{l}_{n},n}$ provides is the directional impact (coefficient sign) of the
most informative covariate. But that is irrelevant for a test of (\ref{H0H1}%
).

Indeed, $\max_{1\leq i\leq p_{n}}|\sqrt{n}\hat{\phi}_{i,n}|$ can be
characterized in a variety of dependence and heterogeneity settings. We
study max-statistics like 
\begin{equation}
\mathcal{\hat{T}}_{n}:=\max_{1\leq i\leq p_{n}}\left\vert \sqrt{n}\hat{\phi}%
_{i,n}\right\vert \text{ or }\max_{1\leq i\leq p_{n}}\left\vert \hat{t}%
_{i}\right\vert \text{ where }\hat{t}_{i}:=\frac{\sqrt{n}\hat{\phi}_{i,n}}{%
se_{i,n}}  \label{Tt}
\end{equation}%
as a means for testing $H_{0}$, allowing for dependence and non-stationarity
in the observed sample $\{x_{t},y_{t}\}_{t=1}^{n}$, and HD in the sense $%
p_{n}$ $>>$ $n$ and $p_{n}$ $\rightarrow $ $\infty $, where $p_{n}/n$ $%
\rightarrow $ $\infty $ is possible. We write $se_{i,n}$ to denote a
standard error for $\sqrt{n}\hat{\phi}_{i,n}$ in a bivariate regression. In
the spirit of \cite{ZhangLaber2015} we also tackle average statistics like 
\begin{equation*}
\widehat{\mathfrak{A}}_{n}:=\sum_{i=1}^{p_{n}}\left\vert \sqrt{n}\hat{\phi}%
_{i,n}\right\vert \text{ or }\widehat{\mathfrak{A}}_{n}:=\sum_{i=1}^{p_{n}}%
\left\vert \hat{t}_{i}\right\vert
\end{equation*}%
due to their plausible strength against weak dense alternatives %
\citep[cf.][]{ZhangLaber2015}. Our limit theory for $\mathcal{\hat{T}}_{n}$
trivially extends to $\widehat{\mathfrak{A}}_{n}$.

We allow for a large class of weights as in \cite{HillMotegi2020} and \cite%
{Hill_2025_maxtest}. Further, it is a simple generalization to allow $p_{n}$ 
$\rightarrow $ $p^{\ast }$ for some $p^{\ast }$ $\in $ $\mathbb{N}\cup
\infty $, but that does not ensure a consistent test when the number of
covariates available diverges in $n$. We may also allow for weights other
than the standard error: see Section \ref{sec:test_stat} %
\citep[c.f.][]{HillMotegi2020,Hill_2025_maxtest,HillLi2025}.

By allowing for dependence the covariates may contain, or be entirely made
up of, lagged values of $y_{t}$. In the latter case $\mathcal{\hat{T}}_{n}$
is a max-autocorrelation, a classic subject in the extreme value theory
literature where $y_{t}$ is white noise under $H_{0}$ 
\citep[see][for
references]{HillMotegi2020}. Although our test is parametric, the model used
need not represent the true data generating process. In this broad setting
we also derive the local power of our test under a sequence of $\sqrt{n}%
/(\ln (p_{n}))^{2}$-local alternatives with $\ln (p_{n})$ $=$ $o(n^{1/4})$
hence exponential $p_{n}/[\mathcal{C}\exp \{\mathcal{K}n^{1/4}\}]$ $%
\rightarrow $ $0$ for any $\mathcal{C},\mathcal{K}$ $>$ $0$.

A max-test alleviates the need for adaptive re-sampling via non-parametric
bootstrap, although inference can be easily gained by multiplier (wild)
block bootstrap \citep[cf.][]{Hill_2025_maxtest}. The latter allows for
asymptotically correct inference without prior knowledge of the limit
distribution. Furthermore, a max-test can yield amplified power against
small or weak deviations from the null amounting to a sparse signal 
\citep[see,
e.g.,][]{Hill_2025_maxtest,HillLi2025}, in particular at small sample sizes.
See Section \ref{sec:sim}. We do not explore an endogenously selected
optimal covariate index $\hat{l}_{n}$ under weak dependence, leaving that
for future work.

Historically, of course, there is broad interest in an endogenously selected
\textquotedblleft \textit{most informative}\textquotedblright\ regressor, or
generally (adaptive) post-model-selection inference. Indeed, selecting an
optimal predictor can be performed forward-stepwise to generate a subsequent
regression model. See, e.g., \cite{LeebPotscher2006} for deep results, and
consult \cite{LaberMurphy2011}, \cite{Berk_etal2013}, \cite%
{McKeague_Qian_2015} and \citet{McCloskey2017,McCloskey2020,McCloskey2024}\
for further reading.

The present paper extends work in \cite{Hill_2025_maxtest}, who uses low
dimensional models with a fixed dimensional nuisance covariate to test a $%
p_{n}$-dimensional parameter in an iid regression setting where $p_{n}$ $>>$ 
$n$, $p_{n}$ $\rightarrow $ $\infty $ and $p_{n}/n$ $\rightarrow $ $\infty $
are allowed. The method relates to work on conditional mean models in \cite%
{TangWangBarut2018}, and generalizes the max-t-test in \cite{ZhangLaber2015}%
. \cite{Hill_2025_maxtest} imposes an iid assumption in a linear regression
framework, yielding four covariate tail cases by invoking \cite%
{Nemirovski2000}'s inequality (bounded, sub-Gaussian, sub-exponential, $%
\mathcal{L}_{r}$-bounded) and therefore varying bounds on $p_{n}$
(exponential or polynomial). Nuisance covariates, however, are typically
ignored in a correlation screening setting. Further, weak dependence
evidently forces us to work in a now standard sub-exponential tail setting,
hence we rely on Bernstein's inequality for HD arguments leading to an
exponential bound on $p_{n}$.

Weak dependence is a key requirement if high dimensionality is due to the
nuances of large panel models with a time dimension, including many
qualitative dummies, and (multi-way) fixed effects %
\citep{AngristHahn2004,BelloniChernozhukovHansen2014,Correia2016}. As far as
we know, time series and panels have not been studied in the correlation
learning or sure screening literature. Indeed, compared to \cite%
{McKeague_Qian_2015}, \cite{ZhangLaber2015}, \cite{TangWangBarut2018}, and 
\cite{Hill_2025_maxtest}, we deliver a complete theory for arbitrarily
weighted statistics $\max_{1\leq i\leq p_{n}}|\mathcal{W}_{i,n}\sqrt{n}\hat{%
\phi}_{i,n}|$ or $\sum\nolimits_{i=1}^{p_{n}}|\mathcal{W}_{i,n}\sqrt{n}\hat{%
\phi}_{i,n}|$ with possibly stochastic $\mathcal{W}_{i,n}$, for possibly
dependent and heterogeneous data, where $p_{n}$ $\rightarrow $ $\infty $.
Our test yields good size and strong power, in particular against small
deviations from the null (sparse signal), and is computationally fast.
Indeed, our test dominates ART against weak sparse alternatives. While \cite%
{ZhangLaber2015} simulate the \textit{known limit} distributions of $%
\max_{1\leq i\leq p}|\hat{t}_{i}|$ and $\sum_{i=1}^{p}|\hat{t}_{i}|$, we use
dependent wild (parametric) bootstrap methods to approximate the \textit{%
unknown small sample} distribution of $\mathcal{\hat{T}}_{n}$ for any $%
\mathcal{W}_{i,n}$ under mild assumptions.

Our work is related to the literature on high dimensional covariance
matrices and the related coherence of a random matrix. Weak limit theory in 
\cite{CaiJiang2012} for coherence estimators can be used to study $%
\max_{1\leq i\leq p_{n}}\{(\sqrt{n}\hat{\phi}_{i,n})^{2}\}$ under a harsh
assumption of iid joint normality. This will generally not hold in time
series settings in macroeconomics and finance where conditional volatility
arises. They yield asymptotics under sub-exponential $\ln (p)/n$ $%
\rightarrow $ $0$, exponential $\ln (p)/n$ $\rightarrow $ $(0,\infty )$ and
super-exponential cases. See also \cite{Jiang2004}, \cite{LiLiuRosalsky2009}%
, \cite{LiuLinShao2008}, \cite{CaiJiang2011} and \cite{ShaoZhou2014} for
related results governing (nearly) HD covariance matrices, ranging from
linear $p$ $\simeq $ $n$ to sub-exponential $\ln (p)/n^{a}$ $\rightarrow $ $%
0 $ for $a$ $\leq $ $1/3$, as well as necessary and sufficient conditions.
We allow for a significantly larger class of stochastic processes covering
weak dependence, while still retaining a sub-exponential rate $\ln (p)/n^{a}$
$\rightarrow $ $0$ for $a$ $\in $ $(0,1)$ that is larger under faster memory
decay or slower moment growth.

The rest of the paper is laid out as follows. Section \ref{sec:test_stat}
presents the test statistic and bootstrap algorithm. In Section \ref%
{sec:mr_phys} we derive fundamental results under physical dependence, and
present a complete bootstrap theory covering local power in Section \ref%
{sec:boot_theory}. Section \ref{sec:sim} contains a numerical study and
concluding remarks are left for Section \ref{sec:conclus}. All figures,
tables and an empirical study involving the VIX volatility index are
relegated to the supplemental material \citet[Appendices F and
G]{sm_sig_pred}.\medskip 

We assume all random variables exist on a complete measure space $(\Omega ,%
\mathcal{F},\mathbb{P})$ in order to side-step any measurability issues. $%
a.s.$ is $\mathbb{P}$-\textit{almost surely}. $\mathbb{E}$ is the
expectations operator; $\mathbb{E}_{\mathcal{A}}$ is expectations
conditional on $\mathcal{F}$-measurable $\mathcal{A}$. $|x|$ $=$ $%
\sum_{i,j}|x_{i,j}|$ is the $l_{1}$-norm, $|x|_{2}$ $=$ $%
(\sum_{i,j}x_{i,j}^{2})^{1/2}$ is the Euclidean or $l_{2}$ norm; $||\cdot ||$
is the spectral norm; $||\cdot ||_{r}$ denotes the $\mathcal{L}_{r}$-norm.
Write with some abuse of notation $\max_{i,n}$ $:=$ $\limsup_{n\rightarrow
\infty }\max_{1\leq i\leq p_{n}}$, $\max_{i,n,t}$ $:=$ $\limsup_{n%
\rightarrow \infty }\max_{1\leq i\leq p_{n}}\max_{1\leq t\leq n}$, and $%
\max_{i,t}$ $:=$ $\max_{1\leq i\leq p_{n}}\max_{1\leq t\leq n}$. $x\wedge y$ 
$=$ $\min \{x,y\}$; $x\vee y$ $=$ $\max \{x,y\}$. $Z$ $\sim $ $F$ states $Z$
has distribution $F$. $\mathcal{I}_{\mathcal{A}}$ is the indicator function: 
$\mathcal{I}_{\mathcal{A}}$ $=$ $1$ if $\mathcal{A}$\ is true. $\lfloor
\cdot \rfloor $ is the floor operator.

As long as there is no confusion, we say \textquotedblleft \textit{uniformly}%
" to denote, e.g., $\max_{i,n,t}$ or $\min_{i,n}$, depending on the implicit
case.

\section{Test Statistic and Bootstrap Procedure\label{sec:test_stat}}

\subsection{Test Statistic}

Marginal regression models for sample $\{y_{t},x_{t}\}_{t=1}^{n}$ with
vector-valued $x_{t}$ $=$ $[x_{i,t}]_{i=1}^{p_{n}}$ are%
\begin{equation}
y_{t}=\delta _{i}+\phi _{i}x_{i,t}+v_{i,t}=\beta _{i}^{\prime }\mathbf{x}%
_{i,t}+v_{i,t},  \label{mr_model}
\end{equation}%
where $\mathbf{x}_{i,t}$ $=$ $[1,x_{i,t}]^{\prime }$, and unique $\beta _{i}$
$\in $ $\mathbb{R}^{2}$ satisfies $\mathbb{E}\mathbf{x}_{i,t}v_{i,t}$ $=$ $0$
for each model $i$ $=$ $1,...,p_{n}$. Model (\ref{mr_model}) may otherwise
be mis-specified in the classic sense $\mathbb{P}(\mathbb{E}%
_{x_{i,t}}v_{i,t} $ $=$ $0)$ $<$ $1$ \citep[e.g.][]{Sawa1978,White1982}.
Under $H_{0}$ it follows $y_{t}$ $=$ $\delta _{i}$ $+$ $v_{i,t}$ for all $i$%
, therefore $\delta _{i}$ $=$ $\delta $ $:=$ $\mathbb{E}y_{t}$ $\forall i$.
Thus, tautologically under $H_{0}$ the regression error $v_{i,t}$ reduces
for each $i$ to 
\begin{equation*}
v_{i,t}=v_{t}\equiv y_{t}-\mathbb{E}y_{t}\text{ }\forall i.
\end{equation*}

Define the least squares estimator $\hat{\beta}_{i,n}$ $:=$ $(\sum_{i=1}^{n}%
\mathbf{x}_{i,t}\mathbf{x}_{i,t}^{\prime })^{-1}$ $\times $ $\sum_{i=1}^{n}%
\mathbf{x}_{i,t}y_{t}$, and, e.g., $\bar{x}_{n}$ $:=$ $1/n%
\sum_{t=1}^{n}x_{i,t}$, hence classically%
\begin{equation*}
\hat{\phi}_{i,n}=\frac{1/n\sum_{t=1}^{n}\left( x_{i,t}-\bar{x}_{i,n}\right)
\left( y_{t}-\bar{y}_{n}\right) }{1/n\sum_{t=1}^{n}\left( x_{i,t}-\bar{x}%
_{i,n}\right) ^{2}}.
\end{equation*}%
The general (weighted) max- and ave-statistics are 
\begin{equation*}
\mathcal{\hat{T}}_{n}=\max_{1\leq i\leq p_{n}}\left\vert \mathcal{W}_{i,n}%
\sqrt{n}\hat{\phi}_{i,n}\right\vert \text{ and }\widehat{\mathfrak{A}}%
_{n}=\sum_{i=1}^{p_{n}}\left\vert \mathcal{W}_{i,n}\sqrt{n}\hat{\phi}%
_{i,n}\right\vert ,
\end{equation*}%
where $\mathcal{W}_{i,n}$ are possibly stochastic weights satisfying $%
\max_{1\leq i\leq p_{n}}|\mathcal{W}_{i,n}$ $-$ $\mathcal{W}_{i}|$ $\overset{%
p}{\rightarrow }$ $0$ for some sequence of finite non-random numbers $\{%
\mathcal{W}_{i}\}_{i=1}^{\infty }$ where $\min_{i}\{\mathcal{W}_{i}\}$ $>$ $%
0 $. Max- and ave-tests use $\mathcal{W}_{i,n}$ $=$ $1$ while max-t and
ave-t-tests use $\mathcal{W}_{i,n}$ $=$ $1/se_{i,n}$. In a pure
autoregressive [AR] or ARX setting where all or some $x_{i,t}$ $=$ $y_{t-i}$%
, non-random weights $\{\mathcal{W}_{i,n}\}$ akin to those in \cite%
{LjungBox1978}, where $\mathcal{W}_{i,n}$ $\rightarrow $ $1$, may be used to
control for the impact of using lags \citep[cf.][]{HillMotegi2020}.\footnote{%
In principle ARMAX models are allowed, but we do not study here required
iterative or Kalman filter-like estimators.} In 
\citet[Appendix
C]{sm_sig_pred} we present a consistent HAC estimator for the standard error 
$se_{i,n}$ and show it satisfies our weight requirements. Empirically,
however, we find $\mathcal{W}_{i,n}$ $=$ $1$ dominates in nearly all cases
across hypotheses and models studied in Section \ref{sec:sim}. See also
Remark \ref{rem:hac} below.

\subsection{Bootstrap}

A bootstrap test of $H_{0}$ under a wide variety of dependence settings can
be constructed in several ways. We consider a dependent multiplier (wild)
bootstrap \citep[cf.][]{Bose1988,Liu1988,Shao2011_JoE} or parametric wild
bootstrap in order to approximate the asymptotic null distribution of $%
\mathcal{\hat{T}}_{n}$. We do require knowledge of the null distribution as
is the case for asymptotic distribution simulation-based methods. Identical
methods and theory extend to the average $\widehat{\mathfrak{A}}_{n}$.

Define compact parameter spaces $\Phi _{i}$ $\subset $ $\mathbb{R}^{2}$, and
assume $0$ and $\phi _{i}$ are interior points of $\Phi _{i}$. Define
regressor covariance matrices $\mathcal{H}_{i}$ $=$ $\mathbb{E}\mathbf{x}%
_{i,t}\mathbf{x}_{i,t}^{\prime }.$

\subsubsection{Dependent Wild Bootstrap [DWB]}

Lemma \ref{lm:mr_ng_phys}, below, dictates that we need only bootstrap the
asymptotic approximation $\max_{1\leq i\leq p_{n}}|[0,1]\mathcal{H}%
_{i}^{-1}1/\sqrt{n}\sum_{i=1}^{n}\mathbf{x}_{i,t}v_{t}|$. Let $b_{n}$ be a
pre-set block size, $1\leq b_{n}<n$, $b_{n}/n^{1-\iota }$ $\rightarrow $ $0$
for some tiny $\iota $ $>$ $0$. The number of blocks is $\mathcal{N}_{n}$ $=$
$[n/b_{n}]$. If independence were known then $b_{n}$ $=$ $1$ can be assumed,
or if finite or $\mathcal{J}$-dependence were known then $b_{n}$ $=$ $%
\mathcal{J}$ $\in $ $\mathbb{N}$. We only look at the weak dependence case
here, however, thus $b_{n}$ $\rightarrow $ $\infty $.

Define index sets $\mathfrak{B}_{s}$ $:=$ $\{(s$ $-$ $1)b_{n}$ $+$ $1,\dots
,sb_{n}\}$ with $s$ $=$ $1,\dots ,\mathcal{N}_{n}$, and $\mathfrak{B}_{%
\mathcal{N}_{n}+1}$ $=$ $\{\mathcal{N}_{n}b_{n},...,n\}$. Generate iid
random numbers $\{\xi _{1},\dots ,\xi _{\mathcal{N}_{n}}\}$ with $\mathbb{E}%
[\xi _{i}]$ $=$ $0$, $\mathbb{E}[\xi _{i}^{2}]$ $=$ $1$, and $\mathbb{E}[\xi
_{i}^{4}]$ $<$ $\infty $. Now define the multiplier $\eta _{t}$ $=$ $\xi
_{s} $ if $t$ $\in $ $\mathfrak{B}_{s}$; thus $\eta _{t}$ $=$ $\xi _{t}$
under independence. The DWB test statistic is 
\begin{equation*}
\mathcal{\tilde{T}}_{n}:=\max_{1\leq i\leq p_{n}}\left\vert \sqrt{n}\mathcal{%
W}_{i,n}\left[ 0,1\right] \widehat{\mathcal{H}}_{i,n}^{-1}\widehat{\mathcal{%
\tilde{G}}}_{i,n}\right\vert
\end{equation*}%
where%
\begin{equation*}
\widehat{\mathcal{H}}_{i,n}=\frac{1}{n}\sum_{t=1}^{n}\mathbf{x}_{i,t}\mathbf{%
x}_{i,t}^{\prime }\text{ and }\widehat{\mathcal{\tilde{G}}}_{i,n}:=\frac{1}{n%
}\sum_{t=1}^{n}\eta _{t}\left\{ \mathbf{x}_{i,t}\left( y_{t}-\bar{y}%
_{n}\right) -\frac{1}{n}\sum_{r=1}^{n}\mathbf{x}_{i,r}\left( y_{r}-\bar{y}%
_{n}\right) \right\} .
\end{equation*}%
Centering $\mathbf{x}_{i,t}\left( y_{t}-\bar{y}_{n}\right) $ $-$ $%
1/n\sum_{r=1}^{n}\mathbf{x}_{i,r}(y_{r}-\bar{y}_{n})$ is exploited because $%
\mathbf{x}_{i,t}(y_{t}$ $-$ $\mathbb{E}y_{t})$ need not have a zero mean (it
does not under $H_{1}$ for some $i$), while we need to replicate the
limiting null distribution under any case for a valid test asymptotically 
\citep[see,
e.g.,][]{Shao2011_JoE}.

Repeat the above steps $\mathcal{M}$ times, each time drawing a new
independent sample $\{\xi _{s}\}_{s=1}^{_{\mathcal{N}_{n}}}$, yielding $\{%
\mathcal{\tilde{T}}_{n,j}\}_{j=1}^{\mathcal{M}}$. The approximate p-value is 
$\mathcal{\hat{P}}_{n,\mathcal{M}}$ $:=$ $1/\mathcal{M}\sum_{j=1}^{\mathcal{M%
}}\mathcal{I}_{\mathcal{\tilde{T}}_{n,j}\geq \mathcal{\hat{T}}_{n}}$. A
bootstrap test rejects $H_{0}$ at significance level $\alpha $ when $%
\mathcal{\hat{P}}_{n,\mathcal{M}}$ $<$ $\alpha $.

\subsubsection{Parametric (Dependent) Wild Bootstrap [PWB]}

Let $\{\eta _{t}\}$ be the same bootstrap draw as above.\ Define residuals
under the null $\hat{v}_{n,t}$ $\equiv $ $y_{t}$ $-$ $\bar{y}_{n},$ and
generate wild bootstrap draws $y_{n,t}^{\ast }$ $\equiv $ $\bar{y}_{n}$ $+$ $%
\hat{v}_{n,t}\eta _{t}$. Now construct regression models $y_{n,t}^{\ast }$ $%
= $ $\tilde{\delta}_{i}$ $+$ $\tilde{\phi}_{i}x_{i,t}$ $+$ $\tilde{v}%
_{n,i,t} $ for each $i$, and let $\widehat{\tilde{\phi}}_{i,n}$ be the
bootstrapped least squares estimator of $\tilde{\phi}_{i}$. The PWB test
statistic is $\mathcal{\tilde{T}}_{n}^{(p)}$ $\equiv $ $\max_{1\leq i\leq
k_{\theta ,n}}|\sqrt{n}\mathcal{W}_{n,i}\widehat{\tilde{\phi}}_{i,n}|$.
Repeat $\mathcal{M}$ times yielding a p-value approximation $\mathcal{\hat{P}%
}_{n,\mathcal{M}}^{(p)}$ $:=$ $1/\mathcal{M}\sum_{j=1}^{\mathcal{M}}\mathcal{%
I}_{\mathcal{\tilde{T}}_{n,j}^{(p)}\geq \mathcal{\hat{T}}_{n}}$.

As presented, where the null hypothesis is imposed and the dependent
bootstrap uses centering, the two methods and their generated draws $\sqrt{n}%
\mathcal{W}_{i,n}\left[ 0,1\right] \widehat{\mathcal{H}}_{i,n}^{-1}\widehat{%
\mathcal{\tilde{G}}}_{i,n}$ and $\sqrt{n}\mathcal{W}_{n,i}\widehat{\tilde{%
\phi}}_{i,n}$ are distinct in finite samples and uniformly asymptotically,
though both yield valid tests asymptotically. See 
\citet[Appendix
D]{sm_sig_pred} for general comparisons and cases when the procedures yield
the same results in finite samples or asymptotically.

\section{HD Marginal Screening Under Physical Dependence\label{sec:mr_phys}}

We first describe the dependence framework and then present the main
theoretic results.

\subsection{Physical Dependence}

We work in the setting of \cite{Wu2005}, cf. \cite{WuLin2005}. Let $%
\{\epsilon _{i,t}$ $:$ $1$ $\leq $ $i$ $\leq $ $p_{n},\tilde{\epsilon}%
_{t}\}_{t\in \mathbb{Z}}$ be iid sequences, and assume measurable functions $%
g_{i,t}(\cdot )$ and $\tilde{g}_{t}(\cdot )$\ exist satisfying%
\begin{equation*}
x_{i,t}=g_{i,t}\left( \epsilon _{i,t},\epsilon _{i,t-1},\ldots \right) \text{
and }y_{t}=\tilde{g}_{t}\left( \tilde{\epsilon}_{t},\tilde{\epsilon}%
_{t-1},\ldots \right) .
\end{equation*}%
Thus $g_{i,t}$ and $\tilde{g}_{t}$\ may depend on $t$, allowing for
non-stationarity. Examples abound, including linear and nonlinear processes
like ARMA and threshold models, and random volatility like GARCH models or
stochastic volatility. Under independence by default $x_{i,t}$ $=$ $\epsilon
_{i,t}$ and $y_{t}$ $=$ $\tilde{\epsilon}_{t}$ ($g_{i,t}$ and $\tilde{g}_{t}$
are identity functions). We assume\ below that $\{x_{i,t},y_{t}\}$ have all
moments finite to expedite asymptotic theory, thus the GARCH case, for
example, would hold when underlying innovations like $\epsilon _{i,t}$ have
bounded support \citep[e.g.][]{Basrak_etal2002}. In future work, moment
conditions on $\{x_{i,t},y_{t}\}$ can be weakened by case at the expense of
more complicated proofs.

Let $\{\epsilon _{i,t}^{\prime },\tilde{\epsilon}_{t}^{\prime }\}$ be
independent copies of $\{\epsilon _{i,t},\tilde{\epsilon}_{t}\}$ and define
coupled processes%
\begin{eqnarray*}
&&x_{i,t}^{\prime }(m):=g_{i,t}\left( \epsilon _{i,t},\ldots ,\epsilon
_{i,t-m+1},\epsilon _{i,t-m}^{\prime },\epsilon _{i,t-m-1},\ldots \right) 
\text{ for }m\geq 0 \\
&&y_{t}^{\prime }(m):=\tilde{g}_{t}\left( \tilde{\epsilon}_{t},\ldots ,%
\tilde{\epsilon}_{t-m+1},\tilde{\epsilon}_{t-m}^{\prime },\tilde{\epsilon}%
_{t-m-1},\ldots \right) .
\end{eqnarray*}%
Now define $\mathcal{L}_{r}$-physical dependence [$\mathcal{L}_{r}$-\textit{%
pd}] coefficients $\{\theta _{i,t}^{(r)},\tilde{\theta}_{t}^{(r)}\}$ and
their aggregations $\{\Theta _{i,t}^{(r)},\tilde{\Theta}_{t}^{(r)}\}$, 
\begin{eqnarray*}
&&\theta _{i,t}^{(r)}(m):=\left\Vert x_{i,t}-x_{i,t}^{\prime }(m)\right\Vert
_{r}\text{ with }\Theta _{i,t}^{(r)}:=\sum_{m=0}^{\infty }\theta
_{i,t}^{(r)}(m) \\
&&\tilde{\theta}_{t}^{(r)}(m):=\left\Vert y_{t}-y_{t}^{\prime
}(m)\right\Vert _{r}\text{ with }\tilde{\Theta}_{t}^{(r)}:=\sum_{m=0}^{%
\infty }\tilde{\theta}_{t}^{(r)}(m)
\end{eqnarray*}

A common approach in the dependence literature is to decompose coefficients
into heterogeneity and memory components 
\citep[many examples exist; see, e.g.,][]{McLeish1975,
Andrews1988,GallantWhite1988}. We do the same here for greater clarity. Let%
\begin{equation}
\theta _{i,t}^{(r)}(m)\vee \tilde{\theta}_{t}^{(r)}(m)\leq
d_{i,t}^{(r)}\times \psi _{i,m}  \label{thmth}
\end{equation}%
where \textit{coefficients}\ $\psi _{i,m}$ satisfy $\max_{i,n}\psi _{i,m}$ $%
= $ $O(m^{-\lambda -\iota })$ and $\psi _{i,0}$ $=$ $1$\ for some \textit{%
size} $\lambda $ $\geq $ $1$ depicting memory decay, and \textit{constants} $%
d_{i,t}$ that capture heterogeneity.\footnote{%
See \citet[Defn. 1.4]{McLeish1975} for coinage of the term \textquotedblleft 
\textit{size}" here.} Such a representation holds for many processes,
including linear and nonlinear structures, nonlinear Markov chains,
(nonlinear) GARCH, and so on, cf. \cite{Wu2005} and others. Indeed, $%
\mathcal{L}_{r}$-\textit{pd} implies and is implied by the mixingale
property (with an asymmetric size implication), and mixingales nest Near
Epoch Dependent and mixing processes \citep[see][]{Hill_2025_mixg}. Thus (%
\ref{thmth}) covers a very broad range of dependent processes.

By construction and Minkowski's inequality $\theta _{i,t}^{(r)}(m)\vee 
\tilde{\theta}_{t}^{(r)}(m)$ $\leq $ $2\{||x_{i,t}||_{r}\vee ||y_{t}||_{r}\}$%
, hence we have a moment-based upper bound $d_{i,t}^{(r)}$ $\leq $ $%
K\{\left\Vert x_{i,t}\right\Vert _{r}$ $\vee $ $\left\Vert y_{t}\right\Vert
_{r}\}$. In general $d_{i,t}^{(r)}$\ articulates permitted trend in the $%
r^{th}$ moment. Then, given $\lambda $ $\geq $ $1$, the aggregations are
(uniformly) bounded: 
\begin{eqnarray}
&&\Theta _{n}^{(r)}:=\max_{i,t}\Theta _{i,t}^{(r)}\leq K\max_{i,t}\left\Vert
x_{i,t}\right\Vert _{r}\left( 1+\sum_{m=1}^{\infty }m^{-\lambda -\iota
}\right) =K\max_{i,t}\left\Vert x_{i,t}\right\Vert _{r}  \label{PHI_z} \\
&&\tilde{\Theta}_{n}^{(r)}:=\max_{t}\{\tilde{\Theta}_{t}^{(r)}\}\leq
K\max_{t}\left\Vert y_{t}\right\Vert _{r}\left( 1+\sum_{m=1}^{\infty
}m^{-\lambda -\iota }\right) =K\max_{t}\left\Vert y_{t}\right\Vert _{r}. 
\notag
\end{eqnarray}

Recall $\mathcal{H}_{i}$ $:=$ $\mathbb{E}\mathbf{x}_{i,t}\mathbf{x}%
_{i,t}^{\prime }$.

\begin{assumption}
\label{assum:marg-reg} \ \ \medskip \newline
$a$. \textbf{(Dependence)} $(x_{i,t},y_{t})$ are second order stationary for
all $i$, governed by non-degenerate distributions; $\mathcal{L}_{r}$-\emph{pd%
} for some $r$ $\geq $ $4$, with size $\lambda $ $\geq $ $2$; and $%
\max_{i,n,t}\{||x_{i,t}||_{r}\vee ||y_{t}||_{r}\}$ $\leq $ $ar^{b}$ for some
finite $a>0$ and $b$ $\in $ $(1/2,\infty )$.\medskip \newline
$b$. \textbf{(Identification)} $\mathbb{E}(y_{t}$ $-$ $\beta _{i}^{\ast
\prime }\mathbf{x}_{i,t})\mathbf{x}_{i,t}$ $=$ $\boldsymbol{0}_{2}$ for each 
$i$ and unique $\beta _{i}^{\ast }$ in the interior of $\mathcal{B}_{i}$.$%
\medskip $\newline
$c.$ \textbf{(Non-degeneracy)} $\max_{i,n}1/n\sum_{t=1}^{n}(x_{i,t}$ $-$ $%
\bar{x}_{i,n})^{2}$ $>$ $0$ $a.s.$ Further $\max_{i,n}\mathbb{E}(x_{i,t}$ $-$
$\mathbb{E}x_{i,t})^{2}$ $>$ $0$ and $\lim \inf_{n\rightarrow \infty
}\{\inf_{\varsigma ^{\prime }\varsigma =1}\mathbb{E}(1/\sqrt{n}%
\sum_{t=1}^{n}\varsigma ^{\prime }\mathcal{H}_{i}^{-1}\mathbf{x}%
_{i,t}v_{t}\varsigma )^{2}\}$ $>$ $0$ for each $i$.
\end{assumption}

\begin{remark}
\normalfont$\mathcal{L}_{r}$-\emph{pd} is imposed unit-wise and does not
restrict dependence \emph{across} units $i$. We must restrict cross-unit
dependence, however, for a bootstrap theory in Section \ref{sec:boot_theory}%
. We generally only need $r$ $\geq $ $4$, but a HAC standard error weight
requires $r$ $\geq $ $8$ to ensure products like $x_{i,t}^{2}x_{i,t-l}^{2}$
are $\mathcal{L}_{r/4}$-\emph{pd} for $r/4$ $\geq $ $2$, the minimal
requirement for high dimensional test consistency. See Hill (%
\citeyear{Hill_2025_mixg}, \citeyear{sm_max_LLN}: Appendix C).
\end{remark}

\begin{remark}
\normalfont The moment exponent $b$ $>$ $1/2$\ in ($a$) allows for moment
growth that is too large to support sub-exponentiality 
\citep[cf.][Proposition
2.7.1]{Vershynin2018}. In general thinner tails (smaller $b$) and faster
memory decay (larger $\lambda $) will imply a larger upper bound on
dimension $p_{n}$ in a Gaussian approximation theory. The lower bound $b$ $>$
$1/2$ simplifies technical arguments at otherwise no cost since, e.g.,
uniformly $a.s.$ bounded $\{x_{i,t},y_{t}\}$ satisfy $\max_{i,n,t}%
\{||x_{i,t}||_{r}\vee ||y_{t}||_{r}\}$ $\leq $ $K$ $\leq $ $ar^{1/2}$ for
any $a$ $\geq $ $K$ and all $r$ $\geq $ $1$.
\end{remark}

\begin{remark}
\normalfont($b$) is a standard identification condition. ($c$) implies $%
(1/n\sum_{t=1}^{n}\mathbf{x}_{i,t}\mathbf{x}_{i,t}^{\prime })^{-1}$ and $%
\mathcal{H}_{i}^{-1}$\ exist uniformly in $i$ (e.g. $\lim \inf_{n\rightarrow
\infty }\min_{1\leq i\leq p_{n}}\inf_{\lambda ^{\prime }\lambda =1}\lambda 
\mathcal{H}_{i}\lambda $ $>$ $0$). Non-degeneracy $\inf_{\lambda ^{\prime
}\lambda =1}\mathbb{E}(1/\sqrt{n}\sum_{t=1}^{n}\lambda ^{\prime }\mathcal{H}%
_{i}^{-1}\mathbf{x}_{i,t}v_{t}\lambda )^{2}$ $>$ $0$ in ($c$)\ is standard
under weak dependence, and required for ensuring non-degenerate asymptotics
when standardization is not necessarily exploited as done here. It holds
when $\boldsymbol{Z}_{n,i}$ $:=$ $[x_{i,1}v_{1},$ $...,$ $%
x_{i,n}v_{n}]^{\prime }$ satisfies a customary positive definiteness
property, $\lim \inf_{n\rightarrow \infty }\inf_{\lambda ^{\prime }\lambda
=1}\mathbb{E}(\lambda ^{\prime }\boldsymbol{Z}_{n,i})^{2}$ $>$ $0$.
\end{remark}

\subsection{Main Results}

We develop a theory for $\max_{1\leq i\leq p_{n}}|\sqrt{n}\mathcal{W}_{i,n}%
\hat{\phi}_{i,n}|$, while identical arguments extend to the average $%
\sum_{i=1}^{p_{n}}|\sqrt{n}\mathcal{W}_{i,n}\hat{\phi}_{i,n}|$. We need two
preliminary results establishing a HD first-order non-Gaussian approximation
and a HD Gaussian approximation. First, the non-Gaussian approximation.

\begin{lemma}
\label{lm:mr_ng_phys}Let Assumption \ref{assum:marg-reg} hold with size $%
\lambda $ $\geq $ $1$, and let $H_{0}$ hold. Then for any $p_{n}$ $%
\rightarrow $ $\infty $ such that $\ln (p_{n})$ $=$ $o(n^{1/4})$,%
\begin{equation*}
\left\vert \max_{1\leq i\leq p_{n}}\left\vert \sqrt{n}\hat{\phi}%
_{i,n}\right\vert -\max_{1\leq i\leq p_{n}}\left\vert \frac{1}{\sqrt{n}}%
\sum\nolimits_{t=1}^{n}\left[ 0,1\right] \mathcal{H}_{i}^{-1}\mathbf{x}%
_{i,t}v_{t}\right\vert \right\vert =O_{p}\left( \left( \ln (p_{n})\right)
^{2}/\sqrt{n}\right) =o_{p}(1).
\end{equation*}
\end{lemma}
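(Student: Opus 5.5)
Under $H_{0}$ we have $v_{i,t}=v_{t}=y_{t}-\mathbb{E}y_{t}$ and $\beta_{i}=[\delta,0]^{\prime}$ for every $i$, so exact least-squares algebra gives
\begin{equation*}
\hat{\beta}_{i,n}-\beta_{i}=\widehat{\mathcal{H}}_{i,n}^{-1}\frac{1}{n}\sum_{t=1}^{n}\mathbf{x}_{i,t}v_{t},
\qquad
\sqrt{n}\hat{\phi}_{i,n}=[0,1]\widehat{\mathcal{H}}_{i,n}^{-1}\frac{1}{\sqrt{n}}\sum_{t=1}^{n}\mathbf{x}_{i,t}v_{t}.
\end{equation*}
Since $|\max_{i}|a_{i}|-\max_{i}|b_{i}||\leq\max_{i}|a_{i}-b_{i}|$, it suffices to bound
\begin{equation*}
\max_{1\leq i\leq p_{n}}\left\vert[0,1]\left(\widehat{\mathcal{H}}_{i,n}^{-1}-\mathcal{H}_{i}^{-1}\right)\frac{1}{\sqrt{n}}\sum_{t=1}^{n}\mathbf{x}_{i,t}v_{t}\right\vert
\leq\max_{i}\left\Vert\widehat{\mathcal{H}}_{i,n}^{-1}-\mathcal{H}_{i}^{-1}\right\Vert\times\max_{i}\left\vert\frac{1}{\sqrt{n}}\sum_{t=1}^{n}\mathbf{x}_{i,t}v_{t}\right\vert_{2}.
\end{equation*}
The plan is to show that the first factor is $O_{p}(\ln(p_{n})/\sqrt{n})$ and the second is $O_{p}(\ln(p_{n}))$. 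Their product then gives the stated $O_{p}((\ln p_{n})^{2}/\sqrt{n})$, which is $o_{p}(1)$ because $\ln(p_{n})=o(n^{1/4})$.

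Both factors come from one HD maximal inequality for partial sums of centered $\mathcal{L}_{r}$-\emph{pd} arrays with moment growth $ar^{b}$. Under Assumption \ref{assum:marg-reg}(a), products such as $x_{i,t}v_{t}$, $x_{i,t}$ and $x_{i,t}^{2}-\mathbb{E}x_{i,t}^{2}$ are $\mathcal{L}_{r/2}$-\emph{pd}. Their physical dependence coefficients are bounded by Hölder's inequality, e.g.
\begin{equation*}
\Vert x_{i,t}y_{t}-x_{i,t}^{\prime}(m)y_{t}^{\prime}(m)\Vert_{r/2}\leq\Vert x_{i,t}\Vert_{r}\tilde{\theta}_{t}^{(r)}(m)+\Vert y_{t}\Vert_{r}\theta_{i,t}^{(r)}(m),
\end{equation*}
and they inherit size $\lambda$ and summable aggregates as in (\ref{PHI_z}). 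Note that $\mathbb{E}x_{i,t}v_{t}=0$ under $H_{0}$. For each $i$, Wu's (2005) martingale-approximation/Burkholder moment bound gives
\begin{equation*}
\left\Vert\frac{1}{\sqrt{n}}\sum_{t}(z_{i,t}-\mathbb{E}z_{i,t})\right\Vert_{q}\leq Kq^{1/2}\Theta^{(q)}\leq Kq^{1/2+2b}
\end{equation*}
uniformly in $i$ for all $q$. This polynomial-in-$q$ moment growth is a Bernstein/Orlicz-type tail: by Markov's inequality with $q$ optimized, $\mathbb{P}(|n^{-1/2}S_{i,n}|>u)\leq\exp\{-cu^{1/(1/2+2b)}\}$. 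A union bound over $p_{n}$ indices then yields
\begin{equation*}
\max_{i}\left|\frac{1}{\sqrt{n}}S_{i,n}\right|=O_{p}\left((\ln p_{n})^{1/2+2b}\right).
\end{equation*}

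To land exactly on the $(\ln p_{n})^{2}$ rate I would instead use the finite-$r$ version, taking the maximum over $q\asymp\ln p_{n}$. This gives $\mathbb{E}\max_{i}|\cdot|\leq p_{n}^{1/q}\max_{i}\Vert\cdot\Vert_{q}$, which is $O(\ln p_{n})$ up to constants after absorbing the moment growth. Matching this power precisely to the stated rate is where I expect the main obstacle: the bookkeeping of how $b$ and $\lambda$ enter. I would first try a truncation argument, splitting each summand at a level $\tau_{n}$ and applying Bernstein's inequality for $\mathcal{L}_{r}$-\emph{pd} bounded sums to the truncated part, with the tail part controlled by $r$-th moments. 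The truncation level must be chosen so that the restriction $\ln p_{n}=o(n^{1/4})$ is exactly what makes the remainder negligible.

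For the Hessian factor, apply the same inequality to the entries $x_{i,t}-\mathbb{E}x_{i,t}$ and $x_{i,t}^{2}-\mathbb{E}x_{i,t}^{2}$. This gives
\begin{equation*}
\max_{i}\Vert\widehat{\mathcal{H}}_{i,n}-\mathcal{H}_{i}\Vert=O_{p}\left(\ln(p_{n})/\sqrt{n}\right)=o_{p}(1).
\end{equation*}
Assumption \ref{assum:marg-reg}(c) gives a uniform lower eigenvalue bound on $\mathcal{H}_{i}$. The identity
\begin{equation*}
\widehat{\mathcal{H}}_{i,n}^{-1}-\mathcal{H}_{i}^{-1}=\widehat{\mathcal{H}}_{i,n}^{-1}\left(\mathcal{H}_{i}-\widehat{\mathcal{H}}_{i,n}\right)\mathcal{H}_{i}^{-1}
\end{equation*}
then shows that $\max_{i}\Vert\widehat{\mathcal{H}}_{i,n}^{-1}\Vert=O_{p}(1)$ and that the inverse difference inherits the $\ln(p_{n})/\sqrt{n}$ rate. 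Combining the two factors completes the argument.
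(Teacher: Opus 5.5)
Your skeleton matches the paper's proof. That means the exact least-squares identity $\sqrt{n}\hat{\phi}_{i,n}=[0,1]\widehat{\mathcal{H}}_{i,n}^{-1}n^{-1/2}\sum_t\mathbf{x}_{i,t}v_t$ under $H_0$, the reduction $|\max_i|a_i|-\max_i|b_i||\le\max_i|a_i-b_i|$, and the product bound with target rates $\ln(p_n)/\sqrt{n}$ and $\ln(p_n)$. The paper gets both rates in one line by citing its supplemental max-LLN (Lemma B.4) for $x_{i,t}^2$ and $x_{i,t}v_t$, after checking its moment-growth hypothesis with $\alpha=2/(2b+1)$.

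The genuine gap is your substitute for that lemma, at exactly the step you flagged. Your calculation is correct as far as it goes. Since $\|x_{i,t}v_t\|_q\le\|x_{i,t}\|_{2q}\|v_t\|_{2q}$, the product's aggregate coefficient grows like $q^{2b}$. Wu's bound then gives $\max_i\|n^{-1/2}S_{i,n}\|_q\le Kq^{1/2+2b}$, and the union bound yields only $O_p((\ln p_n)^{1/2+2b})$, with $1/2+2b>3/2$ because $b>1/2$.

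The proposed repair does not change this. With $q\asymp\ln p_n$ the factor $p_n^{1/q}$ is $O(1)$, but $\max_i\|\cdot\|_q\le Kq^{1/2+2b}\asymp(\ln p_n)^{1/2+2b}$. The moment growth is precisely what sets the rate and cannot be absorbed into constants; you would need $1/2+2b\le 1$, i.e.\ $b\le 1/4$. The Hessian entry $x_{i,t}^2-\mathbb{E}x_{i,t}^2$ has the same $q^{2b}$ growth. So the argument as written proves only $O_p((\ln p_n)^{1+4b}/\sqrt{n})$. That is $o_p(1)$ only under $\ln p_n=o(n^{1/(2+8b)})$, a strictly stronger restriction than the lemma's $\ln p_n=o(n^{1/4})$.

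What is missing is a maximal inequality that is linear in $\ln p_n$:
\[
\max_{i\le p_n}\Bigl|n^{-1}\sum_t(z_{i,t}-\mathbb{E}z_{i,t})\Bigr|=O_p\bigl(\ln(p_n)/\sqrt{n}\bigr)
\quad\text{for } z_{i,t}\in\{x_{i,t},x_{i,t}^2,x_{i,t}v_t\}.
\]
A union bound over $n$-free tails of the form $\exp\{-cu^{\alpha}\}$ with $\alpha<1$ cannot deliver this. You need a bound that exploits $n$, namely a Bernstein-type inequality with a Gaussian regime plus a range term.

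If you do not cite such a result, as the paper does, the truncation-plus-Bernstein route has to be carried out in full:
\begin{enumerate}
\item Choose a truncation level $\tau_n$ so that $\mathbb{P}(\max_{i,t}|z_{i,t}|>\tau_n)\to 0$ and $\sqrt{n}\max_i\mathbb{E}|z_{i,t}|\mathcal{I}_{|z_{i,t}|>\tau_n}=o(\ln p_n)$.
\item Prove an exponential inequality for the truncated $\mathcal{L}_r$-\emph{pd} partial sums, allowing only polynomially decaying coefficients.
\item Check that the resulting range term, of order $\tau_n\ln(p_n)/\sqrt{n}$ up to dependence adjustments, remains $O(\ln p_n)$ when $\ln p_n=o(n^{1/4})$.
\end{enumerate}
As it stands, the proposal only names this step. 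The remaining pieces are fine and coincide with the paper: the resolvent identity for $\widehat{\mathcal{H}}_{i,n}^{-1}-\mathcal{H}_i^{-1}$, the uniform eigenvalue bound from Assumption \ref{assum:marg-reg}.c, and $\mathbb{E}\mathbf{x}_{i,t}v_t=0$ under $H_0$.
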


Next, a Gaussian approximation for $\max_{1\leq i\leq p_{n}}|1/\sqrt{n}%
\sum_{t=1}^{n}\left[ 0,1\right] \mathcal{H}_{i}^{-1}\mathbf{x}_{i,t}v_{t}|$.
Define a long-run variance, which is uniformly bounded from below by
Assumption \ref{assum:marg-reg}.c: 
\begin{equation*}
\sigma _{n}^{2}(i):=\mathbb{E}\left( \frac{1}{\sqrt{n}}\sum%
\nolimits_{t=1}^{n}\left[ 0,1\right] \mathcal{H}_{i}^{-1}\mathbf{x}%
_{i,t}v_{t}\right) ^{2}>0\text{ uniformly in }i.
\end{equation*}%
In the proof of the Lemma \ref{lm:mr_gauss_phys} we also show it is
uniformly bounded from above under $\mathcal{L}_{r}$-\emph{pd},\emph{\ }$%
\max_{1\leq i\leq p_{n}}\sigma _{n}^{2}(i)$ $=$ $O(1)$\emph{. }Now let $\{%
\mathcal{Z}_{n}(i)$ $:$ $1$ $\leq $ $i$ $\leq $ $p_{n}\}_{n\geq 1}$ be a
Gaussian array, $\mathcal{Z}_{n}(i)$ $\sim $ $\mathcal{N}(0,\sigma
_{n}^{2}(i))$. Define the Kolmogorov distance for the Gaussian approximation,%
\begin{equation*}
\rho _{n}:=\sup_{c\geq 0}\left\vert \mathbb{P}\left( \left\vert \max_{1\leq
i\leq p_{n}}\left\vert \frac{1}{\sqrt{n}}\sum\nolimits_{t=1}^{n}\left[ 0,1%
\right] \mathcal{H}_{i}^{-1}\mathbf{x}_{i,t}v_{t}\right\vert \right\vert
>c\right) -\mathbb{P}\left( \left\vert \max_{1\leq i\leq p_{n}}\left\vert 
\mathcal{Z}_{n}(i)\right\vert \right\vert >c\right) \right\vert .
\end{equation*}

\begin{lemma}
\label{lm:mr_gauss_phys}Let Assumption \ref{assum:marg-reg} hold with size $%
\lambda $ $\geq $ $2$.\medskip \newline
$a$. $\rho _{n}$ $\rightarrow $ $0$ for any $\{p_{n}\}$ satisfying $\ln
(p_{n})$ $=$ $o(n^{\frac{\lambda }{8+2\lambda }\frac{1}{(7/6)\vee (1+b)}})$%
.\medskip \newline
$b$. There exists a Gaussian process $\{\mathcal{Z}(i)\}$, $\mathcal{Z}(i)$ $%
\sim $ $\mathcal{N}(0,\lim_{n\rightarrow \infty }\sigma _{n}^{2}(i))$,
satisfying%
\begin{equation*}
\max_{1\leq i\leq p_{n}}\left\vert \frac{1}{\sqrt{n}}\sum\nolimits_{t=1}^{n}%
\left[ 0,1\right] \mathcal{H}_{i}^{-1}\mathbf{x}_{i,t}v_{t}\right\vert 
\overset{d}{\rightarrow }\max_{i\in \mathbb{N}}\left\vert \mathcal{Z}%
(i)\right\vert .
\end{equation*}
\end{lemma}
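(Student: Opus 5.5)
Write $\mathcal{S}_{n}(i):=n^{-1/2}\sum_{t=1}^{n}[0,1]\mathcal{H}_{i}^{-1}\mathbf{x}_{i,t}v_{t}$. For part ($a$) the plan is to invoke a high dimensional Gaussian approximation for maxima of sums of physically dependent vectors, of the Chernozhukov--Chetverikov--Kato type extended to dependence by Zhang and Wu. The first step is to verify that the scalar summands $z_{i,t}:=[0,1]\mathcal{H}_{i}^{-1}\mathbf{x}_{i,t}v_{t}$ inherit $\mathcal{L}_{r/2}$-\emph{pd} from $x_{i,t}$ and $y_{t}$. Under $H_{0}$ we have $v_{t}=y_{t}-\mathbb{E}y_{t}$, and $\mathcal{H}_{i}^{-1}$ is uniformly bounded by Assumption \ref{assum:marg-reg}.c. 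Writing $x_{i,t}v_{t}-x_{i,t}^{\prime}(m)v_{t}^{\prime}(m)=(x_{i,t}-x_{i,t}^{\prime}(m))v_{t}+x_{i,t}^{\prime}(m)(v_{t}-v_{t}^{\prime}(m))$ and applying H\"{o}lder gives coefficients bounded by $K\max\{\|x_{i,t}\|_{r}\vee\|y_{t}\|_{r}\}\,\psi_{i,m}$, uniformly in $i$. The moment bound $ar^{b}$ then yields $\|z_{i,t}\|_{q}\leq K q^{2b}$ for all $q$, which is the sub-Weibull control (tail exponent $1/(2b)$, or $1/(1+b)$ after truncation bookkeeping) required by those results.

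The same coefficient bound also shows $\max_{i}\sigma_{n}^{2}(i)=O(1)$. By the standard projection (martingale difference) decomposition $\mathcal{P}_{t-m}z_{i,t}$, we have $\|\mathcal{S}_{n}(i)\|_{2}\leq\sum_{m}\sup_{t}\theta^{(2)}_{z,i,t}(m)<\infty$ because $\lambda\geq 2>1$. Assumption \ref{assum:marg-reg}.c supplies the uniform lower bound. With variances bounded above and below, the Gaussian comparison and anti-concentration inequalities (Nazarov) apply.

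The main step is the blocking argument. I would split $\{1,\dots,n\}$ into big blocks of length $L_{n}$ separated by small blocks of length $l_{n}$. Each $z_{i,t}$ is replaced by its $l_{n}$-dependent approximation $\mathbb{E}[z_{i,t}|\epsilon_{t},\dots,\epsilon_{t-l_{n}}]$. The approximation error in the max is controlled by a Bernstein/Nagaev-type maximal inequality for physically dependent sums combined with a union bound over $p_{n}$. This gives an error of order $\sqrt{\ln p_{n}}\sum_{m>l_{n}}m^{-\lambda}\approx\sqrt{\ln p_{n}}\,l_{n}^{1-\lambda}$. The small-block sums are shown negligible in the same way. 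The big-block sums are then independent, and the independent-case HD CLT gives a Kolmogorov error of order $(B_{n}^{2}\ln^{7}(p_{n}n)/N_{n})^{1/6}$, where $N_{n}=n/(L_{n}+l_{n})$ and $B_{n}$ is the Orlicz envelope of the normalized block sums. The envelope grows like $(\ln p_{n})^{1+b}$ from the tail exponent; this is where the exponent $(7/6)\vee(1+b)$ arises. Finally, I would choose $l_{n}\asymp n^{\gamma}$ and $L_{n}$ to balance the polynomial dependence remainder $l_{n}^{-\lambda}$-type terms against the block count. This should produce the rate $n^{\lambda/(8+2\lambda)}$, so $\ln(p_{n})=o(n^{\frac{\lambda}{8+2\lambda}\frac{1}{(7/6)\vee(1+b)}})$ makes every term vanish. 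The error terms are converted into Kolmogorov distance via anti-concentration of $\max_{i}|\mathcal{Z}_{n}(i)|$. I expect this step, the rate bookkeeping with sub-Weibull tails and heterogeneous $d_{i,t}$, to be the main obstacle. I would also rely on second order stationarity to keep the block covariances comparable to $\sigma_{n}^{2}(i)$.

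For part ($b$), second order stationarity together with summable autocovariances gives $\sigma_{n}^{2}(i)\to\sigma^{2}(i)$ and likewise for cross-covariances $\mathbb{E}\mathcal{S}_{n}(i)\mathcal{S}_{n}(j)$. This defines a centered Gaussian process $\{\mathcal{Z}(i)\}_{i\in\mathbb{N}}$ via Kolmogorov extension. Using ($a$), it suffices to show $\max_{i\leq p_{n}}|\mathcal{Z}_{n}(i)|\overset{d}{\rightarrow}\max_{i\in\mathbb{N}}|\mathcal{Z}(i)|$. For fixed $k$, the finite-dimensional convergence of the first $k$ coordinates follows from covariance convergence. The Gaussian comparison inequality (Chernozhukov et al.), with covariance differences $\max_{i,j\leq p_{n}}|\mathrm{cov}_{n}-\mathrm{cov}|$ tending to zero sufficiently fast relative to $\ln p_{n}$, transfers the result to $p_{n}$ coordinates. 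The limit is then identified as $k\to\infty$ by monotonicity of the maximum. I would need to interpret $\max_{i\in\mathbb{N}}|\mathcal{Z}(i)|$ as an extended-valued limit if it is not a.s.\ finite.
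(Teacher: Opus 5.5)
Your part ($a$) has a genuine gap, located exactly where the content of the lemma lies. The growth condition $\ln(p_n)=o(n^{\frac{\lambda}{8+2\lambda}\frac{1}{(7/6)\vee(1+b)}})$ is asserted (``this should produce the rate $n^{\lambda/(8+2\lambda)}$'') rather than derived, and the mechanism you describe does not deliver it.

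\begin{itemize}
\item \emph{Logarithmic power.} In the independent-block bound $(B_n^2\ln^7(p_nn)/N_n)^{1/6}$, an envelope $B_n\asymp(\ln p_n)^{1+b}$ enters multiplicatively. That gives $(\ln p_n)^{(9+2b)/6}N_n^{-1/6}$, not a maximum $(\ln p_n)^{(7/6)\vee(1+b)}$.
\item \emph{Power of $n$.} Balance the three remainders you list: $l_n^{1-\lambda}$ (dependence truncation), $(l_n/L_n)^{1/2}$ (small blocks) and $(L_n/n)^{1/6}$ (independent blocks). At best this yields $n^{-(\lambda-1)/(8\lambda-7)}$, even before logarithms and before the comparison of block covariances with long-run covariances. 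That is strictly slower than $n^{-\lambda/(8+2\lambda)}$ for every $\lambda\geq 2$.
\item \emph{Tails.} Your own (correct) H\"older bound $\|z_{i,t}\|_q\le Kq^{2b}$ places $z_{i,t}$ in a $\psi_{1/(2b)}$ class. The phrase ``$1/(1+b)$ after truncation bookkeeping'' is not an argument.
\end{itemize}

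So the blocking route, as sketched, would at best give a Gaussian approximation under a more restrictive bound on $p_n$ than the one stated.

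Your first sentence had the right idea, and it is what the paper does. The paper never blocks; it applies Theorem 3(ii) of Chang, Chen and Wu (2024) directly, after three checks:
\begin{itemize}
\item their Condition 1, via $\max_{i,n,t}\mathbb{E}\exp\{|z_{i,t}|^{\zeta_1}\}\le\varpi$, obtained from the tail bound behind Lemma B.2.b;
\item their Condition 3, via Assumption \ref{assum:marg-reg}.c;
\item the Wu--Wu dependence-adjusted norms $\Psi_{n,r,\alpha}$ and $\Phi_{n,\psi_\nu,\alpha}$, computed with $\alpha=\lambda/2$ and $\nu=b$.
\end{itemize}
The exponent is then read off that theorem's three-term bound. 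There $(\ln p_n)^{7/6}$ and $(\ln p_n)^{1+\nu}$ sit in separate summands, and that, not an envelope, is the source of the ``$\vee$''.

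The bookkeeping is delicate even on the paper's route. In the paper's display the first two summands decay only like $n^{-\lambda/(24+6\lambda)}$. Also, $\nu=b$ rests on bounding $\|z_{i,t}\|_{r/2}$ by $\|x_{i,t}\|_r\vee\|y_t\|_r$, whereas your H\"older computation corresponds to $\nu=2b$.

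For part ($b$), your route is more explicit than the paper's appeal to Gaussian laws being determined by two moments: covariance convergence, Kolmogorov extension, Gaussian comparison on the first $p_n$ coordinates, then a monotone limit in $k$. It still needs two inputs you have not secured.
\begin{itemize}
\item Second order stationarity of $(x_{i,t},y_t)$ does not make $z_{i,t}=x_{i,t}v_t$ covariance stationary. So the existence of $\lim_n\mathbb{E}\mathcal{S}_n(i)\mathcal{S}_n(j)$ must be shown or assumed.
\item The comparison needs $(\ln p_n)^2\max_{i,j\le p_n}|\mathbb{E}\mathcal{S}_n(i)\mathcal{S}_n(j)-\lim_m\mathbb{E}\mathcal{S}_m(i)\mathcal{S}_m(j)|\to0$. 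Assumption \ref{assum:marg-reg} does not supply such a uniform rate. For the analogous step in Lemma \ref{lm:mr_boot_gauss} the paper imposes Assumption \ref{assum_mr_boot}.c.
\end{itemize}

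Your caveat that $\max_{i\in\mathbb{N}}|\mathcal{Z}(i)|$ may be infinite is well taken. With independent coordinates and variances bounded below it is infinite almost surely, which bears on the statement itself.
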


\begin{remark}
\normalfont The bound on $p_{n}$ naturally depends on tail conditions (i.e. $%
b$) and memory decay (i.e. $\lambda $). The upper bound increases
monotonically in $\lambda $ and $1/b$: weaker dependence ($\lambda $ $%
\uparrow $) and/or thinner tails ($b$ $\downarrow $) permit a larger/faster
dimension growth. For example, as $\lambda $ $\rightarrow $ $\infty $
(geometric memory, e.g. independence) and $b$ $=$ $1/6$ (sub-exponential
tails) we have $\ln (p_{n})$ $=$ $o(n^{3/7})$. Under weak memory decay and
\textquotedblleft heavier" tails, however, the bound on $p_{n}$ remains
exponential, but precipitously shrinks. Suppose $\lambda $ $=$ $4$
(hyperbolic memory) and $b$ $=$ $2$ (non-sub-exponential tails). Then $\ln
(p_{n})$ $=$ $o(n^{1/12})$, which shrinks to $\ln (p_{n})$ $=$ $o(n^{1/36})$
when $\lambda $ $=$ $2$ and $b$ $=$ $5$.
\end{remark}

Lemmas \ref{lm:mr_ng_phys} and \ref{lm:mr_gauss_phys} yield the following
main result. The complexity of the upper bound on $p_{n}$\ arises from
comparing $\ln (p_{n})$ $=$ $o(n^{1/4})$ and $\ln (p_{n})$ $=$ $o(n^{\frac{%
\lambda }{8+2\lambda }\frac{1}{(7/6)\vee (1+b)}})$ in non-Gaussian and
Gaussian approximations Lemmas \ref{lm:mr_ng_phys} and \ref{lm:mr_gauss_phys}%
.

\begin{theorem}
\label{thm:mr_gauss_phys}Let Assumption \ref{assum:marg-reg} hold with size $%
\lambda $ $\geq $ $2$, and let $H_{0}$ hold. Let $\{\mathcal{W}_{i,n}\}$ be
positive weights satisfying $\max_{1\leq i\leq p_{n}}|\mathcal{W}_{i,n}$ $-$ 
$\mathcal{W}_{i}|$ $=$ $o_{p}(1/\ln (p_{n}))$ for some uniformly positive
and bounded non-stochastic sequence $\{\mathcal{W}_{i}\}$. Let $\{\mathcal{Z}%
(i)\}$ be the Lemma \ref{lm:mr_gauss_phys} Gaussian process $\mathcal{Z}(i)$ 
$\sim $ $\mathcal{N}(0,\lim_{n\rightarrow \infty }\sigma _{n}^{2}(i))$. Then 
$\max_{1\leq i\leq p_{n}}|\sqrt{n}\mathcal{W}_{i,n}\hat{\phi}_{i,n}|$ $%
\overset{d}{\rightarrow }$ $\max_{i\in \mathbb{N}}|\mathcal{W}_{i}\mathcal{Z}%
(i)|$ provided $\ln (p_{n})$ $=$ $o(n^{s(b,\lambda )})$, where by case%
\begin{equation}
\begin{array}{ll}
\text{if }b\in (0,1/6] & \text{then }s(b,\lambda )=\left\{ 
\begin{array}{ll}
\frac{1}{4} & \text{if }\lambda \geq \frac{28}{5} \\ 
\frac{\lambda }{8+2\lambda }\frac{1}{(7/6)\vee (1+b)} & \text{if }\lambda <%
\frac{28}{5}%
\end{array}%
\right. \\ 
\text{if }b\in (1/6,1) & \text{then }s(b,\lambda )=\left\{ 
\begin{array}{ll}
\frac{1}{4} & \text{if }\lambda \geq 4\frac{1+b}{1-b} \\ 
\frac{\lambda }{8+2\lambda }\frac{1}{(7/6)\vee (1+b)} & \text{if }\lambda <4%
\frac{1+b}{1-b}%
\end{array}%
\right. \\ 
\text{if }b\geq 1 & \text{then }s(b,\lambda )=\frac{\lambda }{8+2\lambda }%
\times \frac{1}{1+b}.%
\end{array}
\label{s(b,lam)}
\end{equation}
\end{theorem}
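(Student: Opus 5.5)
The plan is to combine Lemma \ref{lm:mr_ng_phys} and Lemma \ref{lm:mr_gauss_phys}, and then absorb the weights. Write $\mathcal{G}_{n}(i):=n^{-1/2}\sum_{t=1}^{n}[0,1]\mathcal{H}_{i}^{-1}\mathbf{x}_{i,t}v_{t}$. The argument has three steps.

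\textbf{Step 1 (rate bookkeeping).} The condition $\ln(p_{n})=o(n^{s(b,\lambda)})$ is shown to imply both $\ln(p_{n})=o(n^{1/4})$ and $\ln(p_{n})=o(n^{\frac{\lambda}{8+2\lambda}\frac{1}{(7/6)\vee(1+b)}})$, so that both lemmas apply. It suffices to check that $s(b,\lambda)$ equals the minimum of $1/4$ and the Gaussian rate.
\begin{itemize}
\item For $b\leq 1/6$ the Gaussian rate is $\frac{6\lambda}{7(8+2\lambda)}$. This is at least $1/4$ iff $24\lambda\geq 56+14\lambda$, i.e.\ $\lambda\geq 28/5$.
\item For $b\in(1/6,1)$ the Gaussian rate is $\frac{\lambda}{(8+2\lambda)(1+b)}\geq 1/4$ iff $4\lambda\geq(8+2\lambda)(1+b)$, i.e.\ $\lambda(1-b)\geq 4(1+b)$.
\item For $b\geq1$ the Gaussian rate is always below $1/2\cdot1/2=1/4$, since $\lambda/(8+2\lambda)<1/2$ and $1/(1+b)\leq1/2$.
\end{itemize}
This reproduces (\ref{s(b,lam)}) exactly.

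\textbf{Step 2 (unweighted limit).} Lemma \ref{lm:mr_ng_phys} gives
\begin{equation*}
\max_{i}|\sqrt{n}\hat{\phi}_{i,n}|=\max_{i}|\mathcal{G}_{n}(i)|+o_{p}(1).
\end{equation*}
I need the weighted version, so I would use the pointwise representation from that lemma's proof: $\max_{i}|\sqrt{n}\hat{\phi}_{i,n}-\mathcal{G}_{n}(i)|=O_{p}((\ln p_{n})^{2}/\sqrt{n})$. Uniform boundedness of $\mathcal{W}_{i}$ then yields
\begin{equation*}
\max_{i}|\mathcal{W}_{i}\sqrt{n}\hat{\phi}_{i,n}|=\max_{i}|\mathcal{W}_{i}\mathcal{G}_{n}(i)|+o_{p}(1).
\end{equation*}
Next I apply Lemma \ref{lm:mr_gauss_phys} to the rescaled array $\mathcal{W}_{i}\mathcal{G}_{n}(i)$. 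Since $\mathcal{W}_{i}$ is non-stochastic, uniformly positive and bounded, this is just another array of the same form: multiply $[0,1]$ by $\mathcal{W}_{i}$. Assumption \ref{assum:marg-reg} and the variance bounds are preserved, so part (b) gives $\max_{i}|\mathcal{W}_{i}\mathcal{G}_{n}(i)|\overset{d}{\rightarrow}\max_{i\in\mathbb{N}}|\mathcal{W}_{i}\mathcal{Z}(i)|$.

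\textbf{Step 3 (stochastic weights).} I bound
\begin{equation*}
\left|\max_{i}|\mathcal{W}_{i,n}\sqrt{n}\hat{\phi}_{i,n}|-\max_{i}|\mathcal{W}_{i}\sqrt{n}\hat{\phi}_{i,n}|\right|\leq\max_{i}|\mathcal{W}_{i,n}-\mathcal{W}_{i}|\cdot\max_{i}|\sqrt{n}\hat{\phi}_{i,n}|.
\end{equation*}
The key fact is $\max_{i}|\sqrt{n}\hat{\phi}_{i,n}|=O_{p}(\sqrt{\ln p_{n}})$. This follows from Step 2 and the Gaussian approximation: $\max_{i}|\mathcal{Z}_{n}(i)|=O_{p}(\sqrt{\ln p_{n}})$ because the variances $\sigma_{n}^{2}(i)$ are uniformly bounded, and $\rho_{n}\to0$ transfers this bound to $\max_{i}|\mathcal{G}_{n}(i)|$. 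Alternatively, Bernstein/Nemirovski-type maximal inequalities under $\mathcal{L}_{r}$-pd give the same bound directly. With the weight rate $o_{p}(1/\ln p_{n})$, the product is $o_{p}((\ln p_{n})^{-1/2})=o_{p}(1)$. Slutsky's theorem then finishes the proof.

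\textbf{Main obstacle.} The delicate point is the meaning of the limit $\max_{i\in\mathbb{N}}|\mathcal{W}_{i}\mathcal{Z}(i)|$, which is a.s.\ infinite unless the $\mathcal{Z}(i)$ are strongly dependent. I would interpret the convergence exactly as in Lemma \ref{lm:mr_gauss_phys}(b), inheriting its construction. Operationally, the statement I actually prove is
\begin{equation*}
\sup_{c}\left|\mathbb{P}\left(\max_{i}|\mathcal{W}_{i,n}\sqrt{n}\hat{\phi}_{i,n}|>c\right)-\mathbb{P}\left(\max_{i\leq p_{n}}|\mathcal{W}_{i}\mathcal{Z}_{n}(i)|>c\right)\right|\to0.
\end{equation*}
This requires converting the $o_{p}(1)$ perturbations into Kolmogorov distance via Gaussian anti-concentration (Nazarov's inequality). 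That conversion needs the perturbation to be $o_{p}(1/\sqrt{\ln p_{n}})$ rather than merely $o_{p}(1)$.
\begin{itemize}
\item For the weight term this holds: it is $o_{p}((\ln p_{n})^{-1/2})$ times a factor that is $O_{p}(1)$ after rescaling, which is the reason for the rate $1/\ln p_{n}$.
\item For the Lemma \ref{lm:mr_ng_phys} remainder we need $(\ln p_{n})^{5/2}/\sqrt{n}\to0$, which holds since $\ln p_{n}=o(n^{1/4})$ implies $(\ln p_{n})^{5/2}=o(n^{5/8})$. Strictly, $n^{5/8}$ exceeds $n^{1/2}$, so I would instead use $s\leq1/5$ where needed, or sharpen the remainder. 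This is the step I expect to be hardest.
\end{itemize}
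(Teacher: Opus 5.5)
Your Steps 1--3 are essentially the paper's proof: the weights and the Lemma~\ref{lm:mr_ng_phys} remainder are absorbed by the triangle inequality, Lemma~\ref{lm:mr_gauss_phys} is applied to the weighted array, Slutsky finishes, and $s(b,\lambda)$ is read off from $\frac{1}{4}\wedge\frac{\lambda}{8+2\lambda}\frac{1}{(7/6)\vee(1+b)}$ by the same case analysis (your $O_p(\sqrt{\ln p_n})$ bound on $\max_i|\sqrt{n}\hat{\phi}_{i,n}|$ is a harmless sharpening of the paper's $O_p(\ln p_n)$ from its Lemma B.4). The Kolmogorov-distance detour in your last paragraph is unnecessary, because the theorem only claims convergence in distribution to the Lemma~\ref{lm:mr_gauss_phys}(b) limit (any issue with that limit's meaning is inherited from the lemma) and $o_p(1)$ perturbations suffice for Slutsky; as you note, the anti-concentration route would need $\ln p_n=o(n^{1/5})$, so do not present it as what you prove under $s(b,\lambda)=1/4$.
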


\begin{remark}
\normalfont If $b$ $\leq $ $1/6$ then tails are sub-exponential %
\citep[cf.][Proposition 2.7.1]{Vershynin2018} and $\ln (p_{n})$ $=$ $%
o(n^{1/4})$ if memory decay is fast enough $\lambda $ $\geq $ $28/5$. If,
e.g., $b$ $<$ $1/6$ and $\lambda $ $=$ $4$ then $\ln (p_{n})$ $=$ $%
o(n^{3/14})$ a slower upper bound. In the intermediate range $b$ $\in $ $%
(1/6,1)$ tails are still sub-exponential, but $\ln (p_{n})$ $=$ $o(n^{1/4})$
when $\lambda $ $\geq $ $4/(\frac{2}{1+b}$ $-$ $1)$ $\searrow $ $28/5$ as $b$
$\searrow $ $1/6$: thinner tails allow for slower memory decay, a classic
trade-off. Finally, $b$ $\geq $ $1$ allows for non-sub-exponential tails,
yielding only $\ln (p_{n})$ $=$ $o(n^{\frac{\lambda }{8+2\lambda }\frac{1}{%
1+b}})$. If, e.g., $b$ $=$ $2$ then $\ln (p_{n})$ $=$ $o(n^{\frac{\lambda }{%
4+\lambda }\frac{1}{6}})$ where $\frac{\lambda }{4+\lambda }\frac{1}{6}$ $%
\searrow $ $\frac{1}{18}$ as $\lambda \searrow 2$ (far from
independence/geometric decay), while $\frac{\lambda }{4+\lambda }\frac{1}{6}$
$\nearrow $ $1/6$ as $\lambda $ $\rightarrow $ $\infty $
(independence/geometric decay). In the hairline case $b$ $=$ $1$ tails are
sub-exponential, and $\ln (p_{n})$ $=$ $o(n^{\frac{\lambda }{4+\lambda }%
\frac{1}{4}})$ where $\frac{\lambda }{4+\lambda }\frac{1}{4}$ $\searrow $ $%
\frac{1}{12}$ as memory decay slows $\lambda \searrow 2$, and $\frac{\lambda 
}{4+\lambda }\frac{1}{4}$ $\nearrow $ $1/4$ as $\lambda $ $\rightarrow $ $%
\infty $.
\end{remark}

\begin{remark}
\normalfont The weights must satisfy $\max_{1\leq i\leq p_{n}}|\mathcal{W}%
_{i,n}-\mathcal{W}_{i}|$ $=$ $o_{p}(1/\ln (p_{n}))$ in order for possibly
stochastic $\mathcal{W}_{i,n}$ not to affect the limit distribution of $%
\max_{1\leq i\leq p_{n}}|\sqrt{n}\mathcal{W}_{i,n}\hat{\phi}_{i,n}|$. $%
\mathcal{W}_{i}$ $>$ $0$ ensures the test has asymptotic power of one. $%
\mathcal{W}_{i}$ are non-random due to the Gaussian approximation theory;
otherwise $\mathcal{W}_{i}\mathcal{Z}(i)$ is not generally Gaussian.
\end{remark}

\begin{remark}
\normalfont\label{rem:hac}A max-t-statistic $\max_{1\leq i\leq p_{n}}|\sqrt{n%
}\hat{\phi}_{i,n}/se_{i,n}|$ may be preferred in practice to control for
estimator dispersion. This may be helpful when regressors have widely
different variances, or when the error is conditionally heteroscedastic.
Experimental evidence, however, suggests the added estimation error caused
by the standard least squares $se_{i,n}^{(1)}$ or HAC $se_{i,n}^{(2)}$\ may
impair test performance with over rejection of the null, even in an iid
setting, in particular when a HAC estimator is used %
\citep[see][]{HillMotegi2020,Hill_2025_maxtest,HillLi2025}. Only the latter 
\textit{guarantees} $\sqrt{n}\hat{\phi}_{i,n}/se_{i,n}$ $\overset{d}{%
\rightarrow }$ $\mathcal{N}(0,1)$ under $H_{0}$ and can be used for robust
high dimensional interval construction, while the former reduces complexity
and therefore computation time, and still yields a valid and consistent
test. The former $se_{i,n}^{(1)}$ is the classic least squares standard
error which is valid as a consistent estimator of $\mathbb{E}(\sqrt{n}\hat{%
\phi}_{i,n})^{2}$ in an iid homoscedastic setting. We find a max-t-test with 
$se_{i,n}^{(1)}$ works well in many settings, although never out-performs a
max-test. See \citet[Appendix C]{sm_sig_pred} for high dimensional HAC
estimator theory.
\end{remark}

\section{Bootstrap Theory\label{sec:boot_theory}}

In this section we show the dependent wild bootstrap p-value approximation $%
\mathcal{\hat{P}}_{n,\mathcal{M}}$ promotes a valid test (correct size
asymptotically), that is consistent (power converges to one as $n$ $%
\rightarrow $ $\infty $). Similar arguments extend to the parametric
bootstrap $\mathcal{\hat{P}}_{n,\mathcal{M}}^{(p)}$, and as before extend to
bootstrapped p-values based on the average statistic. We require a bound on
block size $b_{n}$ and cross-unit dependence between $\mathbf{x}_{i,t}v_{t}$
and $\mathbf{x}_{j,t+l}v_{t+l}$, where $v_{t}$ $:=$ $y_{t}$ $-$ $\mathbb{E}%
y_{t}$ are innovations from model (\ref{mr_model}) under $H_{0}$. Define%
\begin{equation*}
w_{i,t}:=\mathbf{x}_{i,t}v_{t}-\mathbb{E}\mathbf{x}_{i,t}v_{t}.
\end{equation*}%
Notice $\mathbb{E}\mathbf{x}_{i,t}v_{t}$ $=$ $0$ for all $i$ \textit{if and
only if} $H_{0}$ is true \citep[see][Theorem 2.1]{Hill_2025_maxtest}.

\begin{assumption}
\label{assum_mr_boot} $\ \ \medskip $\newline
$a.$ \textbf{(Block Size)} $b_{n}$ $\in $ $\{1,2,...,n-1\}$, $b_{n}$ $%
\rightarrow $ $\infty $ and $b_{n}$ $=$ $o(n^{1/3})$.$\medskip $\newline
$b.$ \textbf{(Cross-Unit Dependence)} $\max_{n,t}|\mathbb{E}%
w_{i,t}w_{j,t+l}| $ $\leq $ $\mathcal{C}(i,j)(l\vee 1)^{-2-\varpi }$ for
each $l$ $\in $ $\mathbb{N}$ and some $\varpi $ $>$ $0$, where $\max_{i,j}\{%
\mathcal{C}(i,j)\} $ $\leq $ $K$ $<$ $\infty $.$\medskip $\newline
$c.$ \textbf{(Cross-Unit Long-Run Dependence)} $\max_{1\leq i,j\leq
p_{n}}|1/n\sum_{s,t=1}^{n}$$\mathbb{E}w_{i,s}w_{j,t}$ $-$ $\mathfrak{E}_{n}|$
$=$ $O(1/n^{\varrho })$ for some $\varrho $ $>$ $0$ where $\mathfrak{E}_{n}$ 
$:=$ $\max_{i,j,n}|1/n\sum_{s,t=1}^{n}\mathbb{E}w_{i,s}w_{j,t}|$.
\end{assumption}

\begin{remark}
\normalfont Lemma \ref{lm:mr_boot_gauss}, below, delivers a multiplier
bootstrap Gaussian approximation provided $\ln (p_{n})$ $=$ $o(\sqrt{n}%
/b_{n}^{3/2})$. Thus $b_{n}$ $=$ $o(n^{1/3})$ in ($a$)\ is required to
ensure $p_{n}$ $\rightarrow $ $\infty $.
\end{remark}

\begin{remark}
\normalfont The cross-unit covariance bound $\max_{n,t}|\mathbb{E}%
w_{i,t}w_{j,t+l}|$ $\leq $ $\mathcal{C}(i,j)(l\vee 1)^{-2-\varpi }$ in ($b$)
with $b_{n}$ $\rightarrow $ $\infty $ is required to satisfy a
Gaussian-to-Gaussian comparison used to prove Lemma \ref{lm:mr_boot_gauss},
cf. \citet[Lemma
3.1]{Chernozhukov_etal2013} and \citet[Theorem 2]{Chernozhukov_etal2015}. It
is used to yield uniform block-wise and asymptotic covariance equivalence in
the sense 
\begin{equation*}
\mathfrak{D}_{n}:=\max_{1\leq i,j\leq p_{n}}\left\vert \frac{1}{n}%
\sum_{t_{1},t_{2}=1}^{n}\mathbb{E}w_{i,t_{1}}w_{j,t_{2}}-\frac{1}{n}%
\sum_{s=1}^{\mathcal{N}_{n}}\sum_{t_{1},t_{2}=(s-1)b_{n}+1}^{sb_{n}}\mathbb{E%
}w_{i,t_{1}}w_{j,t_{2}}\right\vert \rightarrow 0.
\end{equation*}%
Trivially $\mathfrak{D}_{n}$ $=$ $0$ under serial independence. In a time
series setting, therefore, ($b$) restricts cross-unit heterogeneity and
dependence while Assumption \ref{assum:marg-reg} restricts unit-wise serial
dependence.
\end{remark}

\begin{remark}
\normalfont($c$) is unavoidable for a Gaussian-to-Gaussian comparison
without a refinement, e.g., on ($b$). In the proof of Lemma \ref%
{lm:mr_boot_gauss} we show $\mathfrak{E}_{n}$ $:=$ $\max_{i,j,n}|1/n%
\sum_{s,t=1}^{n}\mathbb{E}w_{i,s}w_{j,t}|$ exists under ($b$), and that the
identity $\mathbb{E}w_{i,t}w_{j,t+l}$ $=$ $\mathcal{C}(i,j)_{\pm
}|s-t|^{-2-\omega }$ for some $\mathcal{C}(i,j)_{\pm }$ $\in $ $\mathbb{R}$,
a mild strengthening of ($b$), suffices for ($c$) with $\varrho $ $=$ $1$.
\end{remark}

\subsection{Bootstrap Validity}

The proof of HD bootstrap validity follows a standard path: we prove a
first-order bootstrap approximation, a bootstrap Gaussian approximation, and
then validity. Define%
\begin{equation*}
\mathcal{H}_{i}:=\mathbb{E}\mathbf{x}_{i,t}\mathbf{x}_{i,t}^{\prime }\text{
and }\mathcal{\tilde{G}}_{i,n}:=\frac{1}{n}\sum_{t=1}^{n}\eta _{t}\left\{ 
\mathbf{x}_{i,t}v_{t}-\mathbb{E}\mathbf{x}_{i,t}v_{t}\right\} \text{ with }%
v_{t}:=y_{t}-\mathbb{E}y_{t}.
\end{equation*}

\begin{lemma}
\label{lm:mr_boot_expand}Let Assumptions \ref{assum:marg-reg} and \ref%
{assum_mr_boot} hold with $r$ $\geq $ $4$ and size $\lambda $ $\geq $ $1$.
For any $\{p_{n}\}$ satisfying $\ln (p_{n})$ $=$ $o(n^{1/8}b_{n}^{1/8}/\sqrt{%
\ln (n)})$,%
\begin{equation*}
\left\vert \max_{1\leq i\leq p_{n}}\left\vert \sqrt{n}\left[ 0,1\right] 
\widehat{\mathcal{H}}_{i,n}^{-1}\widehat{\mathcal{\tilde{G}}}%
_{i,n}\right\vert -\max_{1\leq i\leq p_{n}}\left\vert \sqrt{n}\left[ 0,1%
\right] \mathcal{H}_{i}^{-1}\mathcal{\tilde{G}}_{i,n}\right\vert \right\vert
=O_{p}\left( \frac{\ln (p_{n})\times \ln (p_{n}n)}{n^{1/4}b_{n}^{1/4}}%
\right) =o_{p}\left( 1\right) .
\end{equation*}
\end{lemma}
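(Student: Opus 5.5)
Because $|\max_i|a_i|-\max_i|b_i||\le\max_i|a_i-b_i|$, the difference in the statement is at most $\max_{1\leq i\leq p_n}|\sqrt{n}[0,1](\widehat{\mathcal{H}}_{i,n}^{-1}\widehat{\mathcal{\tilde G}}_{i,n}-\mathcal{H}_i^{-1}\mathcal{\tilde G}_{i,n})|$. I will write
\begin{equation*}
\widehat{\mathcal{H}}_{i,n}^{-1}\widehat{\mathcal{\tilde G}}_{i,n}-\mathcal{H}_i^{-1}\mathcal{\tilde G}_{i,n}
=(\widehat{\mathcal{H}}_{i,n}^{-1}-\mathcal{H}_i^{-1})\widehat{\mathcal{\tilde G}}_{i,n}+\mathcal{H}_i^{-1}(\widehat{\mathcal{\tilde G}}_{i,n}-\mathcal{\tilde G}_{i,n})
\end{equation*}
and bound the two terms uniformly in $i$.

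\textbf{Step 1 (Hessian).} Under $\mathcal{L}_r$-\emph{pd} with the moment bound $ar^b$, a Bernstein/Nagaev-type inequality for physically dependent sums, as already used in the proof of Lemma \ref{lm:mr_ng_phys}, together with a union bound over $p_n$ gives $\max_i\|\widehat{\mathcal{H}}_{i,n}-\mathcal{H}_i\|=O_p(\ln(p_n)/\sqrt n)$. Since $\min_i$ eigenvalues of $\mathcal{H}_i$ are bounded away from zero (Assumption \ref{assum:marg-reg}.c), the same rate holds for $\max_i\|\widehat{\mathcal{H}}_{i,n}^{-1}-\mathcal{H}_i^{-1}\|$.

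\textbf{Step 2 (bootstrap score).} I will bound $\max_i|\sqrt n\,\widehat{\mathcal{\tilde G}}_{i,n}|$ and $\max_i|\sqrt n(\widehat{\mathcal{\tilde G}}_{i,n}-\mathcal{\tilde G}_{i,n})|$ conditionally on the data. Given the sample, $\sqrt n\,\widehat{\mathcal{\tilde G}}_{i,n}=\sum_{s=1}^{\mathcal{N}_n}\xi_s S_{i,s}/\sqrt n$ with block sums $S_{i,s}$. This is a sum of independent terms, so a conditional maximal inequality gives a bound of order $\sqrt{\ln p_n}\,\max_i(n^{-1}\sum_s S_{i,s}^2)^{1/2}$. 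That factor is $O_p(1)$ uniformly, because the block-sum second moments are bounded under size $\lambda\ge1$. The difference $\widehat{\mathcal{\tilde G}}_{i,n}-\mathcal{\tilde G}_{i,n}$ comes from three sources:
\begin{itemize}
\item replacing $\mathbb{E}y_t$ by $\bar y_n$, which gives a term $(\bar y_n-\mathbb{E}y)\,n^{-1}\sum_t\eta_t\mathbf{x}_{i,t}$ (centred);
\item replacing $\mathbb{E}\mathbf{x}_{i,t}v_t$ by its sample analogue;
\item the remainder block $\mathfrak{B}_{\mathcal{N}_n+1}$.
\end{itemize}
Each is a product of a data-based estimation error of order $O_p(\sqrt{\ln p_n/n})$ or $O_p(1/\sqrt n)$ and a multiplier sum $n^{-1}\sum_t\eta_t(\cdot)=\mathcal{N}_n^{-1/2}O_p(\sqrt{b_n\ln p_n/n}\cdot\ldots)$.

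\textbf{Main obstacle.} The delicate step is the multiplier term $\sum_s\xi_s\sum_{t\in\mathfrak{B}_s}(\mathbf{x}_{i,t}-\mathbb{E}\mathbf{x}_{i,t})$ and related block terms uniformly over $i$. The $\xi_s$ have only four moments, and the block sums are heavy-ish, of size up to $b_n\max_{i,t}|\mathbf{x}_{i,t}|$. I would first truncate, using $\max_{i,t}|x_{i,t}v_t|=O_p((\ln(p_nn))^{c})$ from the moment growth $ar^b$ via Markov with $r\asymp\ln(p_nn)$. On the truncation event I would apply a conditional Bernstein inequality to the block sums, and control $\max_s|\xi_s|$ by $O_p(\mathcal{N}_n^{1/4})$ from the fourth moment. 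This produces the $\ln(p_nn)/(n^{1/4}b_n^{1/4})$ factor: $(\mathcal{N}_n)^{1/4}b_n^{1/2}/\sqrt n=(nb_n)^{-1/4}$. Multiplying by the $\ln(p_n)$ from the union bound and the Hessian step gives $O_p(\ln(p_n)\ln(p_nn)/(nb_n)^{1/4})$. Finally, $\ln(p_n)=o(n^{1/8}b_n^{1/8}/\sqrt{\ln n})$ makes this $o_p(1)$, since $\ln(p_nn)\le \ln p_n+\ln n$ and both $(\ln p_n)^2$ and $\ln p_n\ln n$ are $o((nb_n)^{1/4})$. Combining Steps 1–2, the Step 1 term is $O_p(\ln p_n/\sqrt n)\cdot O_p(\sqrt{\ln p_n})$, which is of smaller order, and this yields the stated rate.
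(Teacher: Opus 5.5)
Your skeleton matches the paper's: the triangle inequality, $\max_i|\widehat{\mathcal{H}}_{i,n}^{-1}-\mathcal{H}_i^{-1}|=O_p(\ln(p_n)/\sqrt{n})$ from the max-LLN, the score discrepancy written as estimation errors times block-multiplier sums, and a maximal inequality over $i$ conditional on $\mathfrak{S}_n$. However, the rate bookkeeping in the step you call the main obstacle is wrong, and as written it does not deliver the lemma.

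\textbf{The arithmetic fails.} With $\mathcal{N}_n=n/b_n$ we have $\mathcal{N}_n^{1/4}b_n^{1/2}/\sqrt{n}=(b_n/n)^{1/4}$, not $(nb_n)^{-1/4}$. Taken literally, your final bound is $\ln(p_n)\ln(p_n n)(b_n/n)^{1/4}$. That is a factor $b_n^{1/2}$ above the claimed rate, and it is not $o_p(1)$ in general under $\ln(p_n)=o(n^{1/8}b_n^{1/8}/\sqrt{\ln(n)})$.

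\textbf{The object you bound is not small.} The quantity $\max_i|n^{-1/2}\sum_s\xi_s\sum_{t\in\mathfrak{B}_s}(x_{i,t}-\mathbb{E}x_{i,t})|$ behaves like a maximum of $p_n$ roughly Gaussian variables. Your $\max_s|\xi_s|$ piece is only the range term of a conditional Bernstein bound whose variance term is of order $\sqrt{\ln(p_n)}$ times a logarithm. The sum is negligible only through the prefactor it carries, which your accounting drops. In the appendix's scalar simplification, with $\mathbb{E}x_{i,t}v_t$ constant in $t$ by second order stationarity,
\begin{equation*}
\sqrt{n}\big(\widehat{\mathcal{\tilde{G}}}_{i,n}-\mathcal{\tilde{G}}_{i,n}\big)=-\Big(\frac{1}{\sqrt{n}}\sum_{t=1}^{n}\eta_{t}\Big)\frac{1}{n}\sum_{r=1}^{n}\big(x_{i,r}v_{r}-\mathbb{E}x_{i,r}v_{r}\big)-\big(\bar{y}_{n}-\mathbb{E}y_{t}\big)\frac{1}{\sqrt{n}}\sum_{t=1}^{n}\eta_{t}\big(x_{i,t}-\bar{x}_{i,n}\big).
\end{equation*}
So the multiplier sum always appears multiplied by $\bar{y}_n-\mathbb{E}y_t=O_p(n^{-1/2})$, and in your Hessian term by $O_p(\ln(p_n)/\sqrt{n})$. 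The incomplete block is not a separate source of discrepancy. Your last sentence borrows a $\ln(p_n)$ ``from the Hessian step'' right after calling that term of smaller order, which reflects the same confusion.

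\textbf{The repair.} It is routine and gives more than the lemma states. Set $\omega_{i,n,s}:=b_n^{-1/2}\sum_{t\in\mathfrak{B}_s}(x_{i,t}-\mathbb{E}x_{i,t})$, so that $n^{-1/2}\sum_t\eta_t(x_{i,t}-\mathbb{E}x_{i,t})=\mathcal{N}_n^{-1/2}\sum_s\xi_s\omega_{i,n,s}$.
\begin{itemize}
\item Nemirovski's moment inequality, applied conditionally on $\mathfrak{S}_n$, needs only $\mathbb{E}\xi_s^2=1$. It bounds the maximum over $i$ by $O_p(\sqrt{\ln(p_n)}\max_{i,s}|\omega_{i,n,s}|)$, with no truncation and no control of $\max_s|\xi_s|$.
\item Stacking the $p_n\mathcal{N}_n$ block sums as the paper does gives $\max_{i,s}|\omega_{i,n,s}|=O_p(\ln(p_n\mathcal{N}_n))$. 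The same argument handles $\max_i|\sqrt{n}\mathcal{\tilde{G}}_{i,n}|$.
\item This replaces two unsupported claims in your Step 2. First, bounded block second moments alone do not make $\max_i(n^{-1}\sum_sS_{i,s}^2)^{1/2}=O_p(1)$ uniformly over $p_n\rightarrow\infty$ indices. Second, $\sqrt{\ln(p_n)}\max_i(\cdot)^{1/2}$ is a sub-Gaussian bound; with four-moment multipliers Nemirovski's inequality puts the maximum over $i$ inside the sum over $s$.
\item Note that $n^{-1/2}\sum_t\eta_t=\sqrt{b_n}\,\mathcal{N}_n^{-1/2}\sum_s\xi_s=O_p(\sqrt{b_n})$, not $O_p(1)$.
\end{itemize}
Collecting terms gives $O_p\big((\ln p_n)^{3/2}\ln(p_n n)/\sqrt{n}+\sqrt{b_n}\ln(p_n)/\sqrt{n}\big)$. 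This is $o$ of the stated rate given $b_n=o(n^{1/3})$ and the growth condition on $\ln(p_n)$.

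The paper lands on exactly $(nb_n)^{-1/4}$ only because it uses the cruder bound $\mathcal{N}_n^{1/4}\max_{i,s}|\omega_{i,n,s}|$ inside Nemirovski's inequality. That gives $\max_i|\sqrt{n}\mathcal{\tilde{G}}_{i,n}|=O_p((n/b_n)^{1/4}\ln(p_n\mathcal{N}_n))$. Its leading term is then the Hessian error $O_p(\ln(p_n)/\sqrt{n})$ times this, not the score-discrepancy term you single out.
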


\begin{remark}
\normalfont If independence were known then block size $b_{n}$ $=$ $1$ and
an intrinsically different bounding argument would be used to yield $\ln
(p_{n})$ $=$ $o(n^{1/4})$ as in Lemma \ref{lm:mr_ng_phys}. The above result,
however, is generic, holding broadly for $\mathcal{L}_{r}$-\emph{pd}
sequences. The trade-off naturally is a smaller bound on $p_{n}$ that
depends on block size $b_{n}$. The bound $\ln (p_{n})$ $=$ $%
o(n^{1/8}b_{n}^{1/8}/\sqrt{\ln (n)})$ holds when block size $b_{n}$ $\simeq $
$n^{\zeta }$ for $\zeta $ $\in $ $(0,1)$, and $\ln (p_{n})=O(n^{b})$ with $b$
$\in $ $(0,(1$ $+$ $\zeta )/8)$: when blocks are used, $p_{n}$ can be
monotonically larger as the block size is larger, with $\ln (p_{n})$ $=$ $%
o(n^{1/4}/\sqrt{\ln (n)})$ as $b_{n}$ $\nearrow $ $n$. Since $b_{n}$ $=$ $%
o(n^{1/3})$ under Assumption \ref{assum_mr_boot}.a, clearly $\ln (p_{n})$ $=$
$o(n^{1/6}/\sqrt{\ln (n)})$ as $b_{n}$ $\nearrow $ $n^{1/3}$.
\end{remark}

Now define the sample $\mathfrak{S}_{n}$ $:=$ $\{x_{t},y_{t}\}_{t=1}^{n}$.
Recall the Lemma \ref{lm:mr_gauss_phys} Gaussian processes $\{\mathcal{Z}%
_{n}(i),\mathcal{Z}(i)$ $:$ $1$ $\leq $ $i$ $\leq $ $p_{n}\}$, with $%
\mathcal{Z}_{n}(i)$ $\sim $ $\mathcal{N}(0,\sigma _{n}^{2}(i))$ and $%
\mathcal{Z}(i)$ $\sim $ $\mathcal{N}(0,\lim_{n\rightarrow \infty }\sigma
_{n}^{2}(i))$. Let $\{\mathcal{\tilde{Z}}_{n}(i),\mathcal{\tilde{Z}}(i)$ $:$ 
$1$ $\leq $ $i$ $\leq $ $p_{n}\}$ be an independent copy of $\{\mathcal{Z}%
_{n}(i),\mathcal{Z}(i)$ $:$ $1$ $\leq $ $i$ $\leq $ $p_{n}\}$, independent
of the asymptotic draw $\{x_{t},y_{t}\}_{t=1}^{\infty }$, and let $%
\Rightarrow ^{p}$ denote \textit{weak convergence in probability} (Gin\'{e}
and Zinn, \citeyear{GineZinn1990}: Section 3).\footnote{$\mathcal{Z}_{n}$ $%
\Rightarrow ^{p}$ $\mathcal{Z}$ implies, conditional on the sample $%
\{x_{t},y_{t}\}_{t=1}^{n}$, $\mathcal{Z}_{n}$ convergences to $\mathcal{Z}$
in distribution \textit{asymptotically with probability approaching one}
with respect to the asymptotic draw $\{x_{t},y_{t}\}_{t=1}^{\infty }$.}

We have the following conditional multiplier bootstrap central limit theorem.

\begin{lemma}
\label{lm:mr_boot_gauss}Let Assumptions \ref{assum:marg-reg} and \ref%
{assum_mr_boot} hold with $r$ $\geq $ $8$ and size $\lambda $ $\geq $ $2$.
Let $\{p_{n}\}$ be any integer sequence satisfying $\ln (p_{n})$ $=$ $o(%
\sqrt{n}/b_{n}^{3/2})$, let $b_{n}$ $=$ $o(n^{1/3})$, and let $\{\mathcal{W}%
_{i,n}\}$ be positive weights satisfying $\max_{1\leq i\leq p_{n}}|\mathcal{W%
}_{i,n}$ $-$ $\mathcal{W}_{i}|$ $=$ $o_{p}(1/\ln (p_{n}))$ for some
uniformly positive and bounded non-random sequence $\{\mathcal{W}_{i}\}$.$%
\medskip $\newline
$a$. $\sup_{c\geq 0}|\mathbb{P}(\max_{1\leq i\leq p_{n}}|\sqrt{n}\left[ 0,1%
\right] \mathcal{H}_{i}^{-1}\mathcal{\tilde{G}}_{i,n}|$ $>$ $c|\mathfrak{S}%
_{n})$ $-$ $\mathbb{P}(\max_{1\leq i\leq p_{n}}|\mathcal{\tilde{Z}}_{n}(i)|$ 
$>$ $c)|$ $\rightarrow $ $0.\medskip \newline
b.$ $\max_{1\leq i\leq p_{n}}|\sqrt{n}\left[ 0,1\right] \mathcal{H}_{i}^{-1}%
\mathcal{W}_{i,n}\mathcal{\tilde{G}}_{i,n}|$ $\Rightarrow ^{p}$ $\max_{i\in 
\mathbb{N}}|\mathcal{W}_{i}\mathcal{\tilde{Z}}(i)|$.
\end{lemma}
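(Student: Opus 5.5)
The proof will follow the standard high dimensional multiplier bootstrap route of \citet{Chernozhukov_etal2013,Chernozhukov_etal2015}. Write $\mathcal{X}_{i,n}:=\sqrt{n}[0,1]\mathcal{H}_{i}^{-1}\mathcal{\tilde{G}}_{i,n}$. Group by blocks and put $\mathcal{S}_{s,i}:=b_{n}^{-1/2}\sum_{t\in \mathfrak{B}_{s}}[0,1]\mathcal{H}_{i}^{-1}w_{i,t}$. Then
\[
\mathcal{X}_{i,n}=\sqrt{b_{n}/n}\sum_{s}\xi _{s}\mathcal{S}_{s,i},
\]
up to the negligible last partial block. If the $\xi _{s}$ were Gaussian, conditional on $\mathfrak{S}_{n}$ this vector would be exactly centered Gaussian with covariance
\[
\hat{\Sigma}_{n}(i,j):=\frac{b_{n}}{n}\sum_{s=1}^{\mathcal{N}_{n}}\mathcal{S}_{s,i}\mathcal{S}_{s,j}.
\]
For general $\xi _{s}$ with a finite fourth moment, I would first apply a conditional high dimensional CLT, namely a Lindeberg or Slepian interpolation conditional on $\mathfrak{S}_{n}$ over the $\mathcal{N}_{n}$ independent summands. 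This approximation costs a term of order $(b_{n}^{3}\ln (p_{n})^{7}/n)^{1/6}\times \max_{s,i}|\mathcal{S}_{s,i}|$-type moments. Under $\ln (p_{n})=o(\sqrt{n}/b_{n}^{3/2})$ I expect it to be $o_{p}(1)$ after controlling $\max_{s,i}|\mathcal{S}_{s,i}|$ by a union bound with $\mathcal{L}_{r}$ moments, $r\geq 8$.

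Second, I will apply a Gaussian-to-Gaussian comparison \citep[Lemma 3.1]{Chernozhukov_etal2013}. The Kolmogorov distance between $\max_{i}|N(0,\hat{\Sigma}_{n})|$ and $\max_{i}|\mathcal{\tilde{Z}}_{n}(i)|$ is bounded by $C\Delta _{n}^{1/3}(\ln p_{n})^{2/3}$, where
\[
\Delta _{n}:=\max_{i,j}|\hat{\Sigma}_{n}(i,j)-\Sigma _{n}(i,j)|
\]
and $\Sigma _{n}(i,j)=\frac{1}{n}\sum_{s,t}\mathbb{E}[0,1]\mathcal{H}_{i}^{-1}w_{i,s}w_{j,t}^{\prime }\mathcal{H}_{j}^{-1}[0,1]^{\prime }$. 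Applying this requires a diagonal lower bound, which comes from Assumption \ref{assum:marg-reg}.c, and an upper bound, which comes from the Lemma \ref{lm:mr_gauss_phys} argument. I split $\Delta _{n}\leq \mathfrak{D}_{n}+\max_{i,j}|\hat{\Sigma}_{n}-\mathbb{E}\hat{\Sigma}_{n}|$. The bias $\mathfrak{D}_{n}$ is the cross-block covariance mass; by Assumption \ref{assum_mr_boot}.b, summable decay $(l\vee 1)^{-2-\varpi }$ gives $\mathfrak{D}_{n}=O(1/b_{n})\rightarrow 0$ uniformly in $(i,j)$. Assumption \ref{assum_mr_boot}.c serves to pin the limit $\Sigma _{n}$ uniformly, so that the $\mathcal{\tilde{Z}}_{n}$ covariance is the right target.

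The main obstacle is the stochastic term $\max_{i,j}|\hat{\Sigma}_{n}(i,j)-\mathbb{E}\hat{\Sigma}_{n}(i,j)|$. It is an average of $\mathcal{N}_{n}$ block products that are dependent across blocks, and it must be $o_{p}((\ln p_{n})^{-2})$. I would handle it by a maximal inequality for $\mathcal{L}_{r/2}$-\emph{pd} products. First, products of $\mathcal{L}_{r}$-\emph{pd} variables are $\mathcal{L}_{r/2}$-\emph{pd}, which is where $r\geq 8$ is used for fourth-order products. Then apply a truncation plus Bernstein-type or Nagaev inequality for physically dependent arrays, with a union bound over $p_{n}^{2}$ pairs. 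The resulting rate should be of order $\sqrt{b_{n}\ln (p_{n})/n}$ times moment factors. Combined with the $(\ln p_{n})^{2/3}$ comparison factor, this is $o_{p}(1)$ under $\ln (p_{n})=o(\sqrt{n}/b_{n}^{3/2})$. Part (a) then follows by the triangle inequality over the two steps, holding with probability approaching one.

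For (b), write $\mathcal{W}_{i,n}\mathcal{X}_{i,n}=\mathcal{W}_{i}\mathcal{X}_{i,n}+(\mathcal{W}_{i,n}-\mathcal{W}_{i})\mathcal{X}_{i,n}$. Since $\max_{i}|\mathcal{X}_{i,n}|=O_{p}(\sqrt{\ln p_{n}})$ conditionally, by (a) and Gaussian maxima, the weight error contributes $o_{p}(1/\ln p_{n})\cdot O_{p}(\sqrt{\ln p_{n}})=o_{p}(1)$. Repeating (a) with the rescaled coordinates $\mathcal{W}_{i}\mathcal{X}_{i,n}$ is legitimate because the $\mathcal{W}_{i}$ are bounded and bounded away from zero. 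This gives conditional approximation by $\max_{i}|\mathcal{W}_{i}\mathcal{\tilde{Z}}_{n}(i)|$. Finally, as in Lemma \ref{lm:mr_gauss_phys}.b and Theorem \ref{thm:mr_gauss_phys}, I pass from $\mathcal{\tilde{Z}}_{n}$ to $\mathcal{\tilde{Z}}$ using $\sigma _{n}^{2}(i)\rightarrow \lim \sigma _{n}^{2}(i)$, which yields weak convergence in probability.
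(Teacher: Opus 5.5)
Your architecture is the paper's. You use block sums and the Gaussian-to-Gaussian comparison (Lemma 3.1 of Chernozhukov, Chetverikov and Kato, 2013). You split $\Delta_n$ into a stochastic block-product term (the paper's $\mathcal{D}_{2,n}$, handled by its pd max-LLN, Lemma B.3.b) and the cross-block bias $\mathfrak{D}_n=O(1/b_n)$ from Assumption \ref{assum_mr_boot}.b (the paper's $\mathcal{D}_{3,n}$). Part (b) then proceeds via Assumption \ref{assum_mr_boot}.c and the weight argument of Theorem \ref{thm:mr_gauss_phys}, as in the paper.

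There are two differences from the paper. First, your conditional covariance $\hat{\Sigma}_n$ is the correct one; the paper's extra term $\mathcal{D}_{1,n}$ arises only because it keeps $\xi_s^2$ inside a quantity conditional on $\mathfrak{S}_n$. Second, the paper has no counterpart to your Lindeberg step. It simply asserts the bootstrap vector is conditionally $\mathcal{N}(0,s_n^2)$, i.e.\ it tacitly uses Gaussian $\xi_s$, as in its simulations.

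Several of your rate claims do not follow from $\ln(p_n)=o(\sqrt{n}/b_n^{3/2})$. A single counterexample covers them: $b_n=\lfloor \ln n\rfloor$ and $\ln(p_n)=n^{1/5}$ satisfy every hypothesis of the lemma.

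(i) \emph{Lindeberg step.} Your error $(b_n^3(\ln p_n)^7/n)^{1/6}$ diverges in the example; in general it needs $\ln(p_n)=o((n/b_n^3)^{1/7})$. Bounding $\max_{s,i}|\mathcal{S}_{s,i}|$ by a union bound with $\mathcal{L}_r$ moments gives only $O_p((p_n\mathcal{N}_n)^{1/r})$, which is exponentially large for exponential $p_n$. You would need the sub-Weibull tails implied by $\|x_{i,t}\|_r\vee\|y_t\|_r\leq ar^b$. With $\xi_s$ having only four moments, a non-Gaussian multiplier CLT also carries a further polynomial-moment term. The clean fix is to take $\xi_s\sim\mathcal{N}(0,1)$ and drop the step, as the paper effectively does.

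(ii) \emph{Comparison step.} The comparison bound requires $\Delta_n(\ln p_n)^2\to 0$, not merely $\Delta_n\to 0$. So the bias $O(1/b_n)$ forces $(\ln p_n)^2=o(b_n)$, and your stochastic rate $\sqrt{b_n\ln(p_n)/n}$ forces $(\ln p_n)^5 b_n=o(n)$; both fail in the example. The paper's proof has the same hole. It only shows each $\mathcal{D}_{j,n}\to 0$, and its own cruder stochastic rate $b_n^{3/2}\ln(p_n)/\sqrt{n}$ (the source of the stated growth condition) would also need an extra $(\ln p_n)^2$. Either argument therefore needs an additional growth restriction. Your $\sqrt{b_n}$-normalized blocks would improve on the paper's rate if you justify the block-level Bernstein bound.

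(iii) \emph{Passage to the limit in (b).} Convergence of the marginal variances $\sigma_n^2(i)$ does not control the law of a maximum over $p_n\to\infty$ coordinates. You need a second application of the comparison lemma with $\max_{i,j}|\Sigma_n(i,j)-\lim_n\Sigma_n(i,j)|$ small uniformly, at a rate beating $(\ln p_n)^{-2}$. That is exactly where the paper invokes Assumption \ref{assum_mr_boot}.c. In (a) that assumption is not needed, since $\tilde{\mathcal{Z}}_n$ already carries the exact finite-$n$ covariance $\Sigma_n$.
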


\begin{remark}
\normalfont A Gaussian-to-Gaussian comparison exploited in the proof
requires convergence of block-wise and long run covariances between $(%
\mathbf{x}_{i,s}v_{s},\mathbf{x}_{j,t}v_{t})$. In an $\mathcal{L}_{r}$-\emph{%
pd} setting that necessitates at least an $8^{th}$ moment.
\end{remark}

\begin{remark}
\normalfont Contrary to Lemma \ref{lm:mr_boot_expand}, a Gaussian
approximation yields an inverse relationship between $p_{n}$ and $b_{n}$,
with $\ln (p_{n})$ $=$ $o(\sqrt{n})$ as $b_{n}$ $\searrow $ $1$, the iid
case.
\end{remark}

Now the main result. The bound on $p_{n}$ is complicated in view of Theorem %
\ref{thm:mr_gauss_phys} (for $\hat{\phi}_{i,n}$) and Lemmas \ref%
{lm:mr_boot_expand} and \ref{lm:mr_boot_gauss} (for $\sqrt{n}\left[ 0,1%
\right] \widehat{\mathcal{H}}_{i,n}^{-1}\widehat{\mathcal{\tilde{G}}}_{i,n}$%
). In the latter bootstrap lemmas, given block size bound $b_{n}$ $=$ $%
o(n^{1/3})$, if $b_{n}$ $=$ $O(n^{3/13}(\ln (n))^{4/13})$ then $\ln
(p_{n})=o(n^{1/8}b_{n}^{1/8}/\sqrt{\ln (n)})$ dominates from the first-order
approximation. If $b_{n}/[n^{3/13}$ $\times $ $(\ln (n))^{4/13}]$ $%
\rightarrow $ $\infty $, e.g. $b_{n}$ $\simeq $ $n^{1/4}$, then the Gaussian
approximation requirement $\ln (p_{n})$ $=$ $o(\sqrt{n}/b_{n}^{3/2})$
dominates. Now parse that with $\ln (p_{n})$ $=$ $o(n^{s(b,\lambda )})$ in
Theorem \ref{thm:mr_gauss_phys}. In order to simplify cases set $b_{n}$ $%
\simeq $ $n^{\rho },$ $\rho $ $\in $ $(0,1/3)$. The following bounds on $%
p_{n}$ net out the required parsing.

\begin{theorem}
\label{thm:mr:boot}Let Assumptions \ref{assum:marg-reg} and \ref%
{assum_mr_boot} hold with $r$ $\geq $ $8$ and size $\lambda $ $\geq $ $2$,
and let $\{\mathcal{M}_{n}\}$ be any sequence of positive integers, $%
\mathcal{M}_{n}$ $\rightarrow $ $\infty $. Let block size $b_{n}$ $\simeq $ $%
n^{\rho }$ with $\rho $ $\in $ $(0,1/3)$. Let $\{p_{n}\}$ be any sequence of
positive integers satisfying%
\begin{equation*}
\ln (p_{n})=\left\{ 
\begin{array}{ll}
o(n^{(\rho +1-\iota )/8\wedge s(b,\lambda )})\text{ for tiny }\iota >0 & 
\text{if }\rho \in (0,3/13] \\ 
o(n^{\{1/2-3\rho /2\}\wedge s(b,\lambda )}) & \text{if }\rho \in (3/13,1/3)%
\end{array}%
\right.
\end{equation*}%
where $s(b,\lambda )$ is characterized in (\ref{s(b,lam)}). Let weights $%
\mathcal{W}_{i,n}$ satisfy the properties stated in Lemma \ref%
{lm:mr_boot_gauss}. Then we have $a.$ $\mathbb{P}(\mathcal{\hat{P}}_{n,%
\mathcal{M}_{n}}$ $<$ $\alpha )$ $\rightarrow $ $\alpha $ under $H_{0}$; and 
$b.$ $\mathbb{P}(\mathcal{\hat{P}}_{n,\mathcal{M}_{n}}$ $<$ $\alpha )$ $%
\rightarrow $ $1$ under $H_{1}$.
\end{theorem}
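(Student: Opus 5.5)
The proof combines the results already established, in the usual two-step way for bootstrap validity. Step one treats the observed statistic and step two the bootstrap statistic. First we check that the stated rate on $\ln(p_n)$ satisfies every earlier requirement. With $b_n \simeq n^{\rho}$, the condition $\ln(p_n)=o(n^{(\rho+1-\iota)/8})$ implies $\ln(p_n)=o(n^{1/8}b_n^{1/8}/\sqrt{\ln n})$, since the tiny $\iota$ absorbs the $\sqrt{\ln n}$ factor. Similarly, $\ln(p_n)=o(n^{1/2-3\rho/2})$ is exactly $o(\sqrt n/b_n^{3/2})$. For $\rho\le 3/13$ we have $(\rho+1)/8 \le 1/2-3\rho/2$, so the first constraint is the binding one; for $\rho>3/13$ the second binds. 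The cap by $s(b,\lambda)$ delivers Theorem \ref{thm:mr_gauss_phys}. Hence Theorem \ref{thm:mr_gauss_phys} and Lemmas \ref{lm:mr_boot_expand} and \ref{lm:mr_boot_gauss} all apply simultaneously.

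\emph{Size.} Under $H_0$, Theorem \ref{thm:mr_gauss_phys} gives $\mathcal{\hat T}_n \overset{d}{\rightarrow} \max_i|\mathcal{W}_i\mathcal{Z}(i)|$. For the bootstrap draw, note that under $H_0$ we have $\mathbb{E}\mathbf{x}_{i,t}v_t=0$. We would then show that $\widehat{\mathcal{\tilde G}}_{i,n}$ differs from $\mathcal{\tilde G}_{i,n}$ only through the recentering and through $\bar y_n-\mathbb{E}y_t$; this step is inside Lemma \ref{lm:mr_boot_expand}. The weights are handled as in Theorem \ref{thm:mr_gauss_phys}: $\max_i|\mathcal{W}_{i,n}-\mathcal{W}_i|=o_p(1/\ln p_n)$, while $\max_i|\sqrt n[0,1]\mathcal{H}_i^{-1}\mathcal{\tilde G}_{i,n}|=O_p(\sqrt{\ln p_n})$ conditionally by the Gaussian approximation, so the product is negligible. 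Combining Lemmas \ref{lm:mr_boot_expand} and \ref{lm:mr_boot_gauss}.b gives $\mathcal{\tilde T}_n\Rightarrow^p \max_i|\mathcal{W}_i\mathcal{\tilde Z}(i)|$, which is the same law as the limit of $\mathcal{\hat T}_n$. That limit is continuous: the maximum of absolute values of non-degenerate Gaussians with uniformly positive variance has a continuous distribution, by an anti-concentration argument in the style of Chernozhukov et al. Let $\hat F_n$ be the conditional distribution function of $\mathcal{\tilde T}_n$. Then $\sup_c|\hat F_n(c)-F(c)|\to^p 0$ by P\'olya's theorem. Next, $\mathcal{\hat P}_{n,\mathcal{M}_n}=1-\hat F_n(\mathcal{\hat T}_n)+o_p(1)$, using $\mathcal{M}_n\to\infty$ and a Glivenko--Cantelli bound on the iid draws conditional on the sample. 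Therefore $\mathbb{P}(\mathcal{\hat P}_{n,\mathcal{M}_n}<\alpha)=\mathbb{P}(1-F(\mathcal{\hat T}_n)<\alpha)+o(1)\to\alpha$, since $F(\mathcal{\hat T}_n)\overset{d}{\to}$ Uniform$(0,1)$.

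\emph{Power.} Under $H_1$ some $i_0$ has $\phi_{i_0}\neq 0$. Because $\hat\phi_{i_0,n}\to^p\phi_{i_0}$ and $\mathcal{W}_{i_0}>0$, we get $\mathcal{\hat T}_n\ge|\mathcal{W}_{i_0,n}\sqrt n\hat\phi_{i_0,n}|\to^p\infty$. The key point is that the bootstrap statistic stays $O_p(\sqrt{\ln p_n})$ under $H_1$, which holds because of the recentering. Here $\widehat{\mathcal{\tilde G}}_{i,n}$ involves $\mathbf{x}_{i,t}(y_t-\bar y_n)$ centred at its sample mean, which mimics $w_{i,t}=\mathbf{x}_{i,t}v_t-\mathbb{E}\mathbf{x}_{i,t}v_t$. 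Lemmas \ref{lm:mr_boot_expand} and \ref{lm:mr_boot_gauss} are stated in terms of $\mathcal{\tilde G}_{i,n}$ built from $w_{i,t}$, so they should apply without $H_0$, provided their proofs do not use $H_0$. The orders must still be compared: $\mathcal{\hat T}_n \gtrsim \sqrt n$, while the bootstrap statistic is $O_p(\sqrt{\ln p_n})$, and $\ln p_n=o(n)$. Hence $\hat F_n(\mathcal{\hat T}_n)\to^p1$ and $\mathcal{\hat P}_{n,\mathcal{M}_n}\to^p0$.

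The main obstacle is confirming that the first-order bootstrap expansion (Lemma \ref{lm:mr_boot_expand}) holds under $H_1$. Under $H_1$ the term $\bar y_n-\mathbb{E}y_t$ and the nonzero means $\mathbb{E}\mathbf{x}_{i,t}v_t$ enter, and we must check that the centering removes them uniformly in $i$ at the stated rate. I would first redo the key bound of that lemma, replacing $v_t$ by $y_t-\mathbb{E}y_t$ without using the null. The argument would rely on Bernstein-type maximal inequalities for the $\mathcal{L}_r$-\emph{pd} arrays, which do not use the null.
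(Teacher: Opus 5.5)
Your proposal is correct and follows the paper's route. You use Theorem \ref{thm:mr_gauss_phys} for $\mathcal{\hat{T}}_{n}$, then Lemmas \ref{lm:mr_boot_expand} and \ref{lm:mr_boot_gauss} with the weight argument from Theorem \ref{thm:mr_gauss_phys} to get $\mathcal{\tilde{T}}_{n}\Rightarrow^{p}\max_{i\in\mathbb{N}}|\mathcal{W}_{i}\mathcal{\tilde{Z}}(i)|$, and finish with the standard bootstrap p-value argument. You spell that argument out (P\'{o}lya, Glivenko--Cantelli over the $\mathcal{M}_{n}$ draws, $F(\mathcal{\hat{T}}_{n})$ asymptotically uniform, and $\sqrt{n}$ versus $\sqrt{\ln(p_{n})}$ under $H_{1}$), where the paper only cites \cite{HillLi2025} and \cite{Hill_2025_maxtest}.

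The obstacle you flag under $H_{1}$ is already settled. Neither Lemma \ref{lm:mr_boot_expand} nor Lemma \ref{lm:mr_boot_gauss} assumes $H_{0}$, and their proofs only involve mean-zero quantities ($w_{i,t}$, $x_{i,t}-\mathbb{E}x_{i,t}$, $\bar{y}_{n}-\mathbb{E}y_{t}$), because the recentring in $\widehat{\mathcal{\tilde{G}}}_{i,n}$ removes $\mathbb{E}\mathbf{x}_{i,t}v_{t}$ exactly. You can therefore cite both lemmas directly under $H_{1}$.
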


\begin{remark}
\normalfont If $b_{n}$ $\simeq $ $n^{\rho }$ with small $\rho $ $\in $ $%
(0,3/13]$ then a larger block size $\rho \uparrow $ yields a greater upper
bound on $p_{n}$. Otherwise a smaller block size $\rho \downarrow $ is
optimal when $\rho $ $\in $ $(3/13,1/3)$. This suggests fixing $\rho $ $=$ $%
3/13$ hence $\ln (p_{n})$ $=$ $o(n^{(16/13-\iota )/8\wedge s(b,\lambda )})$.
\end{remark}

\subsection{Local Power}

Now consider a sequence of local alternatives with drift adjusted to account
for a high dimension, cf. Lemma \ref{lm:mr_ng_phys}: 
\begin{equation*}
H_{1}^{L}:\phi =c\left\{ \ln (p_{n})\right\} ^{2}/\sqrt{n}\text{, where }c=%
\left[ c_{i}\right] _{i=1}^{p_{n}}\in \mathbb{R}^{p_{n}}.
\end{equation*}%
Let $\bar{c}$ $:=$ $\lim \inf_{n\rightarrow \infty }\max_{i}|c_{i}|$ $\in $ $%
[0,\infty ]$. The null hypothesis holds when $\bar{c}$ $=$ $0$. The proposed
bootstrap test yields non-trivial local power, with monotonic local power
improvement as $\max_{i}|\phi _{i}|$ deviates from $0$ (as $\bar{c}$ $%
\nearrow $ $\infty $). Notice the conditions of Theorem \ref{thm:mr:boot}
ensure $\ln (p_{n})$ $=$ $o(n^{1/4})$, hence $H_{1}^{L}$\ indeed represents
local-to-null drift.

\begin{theorem}
\label{thm:mr:boot_local}Let the conditions of Theorem \ref{thm:mr:boot}
hold. Under $H_{1}^{L}$, $\mathbb{P}(\mathcal{\hat{P}}_{n,\mathcal{M}_{n}}$ $%
<$ $\alpha )$ $\rightarrow $ $\alpha $ when $\bar{c}$ $=$ $0$, and $\lim
\inf_{n\rightarrow \infty }\mathbb{P}(\mathcal{\hat{P}}_{n,\mathcal{M}_{n}}$ 
$<$ $\alpha )$ $>$ $\alpha $ when $\bar{c}$ $>$ $0$. Moreover, $\mathbb{P}(%
\mathcal{\hat{P}}_{n,\mathcal{M}_{n}}$ $<$ $\alpha )$ $\nearrow $ $1$ as $%
\bar{c}$ $\nearrow $ $\infty $.
\end{theorem}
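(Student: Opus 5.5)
Under $H_1^L$ I would split $\mathcal{\hat T}_n$ into a null-type component plus a deterministic drift, and show that the bootstrap statistic does not see the drift. Write $y_t=\delta_i+\phi_i x_{i,t}+v_{i,t}$ with $\phi_i=c_i(\ln p_n)^2/\sqrt n$. Then $\sqrt n\hat\phi_{i,n}=\sqrt n\phi_i+[0,1]\widehat{\mathcal H}_{i,n}^{-1}n^{-1/2}\sum_t \mathbf{x}_{i,t}(v_{i,t}-\bar v_{i,n})$, so
\begin{equation*}
\sqrt n\hat\phi_{i,n}=c_i(\ln p_n)^2+\mathcal{S}_{n}(i)+R_{n}(i),\qquad \mathcal{S}_n(i):=\frac{1}{\sqrt n}\sum\nolimits_{t=1}^n[0,1]\mathcal H_i^{-1}\mathbf{x}_{i,t}v_t .
\end{equation*}
The first step is to show $\max_i|R_n(i)|=O_p((\ln p_n)^2/\sqrt n)$. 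The argument is the one behind Lemma \ref{lm:mr_ng_phys}. The only new ingredient is that $v_{i,t}-v_t=-\phi_i(x_{i,t}-\mathbb{E}x_{i,t})$ up to an intercept. This contributes a term of order $\max_i|\phi_i|\cdot\sqrt n\max_i|\widehat{\mathcal H}_{i,n}-\mathcal H_i|=O_p(|c|_\infty(\ln p_n)^{5/2}/\sqrt n)=o_p(1)$, using Bernstein-type maximal bounds and $\ln p_n=o(n^{1/4})$. This holds for bounded $c$; if $\bar c=\infty$ it suffices to work along subsequences with $\max_i|c_i|$ bounded, or to note that larger drift only increases $\mathcal{\hat T}_n$.

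The second step concerns the bootstrap. $\widehat{\mathcal{\tilde G}}_{i,n}$ is centered by construction, so $\mathbf{x}_{i,t}(y_t-\bar y_n)$ minus its sample mean differs from $w_{i,t}$ only by terms involving $\phi_i$ times centered regressor products and sample-mean errors. These are $O_p((\ln p_n)^2/\sqrt n)$ uniformly after multiplying by $\eta_t$ and using the block-sum bounds from the proof of Lemma \ref{lm:mr_boot_expand}. Hence Lemmas \ref{lm:mr_boot_expand} and \ref{lm:mr_boot_gauss} carry over unchanged. The bootstrap statistic therefore satisfies $\mathcal{\tilde T}_n\Rightarrow^p\max_i|\mathcal W_i\mathcal{\tilde Z}(i)|$, and, with $\mathcal M_n\to\infty$, the bootstrap critical value converges in probability to the $1-\alpha$ quantile $q_\alpha$ of $\max_i|\mathcal W_i\mathcal Z(i)|$. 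This uses continuity of that distribution, which follows from anti-concentration of Gaussian maxima given the uniformly positive variances.

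The third step combines these. By Lemma \ref{lm:mr_gauss_phys}, the Gaussian approximation, and the weight condition,
\begin{equation*}
\mathbb P(\mathcal{\hat P}_{n,\mathcal M_n}<\alpha)=\mathbb P\Big(\max_i|\mathcal W_i(c_i(\ln p_n)^2+\mathcal Z_n(i))|>q_\alpha\Big)+o(1).
\end{equation*}
If $\bar c=0$ this is $\alpha+o(1)$, exactly as in Theorem \ref{thm:mr:boot}. If $\bar c>0$, pick an index $i_n$ with $|c_{i_n}|\ge\bar c/2$. Because $(\ln p_n)^2\to\infty$, the drift $\mathcal W_{i_n}|c_{i_n}|(\ln p_n)^2$ diverges. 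The event $\{\max_i|\cdot|>q_\alpha\}$ contains $\{|\mathcal W_{i_n}(c_{i_n}(\ln p_n)^2+\mathcal Z_n(i_n))|>q_\alpha\}$, whose probability tends to one. So power is at least $\alpha$ in the limit, and in fact tends to one. For monotonicity in $\bar c$, use the fact that, by Anderson's inequality applied to the symmetric Gaussian vector, the probability that $\max_i|\mathcal W_i(\mu_i+\mathcal Z_n(i))|$ exceeds $q_\alpha$ is nondecreasing in $|\mu|$ along rays. This gives power increasing to $1$ as $\bar c\nearrow\infty$.

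I expect the main obstacle to be Step 2: bounding the drift contamination of the centered bootstrap score uniformly in $i$ under physical dependence. This requires the block-multiplier maximal inequality for products $\eta_t(x_{i,t}-\mathbb{E}x_{i,t})^2$ to hold with rate $(\ln p_n)^2/\sqrt n$ within the $p_n$ bounds of Theorem \ref{thm:mr:boot}.
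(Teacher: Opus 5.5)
Your argument is sound in substance. The paper's own proof only points to Hill (2025b, proof of Theorem 3.2). Your skeleton (drift plus a centered score, a bootstrap that cannot see the drift, comparison with the bootstrap quantile) is the natural content of that argument.

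What you add, and what changes the picture, is the remark that $\sqrt n\phi_i=c_i(\ln p_n)^2\to\infty$. For every $\bar c>0$ the power therefore tends to one, so the second and third claims are immediate and the Anderson-inequality step is not needed. This route is also more robust than a limit-experiment argument. It only needs the bootstrap critical value to be $o_p((\ln p_n)^2)$, which follows from Lemma \ref{lm:mr_boot_gauss}(a) and $\mathbb{E}\max_{i\le p_n}|\mathcal{\tilde{Z}}_n(i)|\le K\sqrt{\ln p_n}$, without using the limit $\max_{i\in\mathbb{N}}|\mathcal{W}_i\mathcal{\tilde{Z}}(i)|$. If you track only the index $i_n$, unbounded $c$ causes no trouble, since the remainder at $i_n$ is $o_p(|c_{i_n}|(\ln p_n)^2)$.

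The same remark shows the first claim is right only if $\bar c=0$ means $c\equiv 0$, as the paper's sentence ``the null hypothesis holds when $\bar c=0$'' presumes. With $n$-dependent $c$, e.g.\ $c_1=1/\ln p_n$ and $c_i=0$ otherwise, $\bar c=0$ yet your Step 3 gives power tending to one. State that reading and invoke Theorem \ref{thm:mr:boot}(a) directly.

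Three repairs are needed.

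\textbf{First}, $\mathcal{S}_n(i)$ as you define it, with $v_t=y_t-\mathbb{E}y_t$, is not centered under $H_1^L$. Since $[0,1]\mathcal{H}_i^{-1}\mathbb{E}\mathbf{x}_{i,t}v_t=\phi_i$, its mean is exactly $c_i(\ln p_n)^2$, and your display counts the drift twice. Likewise, the $\phi_i(x_{i,t}-\mathbb{E}x_{i,t})$ part of $v_{i,t}-v_t$ contributes exactly $-\sqrt n\phi_i$, not a small term. In fact $\sqrt n\hat\phi_{i,n}=[0,1]\widehat{\mathcal{H}}_{i,n}^{-1}n^{-1/2}\sum_t\mathbf{x}_{i,t}v_t$ identically. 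The correct split is $c_i(\ln p_n)^2+[0,1]\mathcal{H}_i^{-1}n^{-1/2}\sum_t w_{i,t}+R_n(i)$, with $R_n(i)=[0,1](\widehat{\mathcal{H}}_{i,n}^{-1}-\mathcal{H}_i^{-1})(n^{-1/2}\sum_t w_{i,t}+\sqrt n\,\mathbb{E}\mathbf{x}_{i,t}v_t)$.

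\textbf{Second}, with the Lemma B.4 rates, $\max_i|R_n(i)|=O_p((\ln p_n)^2/\sqrt n+|c|_\infty(\ln p_n)^3/\sqrt n)$. Neither this nor your $(\ln p_n)^{5/2}/\sqrt n$ vanishes merely because $\ln p_n=o(n^{1/4})$. Both do vanish because the conditions of Theorem \ref{thm:mr:boot} force $\ln p_n=o(n^{2/13})$.

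\textbf{Third}, the step you flag as the main obstacle is not one. Under any hypothesis the centered bootstrap summand equals $(w_{i,t}-\bar w_{i,n})-(\bar y_n-\mathbb{E}y_t)(\mathbf{x}_{i,t}-\bar{\mathbf{x}}_{i,n})$, with sample means $\bar w_{i,n}$ and $\bar{\mathbf{x}}_{i,n}$, and this involves no $\phi_i$. Lemmas \ref{lm:mr_boot_expand} and \ref{lm:mr_boot_gauss} are not stated under $H_0$, so they apply verbatim. The $\phi_i$-terms arise only because you compare with $\mathbf{x}_{i,t}v_{i,t}$ instead of $w_{i,t}$.
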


\section{Simulation Study\label{sec:sim}}

We now study the small sample properties of the proposed test. \cite%
{McKeague_Qian_2015} compare their \textit{adaptive resampling test} [ART]
with the Likelihood Ratio test, multiple testing with Bonferroni correction,
centered percentile bootstrap, and higher criticism. They find that in an
iid linear regression setting ART has competitive empirical power and
accurate size overall, and generally dominates all other methods. We
therefore compare our test with ART based on a nonparametric bootstrap and
parametric wild bootstrap.

\subsection{Data Generating Process}

Our simulation design is similar to McKeague and Qian's (%
\citeyear{McKeague_Qian_2015}), with the major differences that $%
\{y_{t},x_{t}\}$ may be serially dependent, $y_{t}$ may be conditionally
heteroscedastic, and $p_{n}/n\rightarrow \infty $. Throughout $\epsilon _{t}$
is iid $\mathcal{N}(0,1)$, and $v_{t}$ is either iid $\mathcal{N}(0,1)$ or
GARCH: 
\begin{equation*}
\begin{tabular}{ll}
E1: & $v_{t}$ $=$ $\epsilon _{t}$ \\ 
E2: & $v_{t}$ $=$ $\sigma _{t}\epsilon _{t}$ with iid $\epsilon _{t}$ $\sim $
$\mathcal{N}(0,1)$ \\ 
& GARCH(1,1) $\sigma _{t}^{2}$ $=$ $1$ $+$ $.3v_{t-1}^{2}$ $+$ $.5\sigma
_{t-1}^{2}$ with initial condition $\sigma _{1}^{2}$ $=$ $1$.%
\end{tabular}%
\end{equation*}%
Covariates are joint normals that are either serially independent or
dependent. There are four total cases. Let $\Sigma $ $\in $ $\mathbb{R}%
^{p_{n}\times p_{n}}$ satisfy $\Sigma _{i,i}$ $=$ $1$, and $\Sigma _{i,j}$ $%
= $ $\gamma $ $\forall i$ $\neq $ $j$ with $\gamma $ $\in $ $\{0,.5,.8\}$.
Then 
\begin{equation*}
\begin{tabular}{ll}
C1: & $x_{t}$ $\sim $ $\mathcal{N}(0,\Sigma )$ \\ 
C2: & $x_{t}$ $=$ $Aw_{t}$ $+$ $v_{x,t}$ where $v_{x,t}\sim \mathcal{N}%
(0,I_{p_{n}}),$ \\ 
& $A\in $ $\mathbb{R}^{k}$ from a uniform distribution on $[-1,1]$, \\ 
& $w_{i,t}$ $=$ $.5w_{i,t-1}$ $+$ $\epsilon _{i,t}$ where iid $\epsilon
_{i,t}$ $\sim $ $\mathcal{N}(0,1),$ $i=1,...,p_{n}.$%
\end{tabular}%
\end{equation*}%
Under C1, $x_{i,t}$ are serially independent, and mutually independent 
\textit{if and only if} $\gamma $ $=$ $0$, and otherwise have
cross-correlation $\gamma $ (under standardization). Under C2, $x_{i,t}$ are
serially and mutually dependent, where $A$ is the same uniform draw for
every sample.\footnote{%
If $1/n\sum_{t=1}^{n}x_{t}x_{t}^{\prime }$ is not positive definite then $A$
is redrawn again. This never took place in our reported study.} The total
covariate vector $\tilde{x}_{t}$ used for the tests is comprised of a lagged 
$y_{t}$ and $x_{t}$: $\tilde{x}_{t}$ $=$ $[y_{t-1},x_{t}^{\prime }]^{\prime
} $.

We use $10$ total models linking $(\tilde{x}_{t},y_{t})$, based on $5$ base
structures: 
\begin{equation*}
\begin{tabular}{ll}
\textbf{Model $\boldsymbol{i}$ (\textit{{\small {null}}})} & $y_{t}=v_{t}$
\\ 
\textbf{Model $\boldsymbol{ii}$ (\textit{{\small {weak/strong sparse}}})} & $%
y_{t}$ $=$ $\phi _{1}x_{1,t}$ $+$ $v_{t}$ with $\phi _{1}$ $\in $ $%
\{.15,.25\}$ \\ 
\textbf{Model $\boldsymbol{iii}$ (\textit{{\small {moderate}}})} & $y_{t}$ $%
= $ $\sum_{i=1}^{p_{n}}\phi _{i}x_{i,t}$ $+$ $\varphi y_{t-1}+$ $v_{t}$
where $\{\phi _{i}\}_{i=1}^{5}$ $=$ $.15$, \\ 
& $\{\phi _{i}\}_{i=6}^{10}$ $=$ $-.1$, $\{\phi _{i}\}_{i=11}^{p_{n}}$ $=$ $%
0 $, plus $\varphi $ $\in $ $\{.00,.15,.50\}$; \\ 
\textbf{Model $\boldsymbol{iv}$ (\textit{{\small {AR weak/strong sparse}}})}
& $y_{t}$ $=\varphi y_{t-1}$ $+$ $v_{t}$ with $\varphi $ $\in $ $%
\{.25,.50\}; $ \\ 
\textbf{Model $\boldsymbol{v}$ (\textit{{\small {weak dense}}})} & $y_{t}$ $%
= $ $\sum_{i=1}^{p_{n}}\phi _{i}x_{i,t}$ $+$ $v_{t}$, \\ 
& $\phi _{i}=\varphi \mathcal{I}_{i\leq \lfloor p_{n}/3\rfloor }-(\varphi /3)%
\mathcal{I}_{i\leq \lfloor 2p_{n}/3\rfloor }$, and $\varphi $ $\in $ $%
\{.10,.15\}.$%
\end{tabular}%
\end{equation*}%
\cite{McKeague_Qian_2015} use iid $\{v_{t},x_{t}\}$ thus $v_{t}$ under E1
and $x_{t}$\ under C1. They work with Models $i$, $ii$ with $\phi _{1}$ $=$ $%
.25$, and $iii$ with $\varphi $ $=$ $0$ (no AR component). We also
incorporate GARCH shocks $v_{t}$, and AR feedback in both $x_{t}$ and $y_{t}$%
. Under Model $iii$ there may be AR feedback in $x_{t}$ and $y_{t}$, and
Model $iv$ mirrors the sparse signal $ii$ in an AR setting. Models $ii$ with 
$\phi _{1}$ $=$ $.15$ and $iv$ with $\varphi $ $=$ $.25$ are sparse with a
weak signal because the test uses \textit{all} covariates $\tilde{x}_{t}$ $=$
$[y_{t-1},x_{t}^{\prime }]^{\prime }$, but only $x_{1,t}$ or $y_{t-1}$
matters. Thus Models $iii$ with AR and GARCH feedback, and $iv$, are the
greatest departures from an iid setting.

Model $v$ is a variant of a process used in Zhang and Laber's (%
\citeyear{ZhangLaber2015}) numerical study of ART with their
self-standardized max- and ave-t-statistics, based on a parametric
simulation-based bootstrap. We set $\phi _{i}$ via $\varphi $ small enough
to represent a weak dense alternative. As we will see below, \cite%
{ZhangLaber2015} correctly note that an average statistic may be more adept
at detecting (weak) dense deviations from the null, compared to a
max-statistic.

Model $i$ represents $H_{0}$, while $H_{1}$ holds under Models $ii$-$v$ with
varying degrees of departure from $H_{0}$ (weak or strong, sparse, moderate
or dense signals). Under Models $ii$ and $iv$\ there is only one relevant
predictor that may be a very weak signal. We will see that this case is
especially challenging under GARCH shocks, where generally covariate
dependence improves a detectable signal. In all cases $\{y_{t},x_{t}\}$ are
geometrically $\mathcal{L}_{r}$-\emph{pd} for any $r$ $>$ $1$, hence size $%
\lambda $ $>$ $0$ is arbitrarily large.

The use of GARCH shocks $v_{t}$ for $y_{t}$ under E1 deviates from the
Assumption \ref{assum:marg-reg}.a condition that $y_{t}$ has all moments
finite \citep[e.g.][]{Kesten1973,Basrak_etal2002}. Indeed, $\mathbb{P}%
(|v_{t}|$ $>$ $x)$ $\approx $ $cx^{-\kappa }$ for $\kappa $ $\in $ $(4,8)$,
below the $\mathcal{L}_{8}$-\emph{pd} requirement, cf. Theorem \ref%
{thm:mr:boot}.\footnote{%
See \citet[Theorem 3.1, Corollary 3.5]{Basrak_etal2002} for $x^{\kappa }%
\mathbb{P}(|v_{t}|$ $>$ $x)$ $\rightarrow $ $c$ $\in $ $(0,\infty )$ for
some $\kappa $ $>$ $0$ in the GARCH(1,1) case. Moreover, $\mathbb{E}%
(.3\epsilon _{t}^{2}$ $+$ $.5)^{2}$ $<$ $1$ and $\mathbb{E}(.3\epsilon
_{t}^{2}$ $+$ $.5)^{4}$ $>$ $1$ are easily verified, hence $\kappa $ $\in $ $%
(4,8)$, cf. \cite{Bollerslev1986} and \cite{BougerolPicard1992}.} This
deviation, however, does not appear to matter in small samples.\footnote{%
In an experiment not reported here we used GARCH $v_{t}$ with iid $\epsilon
_{t}$ drawn from a standardized \textit{truncated normal} with support $%
[-4,4]$. In this case Assumption \ref{assum:marg-reg}.a holds. Test results
are essentially identical to the standard GARCH case treated here: the
dominating force is the random volatility itself and not the lack of higher
moments \textit{per se}.}

We simulate $n$ $+$ $500$ observations and retain the final $n$ observations
for analysis, with sample sizes $n$ $\in $ $\{100,200,400\}$. Block size for
the dependent wild bootstrap is $b_{n}$ $=$ $5[n^{\rho }]$ with $\rho $ $=$ $%
1/6$ so that $b_{n}$ $\in $ $\{10,10,15\}$. Hence from Theorem \ref%
{thm:mr:boot}\ we need $\ln (p_{n})$ $=$ $o(n^{\{1/2-3\rho /2\}\wedge
s(b,\lambda )})$ $=$ $o(n^{1/4})$: notice $s(b,\lambda )$ $=$ $1/4$ in (\ref%
{s(b,lam)}) under Gaussianicity and arbitrary $\lambda $ (due to geometric
memory). We therefore use $p_{n}$ $\in $ $\{10,50,100,175,\bar{p}_{n}\}$
with $\bar{p}_{n}$ $=$ $\lfloor 20\exp \{n^{1/4}\}/\sqrt{\ln (n)}\rfloor $ $%
\in $ $\{220,373,715\}$.

\subsection{Tests}

The three tests performed are as follows. Note $\{x_{i,t},y_{t}\}$ are
standardized for all tests.

\subsubsection{Max- and Ave-Tests}

We compute the max-statistic $\mathcal{\hat{T}}_{n}$ $:=$ $\max_{1\leq i\leq
p_{n}}|\sqrt{n}\mathcal{W}_{n,i}\hat{\phi}_{i,n}|$ with $\hat{\phi}_{i,n}$ $%
= $ $\sum_{t=1}^{n}(x_{i,t}$ $-$ $\bar{x}_{i,n})(y_{t}$ $-$ $\bar{y}%
_{n})/\sum_{t=1}^{n}(x_{i,t}$ $-$ $\bar{x}_{i,n})^{2}$ and parametric wild
bootstrap p-value $\mathcal{\hat{P}}_{n,\mathcal{M}}^{(p)}$ with $\mathcal{M}%
_{n}$ $=$ $1000$ bootstrap samples. The parametric wild bootstrap
out-performed the dependent wild bootstrap in experiments not reported here:
generally the latter leads to over-sized tests. The bootstrap procedure uses
iid draws $\{\xi _{l}\}_{l=1}^{[n/b_{n}]}$ from $\mathcal{N}(0,1)$.

Weights are $\mathcal{W}_{n,i}$ $=$ $1$ or $\mathcal{W}_{n,i}$ $=$ $%
1/se_{i,n}^{(1)}$ with the classic least squares standard error $%
se_{i,n}^{(1)}$. We call these the max-test and max-t-test. The use of a HAC
estimator $se_{i,n}^{(2)}$ to yield a true max-t-test does not improve
results irrespective of the chosen bandwidth (hence irrespective of using
automatic bandwidth selection), and generally leads to size distortions and
low power, depending on $n$, error and covariate case. Indeed, a max-t-test
in either form never dominated a max-test in this study under Models $i,$ $%
ii,$ $iii,$ and $v$. A max-t-test with $\mathcal{W}_{n,i}$ $=$ $%
1/se_{i,n}^{(1)}$ yields slightly larger (though nearly equivalent)
empirical power in Model $iv$. Overall this matches evidence in 
\citet[Fn.
13]{HillMotegi2020}, \cite{Hill_2025_maxtest} and \cite{HillLi2025}. We
therefore only report max-test results. We reject a max-test at level $%
\alpha $ when $\mathcal{\hat{P}}_{n,\mathcal{M}}^{(p)}$ $<$ $\alpha $.%
\footnote{%
All computations are performed on the Longleaf cluster at the University of
North Carolina-Chapel Hill, using \textsc{Matlab 2024a} and \textsc{R 4.4.0}%
, with a SLURM scheduler.}

We also study the average $\widehat{\mathfrak{A}}_{n}$ $:=$ $%
\sum_{i=1}^{p_{n}}|\sqrt{n}\mathcal{W}_{n,i}\hat{\phi}_{i,n}|$ with weight $%
\mathcal{W}_{n,i}$ $=$ $1$. As above, the use of $\mathcal{W}_{n,i}$ $=$ $%
1/se_{i,n}^{(1)}$ or $1/se_{i,n}^{(2)}$ never yielded a dominant test.

\subsubsection{ART}

We need to compute and rank bootstrapped confidence intervals using a bias
correction detailed below. Compute $\hat{\phi}_{\hat{l}_{n},n}$ with $\hat{l}%
_{n}$ $=$ $\arg \max_{1\leq l\leq p_{n}}|\hat{\phi}_{l,n}|$, let $T_{n}$ $:=$
$\hat{\phi}_{\hat{l}_{n},n}/se_{\hat{l}_{n},n}$ with the conventional
standard error $se_{\hat{l}_{n},n}$, and let $(\hat{\phi}_{\hat{l}%
_{n},n}^{\ast },se_{\hat{l}_{n},n}^{\ast })$ be bootstrap versions. We
consider two procedures: nonparametric bootstrap [NB] (resampling with
replacement) as in \cite{McKeague_Qian_2015}, which is only suitable for
independent observations, and parametric (dependent) wild bootstrap [PWB].
The ART procedure in the latter case does not involve resampling \textit{per
se}, and is more akin to the weak identification robust wild bootstrap in 
\cite{Hill2021}, in the spirit of standard error computation in \cite%
{Andrews1999} and weak identification robust critical values in \cite%
{Andrews_Cheng_2012}. Although we do not present a supporting theory here,
the ideas in \cite{Hill2021} will extend to dependent data by using blocking
in order to handle the lack of uniformity detailed in \cite%
{McKeague_Qian_2015}. See \citet[Appendix D]{sm_sig_pred} for complete NB
and PWB details. Brief NB details follow below.

The bias correction is derived as follows. Let $T_{n}^{\ast }$ $:=$ $\hat{%
\phi}_{\hat{l}_{n},n}^{\ast }/se_{\hat{l}_{n},n}^{\ast }$. Denote a nuisance
sequence $\{\lambda _{n}(\alpha )\}$, $\lambda _{n}(\alpha )$ $>$ $0$, $%
\lambda _{n}(\alpha )$ $\rightarrow $ $\infty $ and $\lambda _{n}(\alpha )=o(%
\sqrt{n})$ for a level $\alpha $ test. Compute for $1000$ independently
drawn bootstrap samples 
\begin{equation*}
\mathcal{A}_{n}^{\ast }(\alpha ):=\sqrt{n}\left( \hat{\phi}_{\hat{l}%
_{n},n}^{\ast }-\hat{\phi}_{\hat{l}_{n},n}\right) \times \mathcal{I}%
_{\left\vert T_{n}^{\ast }\right\vert >\lambda _{n}(\alpha )\text{ or }%
\left\vert T_{n}\right\vert >\lambda _{n}(\alpha )}+\mathbb{V}_{n}^{\ast
}\times \mathcal{I}_{\left\vert T_{n}^{\ast }\right\vert \leq \lambda
_{n}(\alpha )\text{ and }\left\vert T_{n}\right\vert \leq \lambda
_{n}(\alpha )},
\end{equation*}%
where $\mathbb{V}_{n}^{\ast }$ under NB is detailed in 
\citet[eq. (4) and p.
1431]{McKeague_Qian_2015}. Let $[L_{\alpha /2},U_{\alpha /2}]$ be the
empirical lower and upper $\alpha /2$\ quantiles of $\mathcal{A}_{n}^{\ast
}(\alpha )$. We reject the null at level $\alpha $ when $\sqrt{n}\hat{\phi}_{%
\hat{l}_{n},n}$ $\notin $ $[L_{\alpha /2},U_{\alpha /2}]$. A bootstrapped
p-value is also easily computed as $1/1000\sum_{j=1}^{1000}\mathcal{I}_{|%
\mathcal{A}_{n,j}^{\ast }(\alpha )|>|\sqrt{n}\hat{\phi}_{\hat{l}_{n},n}|}$,
with $\mathcal{A}_{n,j}^{\ast }(\alpha )$ the $j^{th}$ bootstrap sample $%
\mathcal{A}_{n}^{\ast }(\alpha )$. Overall the PWB never dominated the NB
under any hypothesis, yielding an under-sized test. We therefore only report
NB results.

The tuning parameter $\lambda _{n}(\alpha )$ is set as follows. At nominal
size $\alpha $, and using a Bonferroni inequality, put $\lambda _{n}$ $=$ $%
\lambda _{n}(\omega ,\alpha )$ $:=$ $\max \{\sqrt{\omega \ln (n)},\mathcal{Z}%
_{\alpha /(2p_{n})}\}$ where $\mathcal{Z}_{\alpha /(2p_{n})}$ is the upper $%
\alpha /(2p_{n})$ quantile of $\mathcal{N}(0,1)$. The scale $\omega $ is
selected by a separate parametric bootstrap with $1000$ draws yielding $%
\mathbf{\hat{\phi}}_{\hat{l}_{n},n}^{\ast }$. Define $\mathcal{R}_{n}$ $:=$ $%
\sqrt{n}|\mathbf{\hat{\phi}}_{\hat{l}_{n},n}^{\ast }$ $-$ $\hat{\phi}_{n}|$,
let $\mathcal{R}_{[j]}$ be the ranked values $\mathcal{R}_{[1]}$ $\geq $ $%
\mathcal{R}_{[2]}$ $\geq $ $\cdots $, and pick $\omega ^{\ast }$ such that $%
\lambda _{n}(\omega ^{\ast },\alpha )$ $=$ $\mathcal{R}_{[\alpha n]}$. This
yields the double bootstrap as detailed in \cite{McKeague_Qian_2015}.

\subsection{Results}

Rejection frequencies at the $5\%$ level are shown in Figures G.1-G.10 in
the supplemental material \citet[Appendix
H]{sm_sig_pred}. Each test (max, ave, ART) works monotonically better as
cross-covariate dependence increases, through mutual (in)dependence with $%
\gamma $ $\in $ $\{0,.5,.8\}$ and AR feedback. Indeed, under AR covariates
C2 the tests perform similarly, with mildly low size under $H_{0}$, and
empirical power at or near $100\%$ under each $H_{1}$. We therefore only
report results under covariate case C1, cases $\gamma $ $\in $ $\{0,.8\}$.

\subsubsection{Ave-test}

The ave-test shows size distortions in several cases: over-sized with strong
mutual dependence across covariates $\gamma $ $=$ $.8$ at $n$ $=$ $100$, and
well under-sized as $p_{n}$ increases under mutually independent covariates.
Covariate dependence extends to estimator dependence, which improves the
ave-test's ability to detect deviations, especially for moderate to large
values of $p_{n}$. In most deviations from the null the ave-test is
(potentially massively) trumped by both the max-test and ART, while the
ave-test especially struggles with mutually independent covariates and/or
large dimension $p_{n}$.

The ave-test is especially vulnerable to high dimension. In multiple cases
power falls heavily as $p_{n}$ increases compared to the max-test and ART.
Ultimately the reason is a max-statistic and ART work parsimoniously with
the single most informative (weighted) regression slope, while the
ave-statistic smooths over many possibly redundant or statistically
uninformative estimators (e.g. Models $ii,$ $iii,$ $iv$). The latter
significantly augments noise in the test, appearing as low empirical power
under high dimension. For example, in Model $iii$ with mutually independent
covariates ($\gamma $ $=$ $0$) and $n$ $\in $ $\{100,200\}$, empirical power
is roughly $5\%$ for large $p_{n}$, yielding zero net power. See, e.g.,
Figure G.6, first and third column.

The only case where the ave-test may have an advantage is under Model $v$
with a weak dense deviation from the null (in certain error and covariate
cases), as intuited by \cite{ZhangLaber2015}. For example, under a very weak
signal $\varphi $ $=$ $.10$ with $n$ $\in $ $\{200,400\}$ and covariate
mutual independence $\gamma $ $=$ $0$, empirical power rises faster and over
higher dimensions. The result is more attenuated with a stronger signal $%
\varphi $ $=$ $.15$ and greater noise caused by GARCH innovations (see
Figure G.8, third row, third column). Overall, however, the ave-test is
either roughly equivalent to or dominated by the max-test and ART.

\subsubsection{Max-test and ART}

We finish this section by focusing on the max-test and ART. In general these
tests perform better with covariate dependence and limited error volatility
(i.e. iid shocks), where neither is surprising. Empirical power for all
tests plunge with GARCH volatility, and increases in $n$. In many cases the
ART and max-test are quite similar in their responses to covariate
dependence, error properties, and degree of signal or deviation from the
null, in particular at $n$ $=$ $400$.

A few notable differences, however, do exist. Overall the max-test is best
suited for detecting subtle deviations from the null (weak signal), matching
evidence elsewhere \citep[e.g.][]{HillLi2025,Hill_2025_maxtest}. This is
best revealed by Model $ii$ (one relevant covariate) with weak signal $\phi
_{1}$ $=$ $.15$, and Model $iv$ (one relevant AR covariate) with $\varphi $ $%
=$ $.25$. See Figures G.2, G.3, G.7 and G.8. In Model $ii$\ with mutually
independent covariates the max-test dominates, and is dominant even under
mutual dependence ($\gamma $ $=$ $.8$) when the signal is very weak ($\phi
_{1}$ $=$ $.15$), or with GARCH errors and independent covariates when the
signal is stronger ($\phi _{1}$ $=$ $.25$). In Model $iv$ with a weak signal 
$\varphi $ $=$ $.25$ the max-test strongly dominates at smaller sample sizes 
$n$ $\in $ $\{100,200\}$, an advantage that all but disappears at $n$ $=$ $%
400$ or with a stronger signal $\varphi $ $=$ $.50$.

Thus, the ART procedure and its reliance on the most informative covariate
index $\hat{\imath}_{n}$ has real merit, as revealed in \cite%
{McKeague_Qian_2015}, subsequent comments in, e.g., \cite{ZhangLaber2015},
and our study. That said, considering that ART is a type of max-test, 
\textit{why} ART works well in some cases and less-so in others must be due
to its use and control of $\hat{\imath}_{n}$\ in a bootstrap setting akin to
ideas in \cite{Andrews_Cheng_2012} and \cite{Hill2021}. That ART and its
bootstrap theory operate on a stochastic index $\hat{\imath}_{n}$ adds
complexity to the procedure and limit theory by introducing weak
identification, even under the simulation refinement in \cite{ZhangLaber2015}%
. In this paper we only operate on the max- or ave-statistic with a wild
bootstrap, never requiring the optimal covariate index. The streamlined
procedure leads to size control, and dominant or competitive power for the
max-test, in particular against very weak and sparse deviations from the
null.

\section{Conclusion\label{sec:conclus}}

We present block bootstrap max- and ave-tests for detecting the presence of
significant predictors in a high dimensional setting. The number of
covariates to be screened may be large $p$ $>>$ $n$, growing at an
exponential rate, and we allow for weakly dependent and heterogeneous
(possibly non-stationary) data. Our theory gives social and material
scientists broad access to cross-sections, time series and panels where high
dimensions are frequently encountered. The work here should extend with
moderate effort to coherence estimation under mutual and serial dependence
in high dimensional settings. Another interesting prospect is to extend the
optimal test class in \cite{AndrewsPloberger1994} to the HD setting here,
where the nuisance term is the covariate index $i$. The goal here will be to
place HD max- and ave-tests properly within a field of optimal tests in the
sense of greatest weighted average power. We leave these ideas for future
work.

\setcounter{equation}{0} \renewcommand{\theequation}{{\thesection}.%
\arabic{equation}} \appendix
\setstretch{1.25}

\section{Appendix: Technical Proofs\label{sec:app}}

Required supporting results Lemmas B.2-B.4 presented in 
\citet[Appendix
B]{sm_sig_pred} deliver concentration bounds, and max-LLN's for key
components in this paper. In general we assume $\mathbb{E}y_{t}$ $=$ $0$,
drop the constant term $\delta _{i}$ from (\ref{mr_model}), and set $\mathbf{%
x}_{i,t}$ $=$ $x_{i,t}$ to improve clarity at no real cost. The bootstrap
proofs assume the number of blocks $\mathcal{N}_{n}$ $=$ $n/b_{n}$ is an
integer to simplify arguments.\medskip

\noindent \textbf{Proof of Lemma \ref{lm:mr_ng_phys}.} Use the triangle
inequality to yield%
\begin{equation*}
\left\vert \max_{1\leq i\leq p_{n}}\left\vert \sqrt{n}\hat{\phi}%
_{i,n}\right\vert -\max_{1\leq i\leq p_{n}}\left\vert \frac{1}{\sqrt{n}}%
\sum\nolimits_{i=1}^{n}\mathcal{H}_{i}^{-1}x_{i,t}v_{t}\right\vert
\right\vert \leq \max_{1\leq i\leq p_{n}}\left\vert \sqrt{n}\hat{\phi}_{i,n}-%
\frac{1}{\sqrt{n}}\sum\nolimits_{i=1}^{n}\mathcal{H}_{i}^{-1}x_{i,t}v_{t}%
\right\vert .
\end{equation*}%
It suffices to bound the right hand side. Under $H_{0}$ for all $i$, $%
v_{i,t} $ $=$ $v_{t}$ $=$ $y_{t}$ $-$ $\mathbb{E}y_{t}$ is $\mathcal{L}_{2}$%
-orthogonal to all $x_{i,t}$: $\mathbb{E}x_{i,t}v_{t}$ $=$ $0$. Therefore, 
\begin{equation*}
\sqrt{n}\hat{\phi}_{i,n}=\frac{1}{\sqrt{n}}\sum_{i=1}^{n}\mathcal{H}%
_{i}^{-1}x_{i,t}v_{t}+\sqrt{n}\left( \widehat{\mathcal{H}}_{i}^{-1}-\mathcal{%
H}_{i}^{-1}\right) \frac{1}{n}\sum_{i=1}^{n}x_{i,t}v_{t}.
\end{equation*}%
Define 
\begin{equation}
\left\Vert \check{z}\right\Vert _{n,p}:=\max_{i,t}\{\left\Vert
x_{i,t}\right\Vert _{p}\vee \left\Vert y_{t}\right\Vert _{p}\},  \label{z}
\end{equation}%
and note $\lim \sup_{n\rightarrow \infty }||\check{z}||_{n,p}$ $\leq $ $%
ap^{b}$ for finite $a>0$ and $b$ $\in $ $[0,\infty )$ under Assumption \ref%
{assum:marg-reg}. Set 
\begin{equation*}
\alpha :=\frac{2}{2b+1}\in (0,2].
\end{equation*}%
Then $\limsup_{p\rightarrow \infty }\{p^{1/2-1/\alpha }\limsup_{n\rightarrow
\infty }||\check{z}||_{n,p}\}$ $<$ $\infty $, hence (\ref{PHI_z}) implies $%
\limsup_{r\rightarrow \infty }r^{1/2-1/\alpha }$ $\times $ $%
\limsup_{n\rightarrow \infty }\{\check{\Theta}_{n}^{(r)}\}$ $<$ $\infty $.
Now use Lemma B.4 for $x_{i,t}^{2}$ and $x_{i,t}v_{t}$ to deduce 
\begin{equation*}
\max_{1\leq i\leq p_{n}}\left\vert \widehat{\mathcal{H}}_{i}-\mathcal{H}%
_{i}\right\vert =O_{p}\left( \frac{\ln (p_{n})}{\sqrt{n}}\right) \text{ and }%
\max_{1\leq i\leq p_{n}}\left\vert \frac{1}{n}\sum_{i=1}^{n}(x_{i,t}v_{t}-%
\mathbb{E}x_{i,t}v_{t})\right\vert =O_{p}\left( \frac{\ln p_{n})}{\sqrt{n}}%
\right) .
\end{equation*}%
This yields for any $\{p_{n}\}$ satisfying $\ln (p_{n})$ $=$ $o(n^{1/4})$,%
\begin{equation*}
\left\vert \max_{1\leq i\leq p_{n}}\left\vert \sqrt{n}\hat{\phi}%
_{i,n}\right\vert -\max_{1\leq i\leq p_{n}}\left\vert \frac{1}{\sqrt{n}}%
\sum_{i=1}^{n}\mathcal{H}_{i}^{-1}x_{i,t}v_{t}\right\vert \right\vert
=O_{p}\left( \sqrt{n}\frac{\ln (p_{n})}{\sqrt{n}}\frac{\ln (p_{n})}{\sqrt{n}}%
\right) =o_{p}(1).\text{ }\blacksquare
\end{equation*}%
\textbf{Proof of Lemma \ref{lm:mr_gauss_phys}.} Define%
\begin{equation*}
\mathcal{\hat{Z}}_{n}(i):=\frac{1}{\sqrt{n}}\sum_{t=1}^{n}z_{i,t}\text{
where }z_{i,t}:=\left[ 0,1\right] \mathcal{H}_{i}^{-1}x_{i,t}v_{t}.
\end{equation*}%
Thus $\sigma _{n}^{2}(i)$ $=$ $\mathbb{E}\mathcal{\hat{Z}}_{n}^{2}(i)$ and $%
\sigma ^{2}(i)$ $=$ $\lim_{n\rightarrow \infty }\sigma _{n}^{2}(i)$. Recall $%
\mathcal{Z}_{n}(i)$ $\sim $ $\mathcal{N}(0,\sigma _{n}^{2}(i))$. Then 
\begin{equation*}
\rho _{n}:=\sup_{c\geq 0}\left\vert \mathbb{P}\left( \left\vert \max_{1\leq
i\leq p_{n}}\left\vert \mathcal{\hat{Z}}_{n}(i)\right\vert \right\vert
>c\right) -\mathbb{P}\left( \left\vert \max_{1\leq i\leq p_{n}}\left\vert 
\mathcal{Z}_{n}(i)\right\vert \right\vert >c\right) \right\vert .
\end{equation*}

We prove below that $\sigma _{n}^{2}(i)$ is uniformly bounded and $\rho _{n}$
$\rightarrow $ $0$. Then, by the definition of distribution convergence, as
well as $\sigma _{n}^{2}(i)$ $\rightarrow $ $\sigma ^{2}(i)$ $>$ $0$, $%
\mathcal{\hat{Z}}_{n}(i)$ $\sim $ $\mathcal{N}(0,1)$ $\forall i,n$, and the
fact that Gaussian processes are fully characterized by their first two
moments, there exists a Gaussian process $\{\mathcal{Z}(i)\}$, $\mathcal{Z}%
(i)$ $\sim $ $\mathcal{N}(0,\sigma ^{2}(i))$, such that 
\begin{equation*}
\lim_{n\rightarrow \infty }\mathbb{P}\left( \max_{1\leq i\leq
p_{n}}\left\vert \mathcal{\hat{Z}}_{n}(i)\right\vert >c\right) =\mathbb{P}%
\left( \max_{i\in \mathbb{N}}\left\vert \mathcal{Z}(i)\right\vert >c\right) 
\text{ }\forall c>0.
\end{equation*}%
This proves ($b$).

Now consider $\sigma _{n}^{2}(i)$. Assumption \ref{assum:marg-reg}.c implies 
$\min_{i,n}\sigma _{n}^{2}(i)$ $>$ $0$. The proof of Lemma B.4, in
conjunction with Assumption \ref{assum:marg-reg}.c and (\ref{PHI_z}), imply $%
z_{i,t}$ is $\mathcal{L}_{r/2}$-\emph{pd}, $r$ $\geq $ $4$, with coefficient
accumulation $\max_{1\leq t\leq n}\{\Theta _{i,t}^{(p/2)}\vee \tilde{\Theta}%
_{t}^{(p/2)}\}$ $\leq $ $K||\check{z}||_{n,p/2}$ with $\check{z}$ from (\ref%
{z}). See also arguments below. Now use Lemma B.2.a with Assumption \ref%
{assum:marg-reg}.a to deduce 
\begin{equation*}
\max_{i,n}\sigma _{n}^{2}(i)\leq 2\max_{i,n}\max_{1\leq t\leq n}\left\{
\Theta _{i,t}^{(p/2)}\vee \tilde{\Theta}_{t}^{(p/2)}\right\} \leq \max_{n}||%
\check{z}||_{n,1}<\infty .
\end{equation*}

Next we prove ($a$): $\rho _{n}$ $\rightarrow $ $0$. If we verify Conditions
1 and 3 in \cite{ChangChenWu2024} then by their Theorem 3($i$),($ii$)\ and
the mapping theorem $\rho _{n}$ $=$ $O(g_{n})$ for some sequence $\{g_{n}\}$
to be characterized in Step 4 below. Condition 3 (non-degeneracy) holds by
Assumption \ref{assum:marg-reg}.c.

Consider Condition 1 (sub-exponential tails). Define 
\begin{equation}
\psi _{\zeta }(x):=\exp \{\left\vert x\right\vert ^{\zeta _{1}}\}-1,\text{ }%
\zeta _{1}>0.  \label{psi(x)}
\end{equation}%
We require constants $\{B_{n}\}$ with $\inf_{n\in \mathbb{N}}B_{n}$ $\geq $ $%
1$ and some finite $\zeta _{1}$ $\geq $ $1$ such that 
\begin{equation*}
\left\Vert z_{i,t}\right\Vert _{\psi _{\zeta _{1}}}:=\inf \left\{ \lambda >0:%
\mathbb{E}\left[ \psi _{\zeta _{1}}\left( z_{i,t}/\lambda \right) \right]
\leq 1\right\} \leq B_{n}\text{ }\forall t\in \{1,...,n\}\text{, }\forall
i\in \{1,...,p_{n}\}.
\end{equation*}%
Trivially $\mathbb{E}[\psi _{\zeta _{1}}\left( z_{i,t}/\lambda \right) ]$ $%
\leq $ $1$ implies $\max_{i,n,t}\mathbb{E}[\exp \{|z_{i,t}/\lambda |^{\zeta
_{1}}\}]$ $\leq $ $2$. The latter upper bound $2$ does not play any role in
the proof of \cite{ChangChenWu2024}'s Theorem 3, and can be replaced by any
finite $\varpi $ $>$ $0$. The chosen upper bound $\varpi $ affects $g_{n}$ $%
\rightarrow $ $0$ only by a scalar multiple, which is irrelevant here.%
\footnote{%
See \citet[proof of Theorem 1]{ChangChenWu2024}, see the proofs of their
supporting Lemmas 4-7, and consult their cited sources 
\citet[proof of Lemma
L1]{ChangJiangShao2023},\ \citet[Lemma A.2]{ChangTangWu2013}, and %
\citet[Theorem 6.2]{Rio2017}}

Thus, setting $B_{n}$ $=$ $1$, we need only show for some $\varpi $ $>$ $0$,%
\begin{equation}
\max_{i,n,t}\mathbb{E}\left[ \exp \{\left\vert z_{i,t}\right\vert ^{\zeta
_{1}}\}\right] \leq \varpi .  \label{Elt2}
\end{equation}%
We prove (\ref{Elt2}) in three steps: $\{z_{i,t}\}$ is $\mathcal{L}_{r/2}$-%
\emph{pd}, $r$ $\geq $ $4$; $z_{i,t}$ has sub-exponential tails; and then (%
\ref{Elt2}).\medskip

\textbf{Step 1 (}$z_{i,t}$ \textbf{physical dependence). }By (B.11) in %
\citet[proof of Lemma B.4]{sm_sig_pred}, for $r$ $\geq $ $4$: 
\begin{equation*}
\left\Vert z_{i,t}-z_{i,t}^{\prime }(m)\right\Vert _{r/2}\leq K\left\{
\theta _{i,t}^{(r)}(m)\vee \tilde{\theta}_{t}^{(r)}(m)\right\} :=K\check{%
\theta}_{i,t}^{(r)}(m).
\end{equation*}%
Now, by supposition and Minkowski and Cauchy-Schwartz inequalities,%
\begin{eqnarray}
&&\check{\theta}_{i,t}^{(r/2)}(m)\leq Kd_{i,t}^{(r/2)}\left\{ \mathcal{I}%
_{m=0}+m^{-\lambda -\iota }\mathcal{I}_{m>0}\right\} \text{ for some }r\geq 4%
\text{ and }\lambda >2  \label{theta_D} \\
&&  \notag \\
&&d_{i,t}^{(r/2)}\leq 2\left\Vert z_{i,t}\right\Vert _{r/2}\leq 2\left\{
\left\Vert x_{i,t}\right\Vert _{r}\vee \left\Vert y_{t}\right\Vert
_{r}\right\} \leq 2ar^{b}=2^{b}ar^{b},  \notag
\end{eqnarray}%
where the final inequality follows from Assumption \ref{assum:marg-reg}.a($%
ii $),b with $(a,b)$ $>$ $0$. By construction $z_{i,t}$ is therefore $%
\mathcal{L}_{r/2}$-\textit{pd}, $r$ $\geq $ $4$, with size $\lambda $%
.\medskip

\textbf{Step 2 (}$z_{i,t}$ \textbf{sub-exponential tails). }Define $\check{%
\theta}_{i,t}^{(r)}(m)$ $:=$ $\theta _{i,t}^{(r)}(m)\vee \tilde{\theta}%
_{t}^{(r)}(m)$ and the coefficient accumulations 
\begin{equation*}
\check{\Theta}_{i,n}^{(r)}(m):=\sum_{j=m}^{\infty }\max_{1\leq t\leq n}%
\check{\theta}_{i,t}^{(r)}(j)\text{ and }\check{\Theta}_{i}^{(r)}:=%
\limsup_{n\rightarrow \infty }\check{\Theta}_{i,n}^{(r)}(0).
\end{equation*}%
Under Assumption \ref{assum:marg-reg}.a($ii$), cf. (\ref{theta_D}), $\lim
\sup_{q\rightarrow \infty }q^{-b}\max_{i,n}\check{\Theta}_{i}^{(q)}$ $<$ $%
\infty $. Hence by the argument used to prove Lemma B.2.b, for some $%
\mathcal{C},\mathcal{K}$ $\in $ $(0,\infty )$ and $\alpha $ $\equiv $ $2/(2b$
$+$ $1)$, 
\begin{equation*}
\mathbb{P}\left( \left\vert \frac{1}{\sqrt{n}}\sum_{t=1}^{n}z_{i,t}\right%
\vert >u\right) \leq \mathcal{C}\exp \left\{ -\mathcal{K}u^{\alpha }\right\}
.
\end{equation*}

\textbf{Step 3 ((\ref{Elt2})). }By the arguments used to prove Lemma B.2(b),
and a change of variables yields for some finite $\tilde{c}_{1},\tilde{c}%
_{2} $ $>$ $0$, $\gamma _{1}/2$ $\geq $ $1$ and $\zeta _{1}$ $\in $ $(0,1)$, 
\begin{eqnarray*}
\max_{i,n,t}\mathbb{E}\left[ \exp \left\{ \left\vert z_{i,t}\right\vert
^{\zeta _{1}}\right\} \right] &=&1+\max_{i,n,t}\int_{1}^{\infty }\mathbb{P}%
\left( \left\vert z_{i,t}\right\vert >\left( \ln u\right) ^{1/\zeta
_{1}}\right) du \\
&=&1+\zeta _{1}\max_{i,n,t}\int_{1}^{\infty }\mathbb{P}\left( \left\vert
z_{i,t}\right\vert >v\right) \exp \left\{ v^{\zeta _{1}}\right\} v^{\zeta
_{1}-1}dv \\
&\leq &1+\zeta _{1}\tilde{c}_{1}\int_{1}^{\infty }\exp \{-\tilde{c}%
_{2}v^{\gamma _{1}/2}\}\exp \left\{ v^{\zeta _{1}}\right\} v^{\zeta _{1}-1}dv
\\
&\leq &1+\zeta _{1}\tilde{c}_{1}\int_{1}^{\infty }\exp \{v^{\zeta _{1}}-%
\tilde{c}_{2}v^{\gamma _{1}/2}\}dv.
\end{eqnarray*}%
It suffices to prove $\int_{1}^{\infty }\exp \left\{ v^{\zeta _{1}}-\tilde{c}%
_{2}v^{\gamma _{1}/2}\right\} dv\leq K$. Notice $\gamma _{1}$ $>$ $2\zeta
_{1}$ $\in $ $(0,2)$ given $\gamma _{1}/2$ $\geq $ $1$ and $\zeta _{1}$ $\in 
$ $(0,1)$. Hence, for some finite \underline{$v$} $>$ $1$ 
\begin{eqnarray*}
\int_{1}^{\infty }\exp \left\{ v^{\zeta _{1}}-\tilde{c}_{2}v^{\gamma
_{1}/2}\right\} dv &\leq &K+\int_{\underline{v}}^{\infty }\exp \left\{ -%
\frac{\tilde{c}_{2}}{2}v^{\gamma _{1}/2}\right\} dv \\
&\leq &K+\int_{\underline{v}}^{\infty }\exp \left\{ -\frac{\tilde{c}_{2}}{2}%
v\right\} dv=K+\frac{2}{\tilde{c}_{2}}\exp \left\{ -\frac{\tilde{c}_{2}}{2}%
\underline{v}\right\} .
\end{eqnarray*}%
This proves (\ref{Elt2}), hence $\rho _{n}$ $=$ $O(g_{n})$.\medskip

\textbf{Step 4 ($g_{n}$).} It remains to characterize sequences $\{g_{n}\}$
such that $g_{n}$ $\rightarrow $ $0$. Define \textit{dependence adjusted
norms} \citep[cf.][]{WuWu2016}: for $\alpha ,v$ $\in $ $(0,\infty )$, 
\begin{equation*}
\left\Vert z_{i,\cdot }\right\Vert _{n,r,\alpha }:=\sup_{m\geq
1}(m+1)^{\alpha }\check{\Theta}_{i,n}^{(r)}(m)\text{ and }\left\Vert
z_{i,\cdot }\right\Vert _{n,\psi _{\nu },\alpha }:=\sup_{r\geq
2}r^{-v}\left\Vert z_{i,\cdot }\right\Vert _{n,r,\alpha }
\end{equation*}%
and their aggregations $\Psi _{n,r,\alpha }$ $:=$ $\max_{1\leq i\leq
p_{n}}||z_{i,\cdot }||_{n,r,\alpha }$ and $\Phi _{n,\psi _{v},\alpha }$ $:=$ 
$\max_{1\leq i\leq p_{n}}||z_{i,\cdot }||_{n,\psi _{\nu },\alpha }$. Use (%
\ref{theta_D}) to deduce%
\begin{eqnarray*}
\check{\Theta}_{i,n}^{(r)}(m) &\leq &2\max_{1\leq t\leq n}\left\{ \left\Vert
x_{i,t}\right\Vert _{2r}\vee \left\Vert y_{t}\right\Vert _{2r}\right\}
\sum_{j=m}^{\infty }\left( \mathcal{I}_{j=0}+j^{-\lambda -\iota }\mathcal{I}%
_{j>0}\right) \\
&\leq &K\max_{1\leq t\leq n}\left\{ \left\Vert x_{i,t}\right\Vert _{2r}\vee
\left\Vert y_{t}\right\Vert _{2r}\right\} \times \left( \mathcal{I}%
_{m=0}+m^{-(\lambda +\iota )/2}\mathcal{I}_{m>0}\right) \\
&\leq &Kr^{b}\times \left( \mathcal{I}_{m=0}+m^{-(\lambda +\iota )/2}%
\mathcal{I}_{m>0}\right) .
\end{eqnarray*}%
Hence with $\alpha $ $=$ $\lambda /2$ $>$ $1$ and $\nu $ $=$ $b$, for some $%
\mathcal{K}_{\lambda }$ $>$ $0$ that depends on $\lambda $, 
\begin{eqnarray*}
&&\Psi _{n,r,\lambda /2}\leq K2^{\lambda /2}\max_{i,t}\left\{ \left\Vert
x_{i,t}\right\Vert _{r}\vee \left\Vert y_{t}\right\Vert _{r}\right\} =%
\mathcal{K}_{\lambda }\max_{i,t}\left\{ \left\Vert x_{i,t}\right\Vert
_{r}\vee \left\Vert y_{t}\right\Vert _{r}\right\} . \\
\text{\ } &&\Phi _{n,\psi _{b},\lambda /2}\leq \mathcal{K}_{\lambda
}\sup_{r\geq 2}\frac{1}{r^{b}}r^{b}=\mathcal{K}_{\lambda }.
\end{eqnarray*}

By Theorem 3($ii$) in \cite{ChangChenWu2024}, with their Condition 1 $B_{n}$ 
$=$ $1$, and with $\alpha $ $=$ $\lambda /2$ and $\nu $ $=$ $b$,%
\begin{eqnarray*}
g_{n} &=&\frac{\text{\ }\left( \ln p_{n}\right) ^{7/6}}{n^{\lambda
/(24+6\lambda )}}+\frac{\Psi _{n,2,\lambda /2}^{1/3}\Psi
_{n,2,0}^{1/3}\left( \ln p_{n}\right) ^{2/3}}{n^{\lambda /(24+6\lambda )}}+%
\frac{\Phi _{n,\psi _{b},\lambda /2}\left( \ln p_{n}\right) ^{1+b}}{%
n^{\lambda /(8+2\lambda )}} \\
&\leq &\frac{\text{\ }\left( \ln p_{n}\right) ^{7/6}}{n^{\lambda
/(24+6\lambda )}}+\frac{\mathcal{K}_{\lambda }^{1/3}\mathcal{K}%
_{0}^{1/3}\max_{i,t}\left\{ \left\Vert x_{i,t}\right\Vert _{4}\vee
\left\Vert y_{t}\right\Vert _{4}\right\} ^{2/3}\left( \ln p_{n}\right) ^{2/3}%
}{n^{\lambda /(24+6\lambda )}}+\frac{\mathcal{K}_{\lambda }\left( \ln
p_{n}\right) ^{1+b}}{n^{\lambda /(8+2\lambda )}} \\
&\leq &K\max_{i,t}\left\{ \left\Vert x_{i,t}\right\Vert _{4}\vee \left\Vert
y_{t}\right\Vert _{4}\right\} ^{2/3}\frac{\left( \ln p_{n}\right)
^{(7/6)\vee (1+b)}}{n^{\lambda /(8+2\lambda )}}\rightarrow 0
\end{eqnarray*}%
for $\{p_{n}\}$ satisfying%
\begin{equation*}
\ln (p_{n})=o\left( \left( \frac{n^{\frac{3\lambda /2}{8+2\lambda }}}{%
\max_{i,t}\left\{ \left\Vert x_{i,t}\right\Vert _{4}\vee \left\Vert
y_{t}\right\Vert _{4}\right\} }\right) ^{\frac{2}{3\left\{ (7/6)\vee
(1+b)\right\} }}\right) .
\end{equation*}

Moreover, $\limsup_{r\rightarrow \infty }r^{1/2-1/\alpha
}\limsup_{n\rightarrow \infty }\max_{i,n}\check{\Theta}_{i}^{(q)}$ $<$ $%
\infty $ with $\alpha $ $=$ $2/(2b$ $+$ $1)$ follows instantly from $\lim
\sup_{n\rightarrow \infty }||\check{z}||_{n,r}$ $\leq $ $ar^{b}$ for finite $%
a>0$ and $b$ $\in $ $[0,\infty )$ by the proof of Lemma \ref{lm:mr_ng_phys}%
.b. Combined with non-degeneracy it follows $\max_{i,t}\{||x_{i,t}||_{4}\vee
||y_{t}||_{4}\}$ $\in $ $(0,\infty )$ uniformly in $n$. Hence $p_{n}$ must
satisfy%
\begin{equation*}
\ln (p_{n})=o\left( n^{\frac{3\lambda /2}{8+2\lambda }\frac{2}{3\left\{
(7/6)\vee (1+b)\right\} }}\right) =o\left( n^{\frac{\lambda }{8+2\lambda }%
\frac{1}{(7/6)\vee (1+b)}}\right) .\text{ }\blacksquare
\end{equation*}%
\textbf{Proof of Theorem \ref{thm:mr_gauss_phys}}. First observe that%
\begin{eqnarray*}
&&\left\vert \max_{1\leq i\leq p_{n}}\left\vert \sqrt{n}\mathcal{W}_{i,n}%
\hat{\phi}_{i,n}\right\vert -\max_{1\leq i\leq p_{n}}\left\vert \frac{1}{%
\sqrt{n}}\sum_{t=1}^{n}\left[ 0,1\right] \mathcal{W}_{i}\mathcal{H}_{i}^{-1}%
\mathbf{x}_{i,t}v_{t}\right\vert \right\vert \\
&&\text{ \ \ \ \ \ \ \ \ \ \ \ \ \ \ \ }\leq \max_{1\leq i\leq
p_{n}}\left\vert \mathcal{W}_{i,n}\right\vert \max_{1\leq i\leq
p_{n}}\left\vert \sqrt{n}\hat{\phi}_{i,n}-\frac{1}{\sqrt{n}}\sum_{t=1}^{n}%
\left[ 0,1\right] \mathcal{W}_{i}\mathcal{H}_{i}^{-1}\mathbf{x}%
_{i,t}v_{t}\right\vert \\
&&\text{ \ \ \ \ \ \ \ \ \ \ \ \ \ \ \ \ \ \ \ \ \ \ \ \ \ \ \ \ \ \ \ \ \ \ 
}+\max_{1\leq i\leq p_{n}}\left\vert \mathcal{W}_{i,n}-\mathcal{W}%
_{i}\right\vert \max_{1\leq i\leq p_{n}}\left\vert \frac{1}{\sqrt{n}}%
\sum_{t=1}^{n}\left[ 0,1\right] \mathcal{H}_{i}^{-1}\mathbf{x}%
_{i,t}v_{t}\right\vert \\
&&\text{ \ \ \ \ \ \ \ \ \ \ \ \ \ }=O_{p}\left( \frac{\left( \ln
(p_{n})\right) ^{2}}{\sqrt{n}}\right) +o_{p}\left( \frac{1}{\ln (p_{n})}%
\right) \times O_{p}\left( \ln (p_{n})\right) =o_{p}(1).
\end{eqnarray*}%
The last line exploits Lemma \ref{lm:mr_ng_phys}, the triangle inequality
and supposition $\max_{1\leq i\leq p_{n}}|\mathcal{W}_{i,n}$ $-$ $\mathcal{W}%
_{i}|$ $=$ $o_{p}(1/\ln (p_{n}))$, and $\max_{1\leq i\leq p_{n}}|1/\sqrt{n}%
\sum_{i=1}^{n}x_{i,t}v_{t}|$ $=$ $O_{p}(\ln (p_{n}))$ under $H_{0}$ by Lemma
B.4. Now invoke Lemma \ref{lm:mr_gauss_phys} to yield $\max_{1\leq i\leq
p_{n}}|\sqrt{n}\mathcal{W}_{i,n}\hat{\phi}_{i,n}|$ $\overset{d}{\rightarrow }
$ $\max_{i\in \mathbb{N}}|\mathcal{W}_{i}\mathcal{Z}(i)|$ when%
\begin{equation*}
\ln (p_{n})=o\left( n^{\frac{1}{4}\wedge \left( \frac{\lambda }{8+2\lambda }%
\frac{1}{(7/6)\vee (1+b)}\right) }\right) .
\end{equation*}%
This reduces to $\ln (p_{n})$ $=$ $o(n^{s(b,\lambda )})$ where after
inspecting $\frac{1}{4}$ $\wedge $ $\left( \frac{\lambda }{8+2\lambda }\frac{%
1}{(7/6)\vee (1+b)}\right) $, by case:\medskip

$i$. $b$ $\in $ $(0,1/6]$ then $s(b,\lambda )$ $=$ $1/4$ if $\lambda $ $\geq 
$ $28/5$ else $s(b,\lambda )=\frac{\lambda }{8+2\lambda }\frac{1}{(7/6)\vee
(1+b)}$.\medskip

$ii$. $b$ $\in $ $(1/6,1)$ then $s(b,\lambda )$ $=$ $1/4$ if $\lambda $ $%
\geq $ $\frac{4}{\frac{2}{1+b}-1}$ else $s(b,\lambda )=\frac{\lambda }{%
8+2\lambda }\frac{1}{(7/6)\vee (1+b)}$.\medskip

$iii$. $b$ $\geq $ $1$ then $s(b,\lambda )=\frac{\lambda }{8+2\lambda }\frac{%
1}{1+b}$. $\blacksquare $\bigskip \newline
\textbf{Proof of Lemma \ref{lm:mr_boot_expand}. }Since $\mathbf{x}_{i,t}$ $=$
$x_{i,t}$ we may drop the selection vector $[0,1]$. Recall $v_{t}$ $=$ $%
y_{t} $ $-$ $\mathbb{E}y_{t}$ and%
\begin{equation*}
\widehat{\mathcal{\tilde{G}}}_{i,n}:=\frac{1}{n}\sum_{t=1}^{n}\eta
_{t}\left\{ x_{i,t}\left( y_{t}-\bar{y}_{n}\right) -\frac{1}{n}%
\sum_{r=1}^{n}x_{i,r}\left( y_{r}-\bar{y}_{n}\right) \right\} \text{ and }%
\mathcal{\tilde{G}}_{i,n}:=\frac{1}{n}\sum_{t=1}^{n}\eta _{t}\left\{
x_{i,t}v_{t}-\mathbb{E}x_{i,t}v_{t}\right\} .
\end{equation*}%
By adding and subtracting like terms, and by the triangle inequality,%
\begin{eqnarray}
\left\vert \max_{1\leq i\leq p_{n}}\left\vert \sqrt{n}\widehat{\mathcal{H}}%
_{i,n}^{-1}\widehat{\mathcal{\tilde{G}}}_{i,n}\right\vert -\max_{1\leq i\leq
p_{n}}\left\vert \sqrt{n}\mathcal{H}_{i}^{-1}\mathcal{\tilde{G}}%
_{i,n}\right\vert \right\vert &\leq &\max_{1\leq i\leq p_{n}}\left\vert 
\sqrt{n}\widehat{\mathcal{H}}_{i,n}^{-1}\widehat{\mathcal{\tilde{G}}}_{i,n}-%
\sqrt{n}\mathcal{H}_{i}^{-1}\mathcal{\tilde{G}}_{i,n}\right\vert
\label{HG_HG} \\
&\leq &\max_{1\leq i\leq p_{n}}\left\vert \mathcal{H}_{i}^{-1}\right\vert 
\sqrt{n}\max_{1\leq i\leq p_{n}}\left\vert \widehat{\mathcal{\tilde{G}}}%
_{i,n}-\mathcal{\tilde{G}}_{i,n}\right\vert  \notag \\
&&+\max_{1\leq i\leq p_{n}}\left\vert \widehat{\mathcal{H}}_{i,n}^{-1}-%
\mathcal{H}_{i}^{-1}\right\vert \sqrt{n}\max_{1\leq i\leq p_{n}}\left\vert 
\mathcal{\tilde{G}}_{i,n}\right\vert  \notag \\
&&+\max_{1\leq i\leq p_{n}}\left\vert \widehat{\mathcal{H}}_{i,n}^{-1}-%
\mathcal{H}_{i}^{-1}\right\vert \sqrt{n}\max_{1\leq i\leq p_{n}}\left\vert 
\widehat{\mathcal{\tilde{G}}}_{i,n}-\mathcal{\tilde{G}}_{i,n}\right\vert . 
\notag
\end{eqnarray}%
We first bound $\max_{1\leq i\leq p_{n}}|\mathcal{H}_{i}^{-1}|$, $%
\max_{1\leq i\leq p_{n}}|\widehat{\mathcal{H}}_{i,n}^{-1}$ $-$ $\mathcal{H}%
_{i}^{-1}|$ and%
\begin{eqnarray}
&&\sqrt{n}\max_{1\leq i\leq p_{n}}\left\vert \widehat{\mathcal{\tilde{G}}}%
_{i,n}-\mathcal{\tilde{G}}_{i,n}\right\vert  \label{etax} \\
&&\text{ \ \ \ \ \ \ \ }\leq \left\vert \frac{1}{\sqrt{n}}\sum_{t=1}^{n}\eta
_{t}\right\vert \times \max_{1\leq i\leq p_{n}}\left\vert \frac{1}{n}%
\sum_{t=1}^{n}\left\{ x_{i,t}v_{t}-\mathbb{E}x_{i,t}v_{t}\right\} \right\vert
\notag \\
&&\text{ \ \ \ \ \ \ \ \ \ \ \ \ \ }+\left\vert \bar{y}_{n}-\mathbb{E}%
y_{t}\right\vert \times \left\vert \frac{1}{\sqrt{n}}\sum_{t=1}^{n}\eta
_{t}\right\vert \times \max_{1\leq i\leq p_{n}}\left\vert \frac{1}{n}%
\sum_{t=1}^{n}\left\{ x_{i,t}-\mathbb{E}x_{i,t}\right\} \right\vert  \notag
\\
&&\text{ \ \ \ \ \ \ \ \ \ \ \ \ \ }+\left\vert \bar{y}_{n}-\mathbb{E}%
y_{t}\right\vert \times \left\vert \frac{1}{\sqrt{n}}\sum_{t=1}^{n}\eta
_{t}\right\vert \times \max_{i,n,t}\left\vert \mathbb{E}x_{i,t}\right\vert 
\notag \\
&&\text{ \ \ \ \ \ \ \ \ \ \ \ \ \ }+\left\vert \bar{y}_{n}-\mathbb{E}%
y_{t}\right\vert \times \max_{1\leq i\leq p_{n}}\left\vert \frac{1}{\sqrt{n}}%
\sum_{t=1}^{n}\eta _{t}x_{i,t}\right\vert .  \notag
\end{eqnarray}%
Assumption \ref{assum:marg-reg}.a,c and Lemma B.4.a yield%
\begin{equation*}
\max_{1\leq i\leq p_{n}}\left\vert \widehat{\mathcal{H}}_{i,n}^{-1}-\mathcal{%
H}_{i}^{-1}\right\vert \leq \max_{1\leq i\leq p_{n}}\left\vert \widehat{%
\mathcal{H}}_{i,n}^{-1}\right\vert \max_{1\leq i\leq p_{n}}\left\vert 
\mathcal{H}_{i}^{-1}\right\vert \times \max_{1\leq i\leq p_{n}}\left\vert 
\widehat{\mathcal{H}}_{i,n}-\mathcal{H}_{i}\right\vert =O_{p}\left( \ln
(p_{n})/\sqrt{n}\right) .
\end{equation*}%
Moreover, by $\bar{y}_{n}$ $-$ $\mathbb{E}y_{t}$ $=$ $O_{p}(1/\sqrt{n})$
from Lemma B.3.a, second order stationarity and (\ref{PHI_z}):%
\begin{equation*}
\left\Vert \bar{y}_{n}-\mathbb{E}y_{t}\right\Vert _{2}=\left\Vert \frac{1}{n}%
\sum_{t=1}^{n}\left( y_{t}-\mathbb{E}y_{t}\right) \right\Vert _{2}\leq
2\left\Vert y_{t}\right\Vert _{2}/\sqrt{n}=O\left( 1/\sqrt{n}\right) .
\end{equation*}%
Further, Assumption \ref{assum:marg-reg}.a, and Lemmas B.3 and\ B.4 yield%
\begin{equation*}
\left\{ \max_{1\leq i\leq p_{n}}\left\vert \frac{1}{n}\sum_{t=1}^{n}\left\{
x_{i,t}-\mathbb{E}x_{i,t}\right\} \right\vert ,\max_{1\leq i\leq
p_{n}}\left\vert \frac{1}{n}\sum_{t=1}^{n}\left\{ x_{i,t}v_{t}-\mathbb{E}%
x_{i,t}v_{t}\right\} \right\vert \right\} =O_{p}\left( \ln (p_{n})/\sqrt{n}%
\right) .
\end{equation*}

Now turn to $\max_{1\leq i\leq p_{n}}|1/\sqrt{n}\sum_{t=1}^{n}\eta
_{t}x_{i,t}|$ in (\ref{etax}). By constructions of $\mathcal{N}_{n}$ $=$ $%
n/b_{n}$ and $\eta _{t}$, for iid zero mean $\xi _{s}$ with $\mathbb{E}\xi
_{s}^{2}$ $=$ $1$,%
\begin{eqnarray*}
&&\max_{1\leq i\leq p_{n}}\left\vert \frac{1}{\sqrt{n}}\sum_{t=1}^{n}\eta
_{t}x_{i,t}\right\vert \\
&&\text{ \ \ \ \ \ \ \ }=\max_{1\leq i\leq p_{n}}\left\vert \frac{1}{\sqrt{%
\mathcal{N}_{n}}}\sum_{s=1}^{\mathcal{N}_{n}}\xi _{s}\frac{1}{\sqrt{b_{n}}}%
\sum_{t=(s-1)b_{n}+1}^{sb_{n}}\left( x_{i,t}-\mathbb{E}x_{i,t}\right)
\right\vert +\max_{i,n,t}\left\vert \mathbb{E}x_{i,t}\right\vert \times
\left\vert \frac{1}{\sqrt{\mathcal{N}_{n}}}\sum_{s=1}^{\mathcal{N}_{n}}\xi
_{s}\right\vert \\
&&\text{ \ \ \ \ \ \ \ }=\max_{1\leq i\leq p_{n}}\left\vert \frac{1}{\sqrt{%
\mathcal{N}_{n}}}\sum_{s=1}^{\mathcal{N}_{n}}\xi _{s}\omega
_{i,n,s}\right\vert +\max_{i,n,t}\left\vert \mathbb{E}x_{i,t}\right\vert
\times \left\vert \frac{1}{\sqrt{\mathcal{N}_{n}}}\sum_{s=1}^{\mathcal{N}%
_{n}}\xi _{s}\right\vert ,
\end{eqnarray*}%
where $\omega _{i,n,s}$ is implicit. Assumption \ref{assum:marg-reg}.a
implies $\max_{i,n,t}\left\vert \mathbb{E}x_{i,t}\right\vert $ $<$ $\infty $%
. By construction $|1/\sqrt{\mathcal{N}_{n}}\sum_{s=1}^{\mathcal{N}_{n}}\xi
_{s}|$ $=$ $O_{p}(1)$. Next, by Nemirovski's $\mathcal{L}_{q}$-moment bound 
\citep[see, e.g.,][Theorem
14.24]{BuhlmannVanDeGeer2011}, and $\mathbb{E}\xi _{s}^{2}$ $=$ $1$,
conditional on the sample $\mathfrak{S}_{n}$ $:=$ $\{x_{t},y_{t}\}_{t=1}^{n}$
\begin{eqnarray*}
\mathbb{E}\left[ \max_{1\leq i\leq p_{n}}\left\vert \frac{1}{\sqrt{\mathcal{N%
}_{n}}}\sum_{s=1}^{\mathcal{N}_{n}}\xi _{s}\omega _{i,n,s}\right\vert |%
\mathfrak{S}_{n}\right] &\leq &8\ln (2p_{n})\times \left( \max_{1\leq i\leq
p_{n}}\left\{ \frac{1}{\sqrt{\mathcal{N}_{n}}}\sum_{s=1}^{\mathcal{N}%
_{n}}\omega _{i,n,s}^{2}\right\} \right) ^{1/2} \\
&\leq &8\ln (2p_{n})\times \left( n/b_{n}\right) ^{1/4}\max_{i,s}\left\vert
\omega _{i,n,s}\right\vert .
\end{eqnarray*}%
Now let $\{\mathcal{\tilde{M}}_{n}(j)\}_{j=1}^{p_{n}\mathcal{N}_{n}}$ denote
the array formed by stacking over $i$ and $s$,%
\begin{equation*}
\mathcal{M}_{n}(i,s):=\frac{1}{\sqrt{b_{n}}}\sum_{t=(s-1)b_{n}+1}^{sb_{n}}%
\left( x_{i,t}-\mathbb{E}x_{i,t}\right)
\end{equation*}%
with index correspondence $j$ $=$ $(i$ $-$ $1)p_{n}$ $+$ $s$.\footnote{$\{%
\mathcal{\tilde{M}}_{n}(j)\}_{j=1}^{\mathcal{N}_{n}}$ $=$ $\{\mathcal{M}%
_{n}(1,s)\}_{s=1}^{\mathcal{N}_{n}}$, $\{\mathcal{\tilde{M}}_{n}(j)\}_{j=%
\mathcal{N}_{n}+1}^{2\mathcal{N}_{n}}$ $=$ $\{\mathcal{M}_{n}(2,s)\}_{s=1}^{%
\mathcal{N}_{n}}$, etc.} Then, under Assumption \ref{assum:marg-reg}.a and
by the proofs of Lemmas B.2.c or B.3.b, 
\begin{equation*}
\max_{i,s}\left\vert \omega _{i,n,s}\right\vert =\max_{i,s}\left\vert \frac{1%
}{\sqrt{b_{n}}}\sum_{t=(s-1)b_{n}+1}^{sb_{n}}\left( x_{i,t}-\mathbb{E}%
x_{i,t}\right) \right\vert =\max_{1\leq j\leq p_{n}\mathcal{N}%
_{n}}\left\vert \mathcal{\tilde{M}}_{n}(j)\right\vert =O_{p}\left( \ln (p_{n}%
\mathcal{N}_{n})\right) .
\end{equation*}%
Hence%
\begin{equation}
\max_{1\leq i\leq p_{n}}\left\vert \frac{1}{\sqrt{n}}\sum_{t=1}^{n}\eta
_{t}x_{i,t}\right\vert =O_{p}\left( \left( n/b_{n}\right) ^{1/4}\ln (p_{n}%
\mathcal{N}_{n})\right) .  \label{rt(n)etax}
\end{equation}%
Combine the above bounds to yield%
\begin{equation*}
\sqrt{n}\max_{1\leq i\leq p_{n}}\left\vert \widehat{\mathcal{\tilde{G}}}%
_{i,n}-\mathcal{\tilde{G}}_{i,n}\right\vert =O_{p}\left( \frac{\ln (p_{n})}{%
\sqrt{n}}\right) +O_{p}\left( \frac{1}{\sqrt{n}}\left( \frac{n}{b_{n}}%
\right) ^{1/4}\ln (p_{n}\mathcal{N}_{n})\right) =O_{p}\left( \frac{\ln (p_{n}%
\mathcal{N}_{n})}{n^{1/4}b_{n}^{1/4}}\right) .\text{ \ }
\end{equation*}

It remains to bound $\max_{1\leq i\leq p_{n}}|\sqrt{n}\mathcal{\tilde{G}}%
_{i,n}|$ in (\ref{HG_HG}). The same logic leading to (\ref{rt(n)etax})
yields 
\begin{equation*}
\max_{1\leq i\leq p_{n}}\left\vert \sqrt{n}\mathcal{\tilde{G}}%
_{i,n}\right\vert =O_{p}\left( \left( n/b_{n}\right) ^{1/4}\ln (p_{n}%
\mathcal{N}_{n})\right) .
\end{equation*}%
Thus given $\ln (p_{n})$ $\times $ $\{1+\ln (n)/\ln (p_{n})\}^{1/2}$ $=$ $%
o(n^{1/8}b_{n}^{1/8})$ it follows%
\begin{eqnarray*}
&&\left\vert \max_{1\leq i\leq p_{n}}\left\vert \sqrt{n}\widehat{\mathcal{H}}%
_{i,n}^{-1}\widehat{\mathcal{\tilde{G}}}_{i,n}\right\vert -\max_{1\leq i\leq
p_{n}}\left\vert \sqrt{n}\mathcal{H}_{i}^{-1}\mathcal{\tilde{G}}%
_{i,n}\right\vert \right\vert \\
&&\text{ \ \ \ \ \ \ \ \ }=O_{p}\left( \frac{\ln (p_{n}\mathcal{N}_{n})}{%
n^{1/4}b_{n}^{1/4}}\right) +O_{p}\left( \frac{\ln (p_{n})}{\sqrt{n}}\left( 
\frac{n}{b_{n}}\right) ^{1/4}\ln (p_{n}\mathcal{N}_{n})\right) +O_{p}\left( 
\frac{\ln (p_{n})}{\sqrt{n}}\frac{\ln (p_{n}\mathcal{N}_{n})}{%
n^{1/4}b_{n}^{1/4}}\right) \text{ \ \ \ } \\
&&\text{ \ \ \ \ \ \ \ \ }=O_{p}\left( \frac{\ln (p_{n})\times \ln (p_{n}%
\mathcal{N}_{n})}{n^{1/4}b_{n}^{1/4}}\right) =O_{p}\left( \frac{\ln
(p_{n})\times \ln (p_{n}n)}{n^{1/4}b_{n}^{1/4}}\right) =o_{p}(1).\text{ }%
\blacksquare
\end{eqnarray*}%
\textbf{Proof of Lemma \ref{lm:mr_boot_gauss}.} Recall $\mathcal{\tilde{G}}%
_{i,n}$ $=$ $1/n\sum_{t=1}^{n}\eta _{t}\{\mathbf{x}_{i,t}v_{t}$ $-$ $\mathbb{%
E}\mathbf{x}_{i,t}v_{t}\}$ with $v_{t}$ $=$ $y_{t}$ $-$ $\mathbb{E}y_{t}$.
Write $w_{i,t}$ $:=$ $\mathbf{x}_{i,t}v_{t}$ $-$ $\mathbb{E}\mathbf{x}%
_{i,t}v_{t}.$ Recall $\mathbf{x}_{i,t}$ $=$ $x_{i,t}$ $\in $ $\mathbb{R}$,
and drop $[0,1]$. We prove ($a$) and ($b$), and then strengthen Assumption %
\ref{assum_mr_boot}.b as a sufficient condition for Assumption \ref%
{assum_mr_boot}.c.\medskip \newline
\textbf{Claim (a).} Define%
\begin{equation*}
s_{n}^{2}(i,j):=\mathbb{E}\left[ \sqrt{n}\mathcal{H}_{i}^{-1}\mathcal{\tilde{%
G}}_{i,n}\times \sqrt{n}\mathcal{\tilde{G}}_{j,n}\mathcal{H}_{j}^{-1}|%
\mathfrak{S}_{n}\right] \text{ and }\mathcal{W}_{n,s}(i):=%
\sum_{t=(s-1)b_{n}+1}^{sb_{n}}w_{i,t}.
\end{equation*}%
By construction of $\{\eta _{t},\xi _{s}\}$,%
\begin{eqnarray*}
s_{n}^{2}(i,j) &=&\mathcal{H}_{i}^{-1}\frac{1}{n}\sum_{s=1}^{\mathcal{N}%
_{n}}\xi _{s}^{2}\left( \sum_{t_{1}=(s-1)b_{n}+1}^{sb_{n}}w_{i,t_{1}}\right)
\left( \sum_{t_{2}=(s-1)b_{n}+1}^{sb_{n}}w_{j,t_{2}}\right) \mathcal{H}%
_{j}^{-1} \\
&=&\mathcal{H}_{i}^{-1}\left\{ \frac{1}{n}\sum_{s=1}^{\mathcal{N}_{n}}\xi
_{s}^{2}\mathcal{W}_{n,s}(i)\mathcal{W}_{n,s}(j)\right\} \mathcal{H}%
_{j}^{-1}.
\end{eqnarray*}%
Define also%
\begin{equation*}
\sigma _{n}^{2}(i,j):=\mathbb{E}\left[ \frac{1}{\sqrt{n}}\sum_{t=1}^{n}%
\mathcal{H}_{i}^{-1}w_{i,t}\times \frac{1}{\sqrt{n}}\sum_{t=1}^{n}w_{j,t}%
\mathcal{H}_{j}^{-1}\right] .
\end{equation*}%
Thus, by construction of the bootstrap multiplier $\eta _{t}$,%
\begin{equation*}
\sqrt{n}\mathcal{H}_{i}^{-1}\mathcal{\tilde{G}}_{i,n}|\mathfrak{S}_{n}=\frac{%
1}{\sqrt{n}}\sum_{t=1}^{n}\mathcal{H}_{i}^{-1}\eta _{t}w_{i,t}|\mathfrak{S}%
_{n}\sim \mathcal{N}(0,s_{n}^{2}(i)).
\end{equation*}

Define%
\begin{equation*}
\Delta _{n}:=\max_{1\leq i,j\leq p_{n}}\left\vert s_{n}^{2}(i,j)-\sigma
_{n}^{2}(i,j)\right\vert .
\end{equation*}%
By Lemma 3.1 in \cite{Chernozhukov_etal2013}, 
\citep[cf.][Theorem
2]{Chernozhukov_etal2015},%
\begin{eqnarray*}
&&\sup_{c\geq 0}\left\vert \mathbb{P}\left( \max_{1\leq i\leq
p_{n}}\left\vert \sqrt{n}\mathcal{H}_{i}^{-1}\mathcal{\tilde{G}}%
_{i,n}\right\vert >c|\mathfrak{S}_{n}\right) -\mathbb{P}\left( \max_{1\leq
i\leq p_{n}}\left\vert \mathcal{\tilde{Z}}_{n}(i)\right\vert >c\right)
\right\vert \\
&&\text{ \ \ \ \ \ \ \ \ \ \ \ \ \ \ \ \ \ \ \ \ \ \ \ \ \ \ \ \ \ \ \ \ \ \
\ }\leq K\Delta _{n}^{1/3}\left( 1\vee \left[ \ln \left( p_{n}/\Delta
_{n}\right) \right] \right) ^{2/3}.
\end{eqnarray*}%
We need only show $\Delta _{n}$ $=$ $o_{p}(\ln (p_{n})^{2})$. See the remark
following Theorem 2, cf. Proposition 1, in \cite{Chernozhukov_etal2015}; and
see \citet[Lemma C.5]{ChenKato2019}. Non-degeneracy Assumption \ref%
{assum:marg-reg}.c implies%
\begin{eqnarray*}
\Delta _{n} &\leq &\max_{1\leq i,j\leq p_{n}}\left\vert \mathcal{H}_{i}^{-2}%
\frac{1}{n}\sum_{s=1}^{\mathcal{N}_{n}}\xi _{s}^{2}\mathcal{W}_{n,s}(i)%
\mathcal{W}_{n,s}(j)-\mathcal{H}_{i}^{-2}\frac{1}{n}\sum_{t_{1},t_{2}=1}^{n}%
\mathbb{E}\left[ w_{i,t_{1}}w_{j,t_{2}}\right] \right\vert \\
&\leq &K\max_{1\leq i,j\leq p_{n}}\left\vert \frac{1}{n}\sum_{s=1}^{\mathcal{%
N}_{n}}\xi _{s}^{2}\mathcal{W}_{n,s}(i)\mathcal{W}_{n,s}(j)-\frac{1}{n}%
\sum_{t_{1},t_{2}=1}^{n}\mathbb{E}\left[ w_{i,t_{1}}w_{j,t_{2}}\right]
\right\vert \\
&\leq &K\left( \mathcal{D}_{1,n}+\mathcal{D}_{2,n}+\mathcal{D}_{2,n}\right) ,
\end{eqnarray*}%
where%
\begin{eqnarray*}
&&\mathcal{D}_{1,n}:=\max_{1\leq i,j\leq p_{n}}\left\vert \frac{1}{n}%
\sum_{s=1}^{\mathcal{N}_{n}}\left( \xi _{s}^{2}-1\right) \mathcal{W}_{n,s}(i)%
\mathcal{W}_{n,s}(j)\right\vert \\
&&\mathcal{D}_{2,n}:=\max_{1\leq i,j\leq p_{n}}\left\vert \frac{1}{n}%
\sum_{s=1}^{\mathcal{N}_{n}}\left( \mathcal{W}_{n,s}(i)\mathcal{W}_{n,s}(j)-%
\mathbb{E}\left[ \mathcal{W}_{n,s}(i)\mathcal{W}_{n,s}(j)\right] \right)
\right\vert \\
&&\mathcal{D}_{3,n}:=\max_{1\leq i,j\leq p_{n}}\left\vert \frac{1}{n}%
\sum_{s=1}^{\mathcal{N}_{n}}\mathbb{E}\left[ \mathcal{W}_{n,s}(i)\mathcal{W}%
_{n,s}(j)\right] -\frac{1}{n}\sum_{t_{1},t_{2}=1}^{n}\mathbb{E}\left[
w_{i,t_{1}}w_{j,t_{2}}\right] \right\vert .
\end{eqnarray*}%
It suffices to prove each $\mathcal{D}_{j,n}$ $=$ $O(n^{-\iota })$ for some
tiny $\iota $ $>$ $0$.

Consider $\mathcal{D}_{1,n}$ and $\mathcal{D}_{2,n}$. Recall $||\check{z}%
||_{n,p}$ $:=$ $\max_{i,t}\{||x_{i,t}||_{p}$ $\vee $ $||y_{t}||_{p}\}$. By
the proof of Lemma B.4.b, $w_{i,t}$ $=$ $x_{i,t}v_{t}$ $-$ $\mathbb{E}%
x_{i,t}v_{t}$ is $\mathcal{L}_{r/2}$-\emph{pd} with coefficients $\check{%
\theta}_{i,t}^{(p/2)}(m)$ $:=$ $K\{\theta _{i,t}^{(p/2)}(m)\vee \tilde{\theta%
}_{t}^{(p/2)}(m)\}$ of size $\lambda $ $\geq $ $2$. By Minkowsi's
inequality, and coefficient bounds (\ref{thmth}) and $d_{i,t}^{(r)}$ $\leq $ 
$K\{\left\Vert x_{i,t}\right\Vert _{r}$ $\vee $ $\left\Vert y_{t}\right\Vert
_{r}\}$, cf. (\ref{thmth}), $b_{n}^{-1}\mathcal{W}_{n,s}(i)$ is $\mathcal{L}%
_{r/2}$-\emph{pd} with coefficients 
\begin{equation*}
\check{\theta}_{i,b_{n},s}^{(p/2)}(m):=\frac{1}{b_{n}}%
\sum_{t=(s-1)b_{n}+1}^{sb_{n}}\check{\theta}_{i,t}^{(p/2)}(m)\leq \frac{1}{%
b_{n}}\sum_{t=(s-1)b_{n}+1}^{sb_{n}}d_{i,t}^{(p/2)}\times \psi _{i,m}\leq
K\left\Vert \check{z}\right\Vert _{n,p/2}\times \psi _{i,m}
\end{equation*}%
where $\max_{i,n}\psi _{i,m}$ $=$ $O(m^{-\lambda -\iota })$. Moreover,
mean-zero iid $\xi _{s}^{2}$ $-$ $1$ is trivially $\mathcal{L}_{r/4}$-\emph{%
pd} with $m$-displacement coefficients $||\xi _{s}^{2}||_{p/4}\mathcal{I}%
_{m=0}$. Now exploit the argument used to prove Lemma B.4.b to deduce $(\xi
_{s}^{2}$ $-$ $1)b_{n}^{-1}\mathcal{W}_{n,s}(i)b_{n}^{-1}\mathcal{W}%
_{n,s}(j) $ is $\mathcal{L}_{r/4}$-\emph{pd} with coefficients%
\begin{equation*}
K\left\{ \left\Vert \xi _{s}^{2}\right\Vert _{p/4}\mathcal{I}_{m=0}\vee 
\check{\theta}_{i,b_{n},s}^{(p/4)}(m)\vee \check{\theta}%
_{j,b_{n},s}^{(p/4)}(m)\right\} \leq K\left\Vert \check{z}\right\Vert
_{n,p/4}\left\{ \psi _{i,m}\vee \psi _{j,m}\right\} .
\end{equation*}

Assume $r$ $\geq $ $8$ and note by $\mathcal{N}_{n}$ $=$ $n/b_{n}$, 
\begin{eqnarray*}
&&\mathcal{D}_{1,n}=b_{n}\max_{1\leq i,j\leq p_{n}}\left\vert \frac{1}{%
\mathcal{N}_{n}}\sum_{s=1}^{\mathcal{N}_{n}}\left( \xi _{s}^{2}-1\right) 
\frac{\mathcal{W}_{n,s}(i)}{b_{n}}\frac{\mathcal{W}_{n,s}(j)}{b_{n}}%
\right\vert \\
&&\mathcal{D}_{2,n}=b_{n}\max_{1\leq i,j\leq p_{n}}\left\vert \frac{1}{%
\mathcal{N}_{n}}\sum_{s=1}^{\mathcal{N}_{n}}\left( \frac{\mathcal{W}_{n,s}(i)%
}{b_{n}}\frac{\mathcal{W}_{n,s}(j)}{b_{n}}-\mathbb{E}\left[ \frac{\mathcal{W}%
_{n,s}(i)}{b_{n}}\frac{\mathcal{W}_{n,s}(j)}{b_{n}}\right] \right)
\right\vert .
\end{eqnarray*}%
Then, with Assumption \ref{assum:marg-reg}.a we may apply Lemma B.3.b to $%
(\xi _{s}^{2}$ $-$ $1)\mathcal{W}_{n,s}(i)\mathcal{W}_{n,s}(j)$ and $%
\mathcal{W}_{n,s}(i)\mathcal{W}_{n,s}(j)$ $-$ $\mathbb{E}[\mathcal{W}%
_{n,s}(i)\mathcal{W}_{n,s}(j)]$ to yield%
\begin{equation*}
\left( \mathcal{D}_{1,n},\mathcal{D}_{2,n}\right) =O_{p}\left( b_{n}\ln
(p_{n})/\sqrt{\mathcal{N}_{n}}\right) =O_{p}\left( b_{n}^{3/2}\ln (p_{n})/%
\sqrt{n}\right) =o_{p}(1)\text{ }
\end{equation*}%
provided $\ln (p_{n})$ $=$ $o(\sqrt{n}/b_{n}^{3/2})$, where $b_{n}$ $=$ $%
o(n^{1/3})$ implies $\sqrt{n}/b_{n}^{3/2}$ $\rightarrow $ $\infty $.

Lastly, $\mathcal{D}_{3,n}$. Under Assumption \ref{assum_mr_boot}.b and $%
b_{n}$ $\rightarrow $ $\infty $, 
\begin{eqnarray*}
\mathcal{D}_{3,n} &=&\max_{1\leq i,j\leq p_{n}}\left\vert \frac{1}{n}%
\sum_{t_{1},t_{2}=1}^{n}\mathbb{E}\left[ w_{i,t_{1}}w_{j,t_{2}}\right] -%
\frac{1}{n}\sum_{s=1}^{\mathcal{N}_{n}}%
\sum_{t_{1},t_{2}=(s-1)b_{n}+1}^{sb_{n}}\mathbb{E}\left[
w_{i,t_{1}}w_{j,t_{2}}\right] \right\vert \\
&\leq &\frac{1}{\mathcal{N}_{n}}\sum_{s=1}^{\mathcal{N}_{n}}\frac{1}{b_{n}}%
\sum_{t_{1}=(s-1)b_{n}+1}^{sb_{n}}\sum_{t_{2}\notin
\{(s-1)b_{n}+1,sb_{n}\}}\left\vert \mathbb{E}\left[ w_{i,t_{1}}w_{j,t_{2}}%
\right] \right\vert +... \\
&=&\frac{1}{\mathcal{N}_{n}}\frac{1}{b_{n}}\sum_{t_{1}=1}^{b_{n}}\sum_{t_{2}%
\notin \{1,b_{n}\}}\left\vert \mathbb{E}\left[ w_{i,t_{1}}w_{j,t_{2}}\right]
\right\vert \\
&&+\frac{1}{\mathcal{N}_{n}}\sum_{s=2}^{\mathcal{N}_{n}}\frac{1}{b_{n}}%
\sum_{t_{1}=(s-1)b_{n}+1}^{sb_{n}}\sum_{t_{2}\notin
\{(s-1)b_{n}+1,sb_{n}\}}\left\vert \mathbb{E}\left[ w_{i,t_{1}}w_{j,t_{2}}%
\right] \right\vert +... \\
&\leq &K\frac{1}{\mathcal{N}_{n}}\frac{1}{b_{n}}\sum_{t_{1}=1}^{b_{n}}%
\sum_{t_{2}=b_{n}+1}^{n}\left( \left\vert t_{1}-t_{2}\right\vert \vee
1\right) ^{-2-\varpi } \\
&&+K\frac{1}{\mathcal{N}_{n}}\sum_{s=2}^{\mathcal{N}_{n}}\frac{1}{b_{n}}%
\sum_{t_{1}=(s-1)b_{n}+1}^{sb_{n}}\sum_{t_{2}\notin
\{(s-1)b_{n}+1,sb_{n}\}}^{-2-\varpi }\left( \left\vert
t_{1}-t_{2}\right\vert \vee 1\right) +... \\
&\leq &K\frac{1}{\mathcal{N}_{n}}\frac{1}{b_{n}}\left\{
\sum_{l=1}^{b_{n}}l\times l^{-2-\varpi }+\sum_{l=b_{n}+1}^{n-1}b_{n}\times
l^{-2-\varpi }\right\} \\
&&+K\frac{1}{\mathcal{N}_{n}}\frac{1}{b_{n}}\left\{
2\sum_{l=1}^{b_{n}}l\times l^{-2-\varpi
}+2\sum_{l=b_{n}+1}^{2b_{n}}b_{n}\times l^{-2-\varpi
}+\sum_{l=2b_{n}+1}^{n-1}b_{n}\times l^{-2-\varpi }\right\} \\
&&+K\frac{1}{\mathcal{N}_{n}}\sum_{s=3}^{\mathcal{N}_{n}}\frac{1}{b_{n}}%
\sum_{t_{1}=(s-1)b_{n}+1}^{sb_{n}}\sum_{t_{2}\notin
\{(s-1)b_{n}+1,sb_{n}\}}\left\vert t_{1}-t_{2}\right\vert ^{-2-\varpi }+...
\\
&\leq &K\frac{1}{b_{n}}\left\{ \sum_{l=1}^{b_{n}}l^{-1-\varpi }+K\frac{1}{%
b_{n}^{1+\varpi }}\right\} =O\left( 1/b_{n}\right) =o(1).
\end{eqnarray*}%
\textbf{Claim (b).} Let $\{\mathcal{\tilde{Z}}(i)\}$ be an independent copy
of the Lemma \ref{lm:mr_gauss_phys} Gaussian process $\{\mathcal{Z}(i)\}$,
independent of the asymptotic draw $\{x_{t},y_{t}\}_{t=1}^{\infty }$. We
prove below the Gaussian-to-Gaussian comparison%
\begin{equation}
r_{n}:=\sup_{c\geq 0}\left\vert \mathbb{P}\left( \max_{1\leq i\leq
p_{n}}\left\vert \mathcal{\tilde{Z}}_{n}(i)\right\vert >c\right) -\mathbb{P}%
\left( \max_{1\leq i\leq p_{n}}\left\vert \mathcal{\tilde{Z}}(i)\right\vert
>c\right) \right\vert \rightarrow 0.  \label{rn}
\end{equation}%
Combined with ($a$) this yields%
\begin{equation*}
\sup_{c\geq 0}\left\vert \mathbb{P}\left( \max_{1\leq i\leq p_{n}}\left\vert 
\sqrt{n}\mathcal{H}_{i}^{-1}\mathcal{\tilde{G}}_{i,n}\right\vert >c\right) -%
\mathbb{P}\left( \max_{1\leq i\leq p_{n}}\left\vert \mathcal{\tilde{Z}}%
(i)\right\vert >c\right) \right\vert \rightarrow 0.
\end{equation*}%
Therefore \textit{asymptotically with probability approaching one} with
respect to $\{x_{i,t},y_{t}\}_{t=1}^{\infty }$ 
\begin{equation*}
\max_{1\leq i\leq p_{n}}\left\vert \sqrt{n}\mathcal{H}_{i}^{-1}\mathcal{%
\tilde{G}}_{i,n}\right\vert \overset{d}{\rightarrow }\max_{i\in \mathbb{N}%
}\left\vert \mathcal{\tilde{Z}}(i)\right\vert .
\end{equation*}%
Thus $\max_{1\leq i\leq p_{n}}|\sqrt{n}\mathcal{H}_{i}^{-1}\mathcal{\tilde{G}%
}_{i,n}|$ $\Rightarrow ^{p}$ $\max_{i\in \mathbb{N}}|\mathcal{\tilde{Z}}(i)|$
by definition of $\Rightarrow ^{p}$ as required 
\citep[cf.][Section
3]{GineZinn1990}. The proof for general weights $\{\mathcal{W}_{i,n}\}$
satisfying $\max_{1\leq i\leq p_{n}}|\mathcal{W}_{i,n}$ $-$ $\mathcal{W}%
_{i}| $ $=$ $o_{p}(1/\ln (p_{n}))$ mimics arguments in the proof of Theorem %
\ref{thm:mr_gauss_phys}.

We now prove $r_{n}$ $\rightarrow $ $0$ in (\ref{rn}). By Lemma 3.1 in \cite%
{Chernozhukov_etal2013}, as above and in view of non-degeneracy Assumption %
\ref{assum:marg-reg}.c, it suffices to have%
\begin{equation*}
\tilde{\Delta}_{n}:=\max_{1\leq i,j\leq p_{n}}\left\vert \frac{1}{n}%
\sum_{s,t=1}^{n}\mathbb{E}\left[ w_{i,s}w_{j,t}\right] -\lim_{n\rightarrow
\infty }\frac{1}{n}\sum_{s,t=1}^{n}\mathbb{E}\left[ w_{i,s}w_{j,t}\right]
\right\vert =O\left( 1/n^{\iota }\right) .
\end{equation*}%
The required bound holds by Assumption \ref{assum_mr_boot}.c. This completes
the proof.\medskip \newline
\textbf{Assumption \ref{assum_mr_boot}.c}. We prove $\max_{i,j,n}|1/n%
\sum_{s,t=1}^{n}\mathbb{E}[w_{i,s}w_{j,t}]|$ exists under Assumption \ref%
{assum_mr_boot}.b, and then refine Assumption \ref{assum_mr_boot}.b as a
sufficient condition for Assumption \ref{assum_mr_boot}.c.

Let $\{\mathcal{E}_{n,l}(i,j)\}_{l=1}^{n^{2}}$ denote the stacked $\{\mathbb{%
E}[w_{i,s}w_{j,t}]/n\}_{s,t=1}^{n}$ with correspondence $l$ $=$ $(s$ $-$ $%
1)n $ $+$ $t$. Thus when $j$ $=$ $m$ we have $s$ $=$ $\lceil \frac{m}{n}%
\rceil \vee 1$ and $t$ $=$ $m$ $-$ $n\lfloor \frac{m-1}{n}\rfloor $\ with
floor $\lfloor \cdot \rfloor $ and ceiling $\lceil \cdot \rceil $. Set 
\begin{equation*}
g_{n}(m):=\left\{ \left\lceil \frac{m}{n}\right\rceil \vee 1\right\} \wedge
\left\{ m-n\left\lfloor \frac{m-1}{n}\right\rfloor \right\} \text{, }m\in
\left\{ 1,...,n^{2}\right\} .
\end{equation*}%
Then under Assumption \ref{assum_mr_boot}.b, 
\begin{eqnarray*}
\max_{i,j}\left\vert \sum_{l=m}^{n^{2}}\mathcal{E}_{n,l}(i,j)\right\vert
&\leq &K\frac{1}{n}\sum_{s=\lceil \frac{m}{n}\rceil \vee 1,t=m-n\lfloor 
\frac{m-1}{n}\rfloor }^{n}\left\vert s-t\right\vert ^{-2-\omega } \\
&=&K\sum_{l=1}^{n-g(n)-1}\left( 1-\frac{g_{n}(m)+l}{n}\right) l^{-2-\omega
}\leq K\sum_{l=1}^{n-g_{n}(m)-1}l^{-2-\omega }.
\end{eqnarray*}%
Now, since $\sum_{l=1}^{\infty }l^{-2-\omega }$ $<$ $\infty $ and $g_{n}(m)$ 
$\nearrow $ $n$ as $m$ $\nearrow $ $n^{2}$, it follows $\forall \varepsilon $
$>$ $0$ there exists $\mathcal{N}_{\varepsilon }$ such that $%
\max_{i,j}|\sum_{l=m}^{n^{2}}\mathcal{E}_{n,l}(i,j)|$ $<$ $\varepsilon $, $%
\forall n$ $\geq $ $m$ $\geq $ $\mathcal{N}_{\varepsilon }$. Hence $%
\max_{i,j}|\sum_{l=m}^{n^{2}}\mathcal{E}_{n,l}(i,j)|$ is Cauchy and
therefore converges. Thus $\max_{i,j,n}|1/n\sum_{s,t=1}^{n}\mathbb{E}%
[w_{i,s}w_{j,t}]|$ exists.

Finally, refine Assumption \ref{assum_mr_boot}.b to the identity $\mathbb{E}%
[w_{i,s}w_{j,t}]$ $=$ $\mathcal{C}(i,j)|s-t|^{-2-\omega }$ for some $%
\mathcal{C}(i,j)$ $\in $ $\mathbb{R}$ . Then with $c$ $:=$ $%
\sum_{l=0}^{\infty }\left( l\vee 1\right) ^{-2-\omega }$ $<$ $\infty $, 
\begin{equation*}
\lim_{n\rightarrow \infty }\frac{1}{n}\sum_{s,t=1}^{n}\mathbb{E}\left[
w_{i,s}w_{j,t}\right] =\mathcal{C}(i,j)\sum_{l=0}^{\infty }\left( l\vee
1\right) ^{-2-\omega }=c\times \mathcal{C}(i,j).
\end{equation*}%
Thus Assumption \ref{assum_mr_boot}.c is satisfied: 
\begin{eqnarray*}
&&\max_{1\leq i,j\leq p_{n}}\left\vert \frac{1}{n}\sum_{s,t=1}^{n}\mathbb{E}%
\left[ w_{i,s}w_{j,t}\right] -\lim_{n\rightarrow \infty }\frac{1}{n}%
\sum_{s,t=1}^{n}\mathbb{E}\left[ w_{i,s}w_{j,t}\right] \right\vert \\
&&\text{ \ \ \ \ \ \ \ \ \ \ \ }=\max_{1\leq i,j\leq p_{n}}\left\vert 
\mathcal{C}(i,j)\sum_{l=0}^{n-1}\left( 1-\frac{l}{n}\right) \left( l\vee
1\right) ^{-2-\omega }-c\times \mathcal{C}(i,j)\right\vert \\
&&\text{ \ \ \ \ \ \ \ \ \ \ \ }\leq \max_{1\leq i,j\leq p_{n}}\left\vert 
\mathcal{C}(i,j)\right\vert \left\vert \sum_{l=0}^{n-1}\left( 1-\frac{l}{n}%
\right) \left( l\vee 1\right) ^{-2-\omega }-c\right\vert \\
&&\text{ \ \ \ \ \ \ \ }=\max_{1\leq i,j\leq p_{n}}\left\vert \mathcal{C}%
(i,j)\right\vert \left\vert \sum_{l=0}^{n-1}\left( 1-\frac{l}{n}\right)
\left( l\vee 1\right) ^{-2-\omega }-\sum_{l=0}^{\infty }\left( l\vee
1\right) ^{-2-\omega }\right\vert \\
&&\text{ \ \ \ \ \ \ \ }\leq \max_{1\leq i,j\leq p_{n}}\left\vert \mathcal{C}%
(i,j)\right\vert \sum_{l=n}^{\infty }l^{-2-\omega }+\max_{1\leq i,j\leq
p_{n}}\left\vert \mathcal{C}(i,j)\right\vert \frac{1}{n}\left\vert
\sum_{l=1}^{n-1}l^{-1-\omega }\right\vert \\
&&\text{ \ \ \ \ \ \ \ }=\max_{1\leq i,j\leq p_{n}}\left\vert \mathcal{C}%
(i,j)\right\vert n^{-2-\omega }\sum_{l=n}^{\infty }\left( \frac{l}{n}\right)
^{-2-\omega }+O\left( \frac{1}{n}\right) =O\left( \frac{1}{n^{2+\omega }}%
\right) +O\left( \frac{1}{n}\right) .\text{ }\blacksquare
\end{eqnarray*}%
\textbf{Proof of Theorem \ref{thm:mr:boot}.} Let $\{\mathcal{\tilde{Z}}(i)$ $%
:$ $1$ $\leq $ $i$ $\leq $ $p_{n}\}$ be an independent copy of the Lemma \ref%
{lm:mr_gauss_phys} Gaussian sequence $\{\mathcal{Z}(i)$ $:$ $1$ $\leq $ $i$ $%
\leq $ $p_{n}\}$ , $\mathcal{Z}(i)$ $\sim $ $\mathcal{N}(0,\sigma ^{2}(i))$,
independent of the asymptotic draw $\{x_{t},y_{t}\}_{t=1}^{\infty }$. Lemmas %
\ref{lm:mr_boot_expand} and \ref{lm:mr_boot_gauss}, and arguments in the
proof of Theorem \ref{thm:mr_gauss_phys}, yield $\mathcal{\tilde{T}}_{n}$ $%
\Rightarrow ^{p}$ $\max_{i\in \mathbb{N}}|\mathcal{\tilde{Z}}(i)|$. The
proof now follows from standard arguments on multiplier bootstrapped
p-values under dependence: see \citet[proof of Theorem 4.1]{HillLi2025} and %
\citet[proof of Theorem 3.4]{Hill_2025_maxtest}, cf. 
\citet[p.
427]{Hansen1996}. $\blacksquare $\medskip \newline
\textbf{Proof of Theorem \ref{thm:mr:boot_local}.} The proof mimics
arguments in \citet[proof of Theorem 3.2]{Hill_2025_maxtest}. $\blacksquare $%
.

\setstretch{1} 
\bibliographystyle{apa}
\bibliography{refs_sig_pred}

\begin{thebibliography}{}

\bibitem[\protect\astroncite{Andrews}{1988}]{Andrews1988}
Andrews, D. W.~K. (1988).
\newblock Laws of large numbers for dependent non-identically distributed
  random variables.
\newblock {\em Economet. Theory}, 4:458--467.

\bibitem[\protect\astroncite{Andrews}{1999}]{Andrews1999}
Andrews, D. W.~K. (1999).
\newblock Estimation when a parameter is on a boundary.
\newblock {\em Econometrica}, 67:1341--1383.

\bibitem[\protect\astroncite{Andrews and Cheng}{2012}]{Andrews_Cheng_2012}
Andrews, D. W.~K. and Cheng, X. (2012).
\newblock Estimation and inference with weak, semi-strong and strong
  identification.
\newblock {\em Econometrica}, 80:2153–2211.

\bibitem[\protect\astroncite{Andrews and Cheng}{2013}]{AndrewsCheng2013}
Andrews, D. W.~K. and Cheng, X. (2013).
\newblock Maximum likelihood estimation and uniform inference with sporadic
  identification failure.
\newblock {\em J. Econometrics}, 173:36--56.

\bibitem[\protect\astroncite{Andrews and Cheng}{2014}]{AndrewsCheng2014}
Andrews, D. W.~K. and Cheng, X. (2014).
\newblock Gmm estimation and uniform subvector inference with possible
  identification failure.
\newblock {\em Economet. Theory}, 30:287--333.

\bibitem[\protect\astroncite{Andrews and
  Ploberger}{1994}]{AndrewsPloberger1994}
Andrews, D. W.~K. and Ploberger, W. (1994).
\newblock Optimal tests when a nuisance parameter is present only under the
  alternative.
\newblock {\em Econometrica}, 62:1383--1414.

\bibitem[\protect\astroncite{Angrist and Hahn}{2004}]{AngristHahn2004}
Angrist, J. and Hahn, J. (2004).
\newblock When to control for covariates? panel asymptotics for estimates of
  treatment effects.
\newblock {\em Review Econom. Statist.}, 86:58--72.

\bibitem[\protect\astroncite{Basrak et~al.}{2002}]{Basrak_etal2002}
Basrak, B., Davis, R.~A., and Mikosch, T. (2002).
\newblock Regular variation of garch processes.
\newblock {\em Stoch. Process. Appl.}, 99:95--115.

\bibitem[\protect\astroncite{Belloni
  et~al.}{2014}]{BelloniChernozhukovHansen2014}
Belloni, A., Chernozhukov, V., and Hansen, C. (2014).
\newblock High-dimensional methods and inference on structural and treatment
  effects.
\newblock {\em J. Econom. Perspect.}, 28:29--50.

\bibitem[\protect\astroncite{Benjamini and
  Hochberg}{1995}]{BenjaminiHochberg1995}
Benjamini, Y. and Hochberg, Y. (1995).
\newblock Controlling the false discovery rate: A practical and powerful
  approach to multiple testing.
\newblock {\em J. R. Stat. Soc. Ser. B}, 57:289–300.

\bibitem[\protect\astroncite{Berk et~al.}{2013}]{Berk_etal2013}
Berk, R., Brown, L.~D., Buja, A., Zhang, K., and Zhao, L. (2013).
\newblock Valid post-selection inference.
\newblock {\em Ann. Statist.}, 41:802--837.

\bibitem[\protect\astroncite{Bollerslev}{1986}]{Bollerslev1986}
Bollerslev, T. (1986).
\newblock Generalized autoregressive conditional heteroskedasticity.
\newblock {\em Journal of Econometrics}, 31:307--327.

\bibitem[\protect\astroncite{Bose}{1988}]{Bose1988}
Bose, A. (1988).
\newblock Edgewordth correction by bootstrap in autoregressions.
\newblock {\em Ann. Statist.}, 16:1709--1722.

\bibitem[\protect\astroncite{Bougerol and Picard}{1992}]{BougerolPicard1992}
Bougerol, P. and Picard, N. (1992).
\newblock Strict stationarity of generalized autoregressive processes.
\newblock {\em Annals of Probability}, 20:1714--1730.

\bibitem[\protect\astroncite{Buhlmann and van~de
  Geer}{2011}]{BuhlmannVanDeGeer2011}
Buhlmann, P. and van~de Geer, S. (2011).
\newblock {\em Statistics for High-Dimensional Data}.
\newblock Springer, Berlin.

\bibitem[\protect\astroncite{Cai and Jiang}{2011}]{CaiJiang2011}
Cai, T.~T. and Jiang, T. (2011).
\newblock Limiting laws of coherence of random matrices with applications to
  testing covariance structure and construction of compressed sensing matrices.
\newblock {\em Ann. Statist.}, 39:1496--1525.

\bibitem[\protect\astroncite{Cai and Jiang}{2012}]{CaiJiang2012}
Cai, T.~T. and Jiang, T. (2012).
\newblock Phase transition in limiting distributions of coherence of
  high-dimensional random matrices.
\newblock {\em J. Multivariate Anal.}, 107:24–39.

\bibitem[\protect\astroncite{Chang et~al.}{2024}]{ChangChenWu2024}
Chang, J., Chen, X., and Wu, M. (2024).
\newblock Central limit theorems for high dimensional dependent data.
\newblock {\em Bernoulli}, 30:712--742.

\bibitem[\protect\astroncite{Chang et~al.}{2023}]{ChangJiangShao2023}
Chang, J., Jiang, Q., and Shao, X. (2023).
\newblock Testing the martingale difference hypothesis in high dimension.
\newblock {\em J. Econometrics}, 235:972--1000.

\bibitem[\protect\astroncite{Chang et~al.}{2013}]{ChangTangWu2013}
Chang, J., Tang, C., and Wu, Y. (2013).
\newblock Marginal empirical likelihood and sure independence feature
  screening.
\newblock {\em Ann. Statist.}, 41:2123--2148.

\bibitem[\protect\astroncite{Chen and Kato}{2019}]{ChenKato2019}
Chen, X. and Kato, K. (2019).
\newblock Randomized incomplete u-statistics in high dimensions.
\newblock {\em Ann. Statist.}, 47:3127--3156.

\bibitem[\protect\astroncite{Cheng}{2015}]{Cheng2015}
Cheng, X. (2015).
\newblock Robust inference in nonlinear models with mixed indentification
  strength.
\newblock {\em J. Econometrics}, 189:207--228.

\bibitem[\protect\astroncite{Chernozhukov et~al.}{2013}]{Chernozhukov_etal2013}
Chernozhukov, V., Chetverikov, D., and Kato, K. (2013).
\newblock Gaussian approximations and multiplier bootstrap for maxima of sums
  of high-dimensional random vectors.
\newblock {\em Ann. Statist.}, 41:2786--2819.

\bibitem[\protect\astroncite{Chernozhukov et~al.}{2015}]{Chernozhukov_etal2015}
Chernozhukov, V., Chetverikov, D., and Kato, K. (2015).
\newblock Comparison and anti-concentration bounds for maxima of {G}aussian
  random vectors.
\newblock {\em Probab. Theory Related Fields}, 162:47--70.

\bibitem[\protect\astroncite{Correia}{2016}]{Correia2016}
Correia, S. (2016).
\newblock A feasible estimator for linear models with multi-way fixed effects.
\newblock Unpublished manuscipt (scorreia.com/research/hdfe.pdf).

\bibitem[\protect\astroncite{Dudoit et~al.}{2003}]{Dudoit_etal2003}
Dudoit, S., Shaffer, J.~P., and Boldrick, J.~C. (2003).
\newblock Multiple hypothesis testing in microarray experiments.
\newblock {\em Statist. Sci.}, 18:71--103.

\bibitem[\protect\astroncite{Efron}{2006}]{Efron2006}
Efron, B. (2006).
\newblock Large-scale simultaneous hypothesis testing: The choice of a null
  hypothesis.
\newblock {\em J. Amer. Statist. Assoc.}, 99:96–104.

\bibitem[\protect\astroncite{Fan and Li}{2006}]{FanLi2006}
Fan, J. and Li, R. (2006).
\newblock Statistical challenges with high dimensionality: Feature selection in
  knowledge discovery.
\newblock In Sanz-Sole, M., Soria, J., Varona, J.~L., and Verdera, J., editors,
  {\em Proc. Internat. Congress of Mathematicians}, volume III, pages 595--622,
  Zurich. European Mathematical Society.

\bibitem[\protect\astroncite{Fan and Lv}{2008}]{FanLv2008}
Fan, J. and Lv, J. (2008).
\newblock Sure independence screening for ultrahigh dimensional feature space.
\newblock {\em J. R. Stat. Soc. Ser. B}, 70:849--911.

\bibitem[\protect\astroncite{Fan et~al.}{2011}]{FanLvQi2011}
Fan, J., Lv, J., and Qi, .~L. (2011).
\newblock Sparse high-dimensional models in economics.
\newblock {\em Annual Econ. Rev.}, 3:291--317.

\bibitem[\protect\astroncite{Gallant and White}{1988}]{GallantWhite1988}
Gallant, A.~R. and White, H. (1988).
\newblock {\em A Unified Theory of Estimation and Inference for Nonlinear
  Dynamic Models}.
\newblock Basil Blackwell, New York.

\bibitem[\protect\astroncite{Genovese et~al.}{3023}]{Genovese_etal_2012}
Genovese, C.~R., Jin, J., Wasserman, L., and Yao, Z. (3023).
\newblock A comparison of the lasso and marginal regression.
\newblock {\em J. Mach. Learning Research}, 13:2107--2143.

\bibitem[\protect\astroncite{Giannone et~al.}{2021}]{Giannone_etal2021}
Giannone, D., Lenza, M., and Primiceri, G.~E. (2021).
\newblock Economic predictions with big data: The illusion of sparsity.
\newblock Technical Report 2542, European Central Bank.

\bibitem[\protect\astroncite{Gine and Zinn}{1990}]{GineZinn1990}
Gine, E. and Zinn, J. (1990).
\newblock Bootstrapping general empirical measures.
\newblock {\em Ann. Probab.}, 18:851--869.

\bibitem[\protect\astroncite{Hansen}{1996}]{Hansen1996}
Hansen, B.~E. (1996).
\newblock Inference when a nuisance parameter is not identified under the null
  hypothesis.
\newblock {\em Econometrica}, 64:413--430.

\bibitem[\protect\astroncite{Hill}{2021}]{Hill2021}
Hill, J.~B. (2021).
\newblock Weak identification robust wild bootstrap applied to a consistent
  model specification test.
\newblock {\em Economet. Theory}, 37:409--463.

\bibitem[\protect\astroncite{Hill}{2025a}]{Hill_2025_mixg}
Hill, J.~B. (2025a).
\newblock Mixingale and physical dependence equality with applications.
\newblock {\em Statist. Probab. Letters}, 221:110380.

\bibitem[\protect\astroncite{Hill}{2025b}]{Hill_2025_maxtest}
Hill, J.~B. (2025b).
\newblock Testing many zero restrictions in a high dimensional linear
  regression setting.
\newblock {\em J. Bus. Econom. Statist.}, 43:55--67.

\bibitem[\protect\astroncite{Hill}{2026a}]{sm_sig_pred}
Hill, J.~B. (2026a).
\newblock Supplemental material for ``a high dimensional wild bootstrap
  max-test for detecting the presence of significant predictors".
\newblock Dept. of Economics, University of North Carolina - Chapel Hill.

\bibitem[\protect\astroncite{Hill}{2026b}]{sm_max_LLN}
Hill, J.~B. (2026b).
\newblock Supplemental material for ``max-laws of large numbers for high
  dimensional arrays with applications''.
\newblock Dept. of Economics, University of North Carolina - Chapel Hill.

\bibitem[\protect\astroncite{Hill and Li}{2025}]{HillLi2025}
Hill, J.~B. and Li, T. (2025).
\newblock A bootstrapped test of covariance stationarity based on orthonormal
  transformations.
\newblock {\em Bernoulli}, 31:1527--1551.

\bibitem[\protect\astroncite{Hill and Motegi}{2020}]{HillMotegi2020}
Hill, J.~B. and Motegi, K. (2020).
\newblock A max-correlation white noise for weakly dependent time series.
\newblock {\em Economet. Theory}, 36:907--960.

\bibitem[\protect\astroncite{Huang et~al.}{2019}]{Huang_etal2019}
Huang, T.-J., McKeague, I.~W., and Qian, M. (2019).
\newblock Marginal screening for high-dimensional predictors of survival
  outcomes.
\newblock {\em Stat. Sin.}, 29:2105--2139.

\bibitem[\protect\astroncite{Jiang}{2004}]{Jiang2004}
Jiang, T. (2004).
\newblock The asymptotic distributions of the largest entries of sample
  correlation matrices.
\newblock {\em Ann. Appl. Probab.}, 14:865–880.

\bibitem[\protect\astroncite{Kesten}{1973}]{Kesten1973}
Kesten, H. (1973).
\newblock Random difference equations and renewal theory for products of random
  matrices.
\newblock {\em Acta Mathematica}, 131:207–248.

\bibitem[\protect\astroncite{Koles{\'a}r et~al.}{2024}]{Kolesar_etal_2024}
Koles{\'a}r, M., M{\"u}ller, U.~K., and Roeslgaard, S.~T. (2024).
\newblock The fragility of sparsity.
\newblock Dept. of Economics, Princeton University.

\bibitem[\protect\astroncite{Laber and Murphy}{2011}]{LaberMurphy2011}
Laber, E. and Murphy, S.~A. (2011).
\newblock Adaptive confidence intervals for the test error in classification.
\newblock {\em J. Amer. Statist. Assoc.}, 106:904--913.

\bibitem[\protect\astroncite{Leeb and P\"{o}tscher}{2006}]{LeebPotscher2006}
Leeb, H. and P\"{o}tscher, B.~M. (2006).
\newblock Can one estimate the conditional distribution of post-model-selection
  estimators.
\newblock {\em Ann. Statist.}, 34:2554--2591.

\bibitem[\protect\astroncite{Li et~al.}{2009}]{LiLiuRosalsky2009}
Li, D., Liu, W., and Rosalsky, A. (2009).
\newblock Necessary and sufficient conditions for the asymptotic distribution
  of the largest entry of a sample correlation matrix.
\newblock {\em Probab. Theory Related Fields}, 148:5--35.

\bibitem[\protect\astroncite{Liu}{1988}]{Liu1988}
Liu, R.~Y. (1988).
\newblock Bootstrap procedures under some non-i.i.d. models.
\newblock {\em Ann. Statist.}, 16:1696--1708.

\bibitem[\protect\astroncite{Liu et~al.}{2008}]{LiuLinShao2008}
Liu, W., Lin, Z., and Shao, Q. (2008).
\newblock The asymptotic distribution and berry–esseen bound of a new test
  for independence in high dimension with an application to stochastic
  optimization.
\newblock {\em Ann. Appl. Probab.}, 18:2337--2366.

\bibitem[\protect\astroncite{Ljung and Box}{1978}]{LjungBox1978}
Ljung, G.~M. and Box, G. E.~P. (1978).
\newblock On a measure of lack of fit in time series models.
\newblock {\em Biometrika}, 65:297--303.

\bibitem[\protect\astroncite{McCloskey}{2017}]{McCloskey2017}
McCloskey, A. (2017).
\newblock Bonferroni-based size-correction for nonstandard testing problems.
\newblock {\em J. Econometrics}, 200:17--35.

\bibitem[\protect\astroncite{McCloskey}{2020}]{McCloskey2020}
McCloskey, A. (2020).
\newblock Asymptotically uniform tests after consistent model selection in the
  linear regression model.
\newblock {\em J. Bus. Econom. Statist.}, 38:810--825.

\bibitem[\protect\astroncite{McCloskey}{2024}]{McCloskey2024}
McCloskey, A. (2024).
\newblock Hybrid confidence intervals for informative uniform asymptotic
  inference after model selection.
\newblock {\em Biometrika}, 111:109--127.

\bibitem[\protect\astroncite{McKeague and Qian}{2015}]{McKeague_Qian_2015}
McKeague, I. and Qian, M. (2015).
\newblock An adaptive resampling test for detecting the presence of significant
  predictors.
\newblock {\em J. Amer. Statist. Assoc.}, 110:1422--1433.

\bibitem[\protect\astroncite{McKeague and Zhang}{2022}]{McKeagueZhang2022}
McKeague, I. and Zhang, I. (2022).
\newblock Significance testing for canonical correlation analysis in high
  dimensions.
\newblock {\em Biometrika}, 109:1076--1083.

\bibitem[\protect\astroncite{McLeish}{1975}]{McLeish1975}
McLeish, D.~L. (1975).
\newblock A maximal inequality and dependent strong laws.
\newblock {\em Ann. Probab.}, 3:829--839.

\bibitem[\protect\astroncite{Nemirovski}{2000}]{Nemirovski2000}
Nemirovski, A.~S. (2000).
\newblock Topics in nonparametric statistics.
\newblock In {\em Letures on Probability Theory and Statistics}. Springer,
  Berlin.
\newblock Lectures Notes on Mathematics, vol. 1738.

\bibitem[\protect\astroncite{Rio}{2017}]{Rio2017}
Rio, E. (2017).
\newblock {\em Asymptotic Theory of Weakly Dependent Random Processes}.
\newblock Springer.

\bibitem[\protect\astroncite{Sawa}{1978}]{Sawa1978}
Sawa, T. (1978).
\newblock Iinformation criteria for discriminating among alternative regression
  models.
\newblock {\em Econometrica}, 46:1273--1291.

\bibitem[\protect\astroncite{Shao and Zhou}{2014}]{ShaoZhou2014}
Shao, Q.-M. and Zhou, W.-X. (2014).
\newblock Necessary and sufficient conditions for the asymptotic distributions
  of coherence of ultra-high dimensional random matrices.
\newblock {\em Ann. Probab.}, 42:623--648.

\bibitem[\protect\astroncite{Shao}{2011}]{Shao2011_JoE}
Shao, X. (2011).
\newblock A bootstrap-assisted spectral test of white noise under unknown
  dependence.
\newblock {\em J. Econometrics}, 162:213--224.

\bibitem[\protect\astroncite{Tang et~al.}{2018}]{TangWangBarut2018}
Tang, Y., Wang, H.~J., , and Barut, E. (2018).
\newblock Testing for the presence of significant covariates through
  conditional marginal regression.
\newblock {\em Biometrika}, 105:57--71.

\bibitem[\protect\astroncite{Vershynin}{2018}]{Vershynin2018}
Vershynin, R. (2018).
\newblock {\em High-Dimensional Probability}.
\newblock Cambridge University Press, Cambridge, UK.

\bibitem[\protect\astroncite{White}{1982}]{White1982}
White, H. (1982).
\newblock Maximum likelihood estimation of misspecified models.
\newblock {\em Econometrica}, 50:1--25.

\bibitem[\protect\astroncite{Wu}{2005}]{Wu2005}
Wu, W.~B. (2005).
\newblock Nonlinear system theory: Another look at dependence.
\newblock {\em Proc. Natl. Acad. Sci.}, 102:14150--14154.

\bibitem[\protect\astroncite{Wu and Min}{2005}]{WuLin2005}
Wu, W.~B. and Min, M. (2005).
\newblock On linear processes with dependent innovations.
\newblock {\em Stochastic Process. Appl.}, 115:939--958.

\bibitem[\protect\astroncite{Wu and Wu}{2016}]{WuWu2016}
Wu, W.~B. and Wu, Y.~N. (2016).
\newblock Performance bounds for parameter estimates of high-dimensional linear
  models with correlated errors.
\newblock {\em Electron. J. Statist.}, 10:352--379.

\bibitem[\protect\astroncite{Zhang and Laber}{2015}]{ZhangLaber2015}
Zhang, Y. and Laber, E.~B. (2015).
\newblock Comment: An adaptive resampling test fordetecting the presence of
  signicant predictors.
\newblock {\em J. Amer. Statist. Assoc.}, 110:1451--1454.

\end{thebibliography}
\singlespacing\setstretch{1} \clearpage

\clearpage

\end{document}